\documentclass[11pt,reqno]{amsart}

\usepackage{amsfonts,amsmath,amsthm}
\usepackage{amssymb,epsfig}
\usepackage{enumerate}

\usepackage{color} 
\definecolor{vert}{rgb}{0,0.6,0}

\usepackage[text={425pt,650pt},centering]{geometry}
\usepackage{graphicx}
\usepackage{epsfig}
\usepackage{tikz,esvect}
\usepackage{tikz-3dplot}
\usetikzlibrary{3d,calc,shapes}
\usetikzlibrary{shapes.geometric}
\usepackage{adjustbox}
\usepackage{subfig}

\usepackage{caption}
\usepackage{color} 
\definecolor{vert}{rgb}{0,0.6,0}

\numberwithin{figure}{section}

\theoremstyle{plain}
\newtheorem{thm}{Theorem}[section]

\newtheorem{defn}{Definition}

\newtheorem{lem}[thm]{Lemma}
\newtheorem{cor}[thm]{Corollary}
\newtheorem{prop}[thm]{Proposition}
\theoremstyle{remark}
\newtheorem{rem}{\bf{Remark}}
\newtheorem{example}{\bf{Example}}
\numberwithin{equation}{section}

\newcommand{\N}{\mathbb{N}}

\newcommand{\R}{\mathbb{R}}

\newcommand{\T}{\mathbb{T}}
\newcommand{\Z}{\mathbb{Z}}

\newcommand{\D}{\mathrm{D}}

\newcommand{\ol}{\overline}

\DeclareMathOperator*{\esssup}{ess\,sup}

\usepackage{tikz}
\usepackage{pgfplots}
\pgfplotsset{compat=1.18}

\usepackage[unicode=true]{hyperref}
\hypersetup{
     colorlinks,
     linkcolor={black!10!blue},
     linkbordercolor = {black!100!blue},
     citecolor={red}
}
\usepackage[shortlabels]{enumitem}
\makeatletter
\def\namedlabel#1#2{\begingroup
    #2%
    \def\@currentlabel{#2}%
    \phantomsection\label{#1}\endgroup
}
\makeatother

\usepackage{tikz}
\usetikzlibrary{arrows.meta}

\begin{document}

\title[Effective Hamiltonians]
{Effective Hamiltonians in Perforated Domains: Inf-Max formula and Stability}

\author[Y. Han, S. Tu] 
{Yuxi Han and Son N. T. Tu}

\thanks{
}

\address[Y. Han]
{Department of Mathematics, Purdue University, West Lafayette, IN}
\email{han891@purdue.edu}

\address[S. Tu]
{Department of Mathematics, Baylor University, Waco, TX
}
\email{son\_tu@baylor.edu}

\date{\today}
\keywords{Cell problems; 
first-order convex Hamilton--Jacobi equations; periodic homogenization; perforated domains; 
viscosity solutions}

\subjclass[2020]{
35F21, 
35B27, 
35B30, 
35B35, 
49L25 
}

\begin{abstract}
We study the dependence of the effective Hamiltonian
$\overline H_\Omega(p)$ on the geometry of periodic perforated
domains for convex Hamilton--Jacobi equations with
state-constraint boundary conditions. We establish an inf--max
representation over smooth test functions for domains with
$C^1$ boundary. Using this representation, we derive a
quantitative stability estimate in terms of the Hausdorff
distance between domains and the reach of one boundary.
This estimate yields convergence for non-nested domain approximations whose Hausdorff error is negligible relative to their boundary reach.
Examples demonstrate the sharpness of the continuity
modulus and show that Hausdorff convergence alone can fail
to preserve the effective Hamiltonian, even for smooth
domains and the quadratic Hamiltonian.
\end{abstract}

\maketitle


\section{Introduction}
In this paper, we study the stability of the effective Hamiltonian (or ergodic constant) under domain perturbations, both qualitatively and quantitatively, with particular emphasis on the role of boundary geometry.

Let $H\in C(\R^n\times\R^n)$ and $\Omega\subset\R^n$ be an open, connected, $\Z^n$-periodic domain. For each $p\in\R^n$, the effective Hamiltonian $\overline H_\Omega(p)$ is the unique constant for which the following state-constraint cell problem admits a $\Z^n$-periodic viscosity solution (see \cite{han_quantitative_2024}):
\begin{equation}\label{eqn:cell}
    \left\{\begin{aligned}
    H\left(y, p+Dv(y)\right) &\leq \overline{H}_\Omega(p) \quad \text{in }  \Omega, \\
    H\left(y, p+Dv(y)\right) &\geq \overline{H}_\Omega(p) \quad \text{on }  \overline{\Omega}.
    \end{aligned}
    \right.
\end{equation} 
The effective Hamiltonian arises naturally in homogenization \cite{lions_homogenization_1986,evans_periodic_1992,han_quantitative_2024} and characterizes the large-time asymptotic behavior of Hamilton--Jacobi equations, hence the alternative name ergodic constant \cite{ishii_asymptotic_2008, mitake_asymptotic_2008}.
For quadratic Hamiltonians, it is also related to principal eigenvalues of elliptic operators through the Hopf--Cole transform \cite{lasry_nonlinear_1989}, as discussed below.

State-constraint boundary conditions arise naturally in optimal control problems \cite{Soner1986OptimalI,Capuzzo-Dolcetta1990HamiltonJacobiConstraints}, where admissible trajectories must remain inside the domain.  
The dependence of the effective Hamiltonian (without boundary) on the momentum $p$ has been extensively studied \cite{concordel_periodic_1997,EvansGomesARMAPart12001, le_dynamical_2017, evans_adjoint_2010,iturriaga_stochastic_2005, tu_regularity_2024}. We focus instead on its dependence on the domain $\Omega$.  
A useful characterization of $\overline{H}_\Omega(p)$ is the inf--sup formula \cite[Lemma~A.2]{han_quantitative_2024}:
\begin{equation}\label{eq:infsupformula}
    \overline H_\Omega(p) = 
    \inf_{\varphi\in\mathrm{Lip}(\T^n)} \operatorname*{ess\,sup}_{y\in\overline\Omega} H(y,p+D\varphi(y)).
\end{equation}
However, establishing an inf--max formula over smooth test functions and understanding domain dependence remain challenging for domains with limited boundary regularity. 

We first establish an inf--max representation of the effective Hamiltonian for domains with $C^1$ boundaries over smooth functions, resolving a question raised in \cite[Remark~8]{han_quantitative_2024}. Building on this representation, we quantify its stability under domain perturbations, with an explicit estimate involving the Hausdorff distance and the reach of the boundary. 
Intuitively, Hausdorff convergence alone is not enough to ensure stability, and additional geometric control is needed. We show that this control can be provided in terms of the ratio between the Hausdorff distance and the reach of the boundary. These questions are relevant to quantitative approximation and numerical applications in the first-order setting.

While the inf--max formula is well known in the setting without boundaries, extending it to $C^1$ domains presents new difficulties, as standard smooth deformation techniques are not directly applicable. In particular, standard convolution techniques require extending viscosity solutions beyond the domain or restricting regularization to a smaller domain, while preserving the state-constraint inequality. We overcome these difficulties through smooth domain approximations and quantitative estimates involving the reach of the boundary. A key feature of our result is the explicit dependence of the stability estimate on both the Hausdorff distance and the reach. 
We also construct examples showing that geometric control is necessary and that the order of the estimate is optimal in a particular situation.

\subsection{Settings and main results}
The Hamiltonian $H\in C(\R^n\times \R^n)$ is assumed to satisfy the conditions listed below.
\begin{itemize}
    \item[\namedlabel{itm:A1}{$(\mathcal{A}_1)$}] For $p\in \R^n$, $y\mapsto H(y,p)$ is $\Z^n$-periodic.
    \item[\namedlabel{itm:A2}{$(\mathcal{A}_2)$}] $H$ is coercive in $p$, i.e., $\displaystyle \lim_{|p|\to\infty}\inf_{y\in\R^n}H(y,p)=+\infty$, and convex in $p$ for every $y\in\R^n$.
\end{itemize}
We impose the following assumptions on the domains under consideration.
\begin{itemize}
    \item[\namedlabel{itm:D0}{$(\mathcal{D}_0)$}] There exists a universal pair $(r,h)\in (0,\infty)\times (0,\infty)$ and $\gamma \in \mathrm{BUC}(\overline{\Omega};\R^n)$ such that $B(x+t\gamma(x), rt)\subset \Omega$ for all $x\in \overline{\Omega}, t\in (0,h]$. 
    \item[\namedlabel{itm:D1}{$(\mathcal{D}_1)$}] $\Omega\subset\mathbb R^n$ is an open, connected set with $C^1$ boundary and is $\mathbb Z^n$-periodic, meaning that $\Omega+\mathbb Z^n=\Omega$. Thus, $\Omega$ can be naturally identified with an open subset of $\mathbb T^n$.
\end{itemize}
Note that for $\Omega$ to be connected, we need $n\geq 2$ (see \cite[Figures~1.1--1.3]{han_quantitative_2024}). 
Also, \ref{itm:D1} implies \ref{itm:D0}, the latter being introduced in \cite{Soner1986OptimalI} to ensure well-posedness of the state-constraint problem.
Under \ref{itm:D0} and \ref{itm:A1}--\ref{itm:A2}, the effective Hamiltonian in \eqref{eqn:cell} is well-defined \cite{han_quantitative_2024}. Another sufficient condition is that the domain be star-shaped with a uniform scaling property; see \cite{Capuzzo-Dolcetta1990HamiltonJacobiConstraints, tu_vanishing_2022}.

\begin{thm}\label{thm:infsupC1}
    Assume \ref{itm:A1}--\ref{itm:A2} and \ref{itm:D1}. Then, for every $p\in\R^n$,
    \begin{align}\label{eq:infmaxformula}
        \overline{H}_{\Omega}(p) = \inf_{\varphi\in C^1(\T^n)} \max_{x\in \overline{\Omega}} H(x,p+D\varphi(x)). 
    \end{align}
\end{thm}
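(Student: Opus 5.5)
The inequality "$\le$" in \eqref{eq:infmaxformula} is immediate from the inf--sup formula \eqref{eq:infsupformula}. Every $\varphi\in C^1(\T^n)$ is Lipschitz, and since $x\mapsto H(x,p+D\varphi(x))$ is continuous on $\overline\Omega$, its essential supremum over $\overline\Omega$ equals its maximum. Hence $\overline H_\Omega(p)\le \inf_{C^1}\max_{\overline\Omega}H$.

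For "$\ge$", I fix $\varepsilon>0$ and aim to construct $\varphi\in C^1(\T^n)$ with $H(x,p+D\varphi(x))\le \overline H_\Omega(p)+\varepsilon$ for all $x\in\overline\Omega$. The starting point is a periodic viscosity solution $v$ of \eqref{eqn:cell}.
\begin{itemize}
\item By coercivity \ref{itm:A2}, $|Dv|\le C$ in $\Omega$.
\item Since $\partial\Omega$ is $C^1$, $\Omega$ is locally a Lipschitz epigraph, hence uniformly quasi-convex on $\T^n$. So $v$ is Lipschitz on $\overline\Omega$.
\item By convexity, $H(y,p+Dv(y))\le \overline H_\Omega(p)$ a.e.\ in $\Omega$.
\end{itemize}
The difficulty is that $v$ lives only on $\overline\Omega$, and mollification needs room beyond the boundary.

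\textbf{Step 1 (inward vector field).} I build a smooth $\Z^n$-periodic vector field $\gamma$ with $|\gamma|\le 1$ and $\gamma\cdot\nu\le -c_0<0$ on $\partial\Omega$, where $\nu$ is the continuous outer normal. To get it, mollify $-\nu$ near $\partial\Omega$ and glue with a partition of unity. Compactness of $\partial\Omega$ in $\T^n$ and uniform continuity of $\nu$ then give constants $c,\delta_0>0$ with the following property: for $\delta\in(0,\delta_0]$, the map $\Phi_\delta(x)=x+\delta\gamma(x)$ sends the neighborhood $U_\delta=\{x:\operatorname{dist}(x,\overline\Omega)<c\delta\}$ into $\{y\in\Omega:\operatorname{dist}(y,\partial\Omega)>c\delta\}$. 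This is proved by writing $\partial\Omega$ locally as a $C^1$ graph and noting that the graph's slope varies by $o(1)$ on balls of radius $O(\delta)$. I expect this uniform geometric statement to be the main technical obstacle. It is the only place where the $C^1$ regularity is used quantitatively, and its modulus is only that of continuity of $\nu$, not a reach bound.

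\textbf{Step 2 (pull back).} Set $w=v\circ\Phi_\delta$ on $U_\delta$; it is Lipschitz and periodic. A.e.\ we have $Dw(x)=(I+\delta D\gamma(x))^{T}Dv(\Phi_\delta(x))$, so $|Dw-Dv(\Phi_\delta x)|\le C\delta$ and $|x-\Phi_\delta x|\le\delta$. Let $\omega$ be the modulus of continuity of $H$ on $\R^n\times \overline B(0,|p|+2C)$, which is uniform by periodicity. Then
\[
H(x,p+Dw(x))\le H(\Phi_\delta x,p+Dv(\Phi_\delta x))+\omega(C\delta)\le \overline H_\Omega(p)+\omega(C\delta)\quad\text{a.e. in }U_\delta .
\]

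\textbf{Step 3 (mollify and extend).} Let $w_\eta=\rho_\eta*w$ with $\eta<c\delta/2$, which is smooth on $U_{\delta/2}$. Jensen's inequality (convexity in $p$) and the continuity of $H$ in $x$ give, for $x\in\overline\Omega$,
\[
H(x,p+Dw_\eta(x))\le \int\rho_\eta(z)H(x-z,p+Dw(x-z))\,dz+\omega(\eta)\le \overline H_\Omega(p)+\omega(C\delta)+\omega(\eta).
\]
Finally, take a smooth periodic cutoff $\chi$ with $\chi\equiv1$ on $U_{\delta/4}$ and $\operatorname{supp}\chi\subset U_{\delta/2}$. Set $\varphi=\chi w_\eta$, extended by zero. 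Then $\varphi\in C^\infty(\T^n)$ and $\varphi=w_\eta$ on a neighborhood of $\overline\Omega$, so only values on $\overline\Omega$ matter. Choosing $\delta,\eta$ small makes $\max_{\overline\Omega}H(x,p+D\varphi)\le \overline H_\Omega(p)+\varepsilon$, which proves "$\ge$".
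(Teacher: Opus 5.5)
Your proof is correct, and it takes a more direct route than the paper's. The mechanism is the same in both: push the domain inward by a near-identity map, mollify using Jensen's inequality, then cut off.

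The paper never builds a transversal field on the $C^1$ domain itself. Instead it:
\begin{enumerate}[(a)]
\item approximates $\Omega$ from inside by smooth domains $\Omega_h=\{g*\eta_h+2h<0\}\subset\Omega$;
\item proves $d_{\mathcal H}(\Omega_h,\Omega)\lesssim h$ and $\mathrm{reach}(\partial\Omega_h)\gtrsim h/(h+\omega(h))$;
\item applies its $C^2$ construction on $\Omega_h$ at scale $\varepsilon\sim h$. This pulls back a cell subsolution of $\Omega_h$ through the normal-flow diffeomorphism $T_\delta(x)=x-\delta\eta(d(x))Dd(x)$, with $\|DT_\delta-\mathbf{Id}\|_{\mathrm{op}}\le 8\delta/\mathrm{reach}$. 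The scale is admissible because $\varepsilon/\mathrm{reach}(\partial\Omega_h)\lesssim h+\omega(h)\to0$, and one has $\overline\Omega\subset\mathcal D^{\varepsilon}(\Omega_h)$;
\item closes with the monotonicity $\overline H_{\Omega_h}(p)\le\overline H_\Omega(p)$.
\end{enumerate}
For a $C^2$ domain, your $\Phi_\delta$ with $\gamma=\eta(d)Dd$ is essentially $T_\delta^{-1}$.

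The real difference is this. The paper insists on the normal field $Dd$, which is unavailable near a merely $C^1$ boundary. That forces the detour through smooth approximations with curvature and reach control. You observe that any smooth, uniformly transversal field will do. Then only transversality enters (through the modulus of continuity of $\nu$), not curvature. You can work directly with the cell solution on $\Omega$, with no monotonicity step and no Hausdorff or reach estimates.

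What each route buys:
\begin{itemize}
\item Your route is shorter and more robust. It only needs a smooth inward field with a uniform cone property, so it should extend to Lipschitz domains, or to $(\mathcal D_0)$ after mollifying $\gamma$. In that setting you do not even need global Lipschitz continuity of $v$: $\Phi_\delta$ lands at distance $>c\delta$ from $\partial\Omega$, where the local bound $|Dv|\le C$ suffices.
\item The paper's route produces error bounds with explicit dependence on $\mathrm{reach}$. It reuses these for Theorem~\ref{thm:quan} and Corollary~\ref{cor:RightContinuityC2Torus}.
\end{itemize}

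One detail you should state explicitly: for $\delta\|D\gamma\|_\infty<1$, the map $\Phi_\delta$ is a periodic diffeomorphism. This is what lets you transport both the a.e.\ chain rule and the a.e.\ inequality $H(y,p+Dv(y))\le\overline H_\Omega(p)$ through $\Phi_\delta$.
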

As mentioned earlier, an inf-sup formula over Lipschitz functions was established in \cite{han_quantitative_2024}, extending earlier results in the absence of boundaries \cite{contreras_lagrangian_1998, gomes_stochastic_2002}.

We also establish a quantitative stability estimate in terms of
the Hausdorff distance between the domains and the reach of
the perturbed boundary \cite{contreras_lagrangian_1998}.

\begin{thm}\label{thm:quan}
    Assume \ref{itm:A1}--\ref{itm:A2}.
Let $\Omega,\Omega'\subset\T^n$ be open, connected domains
satisfying \ref{itm:D0}, and assume that $\partial\Omega'$
is of class $C^2$.
    For every $R>0$, there exists a constant $\widetilde C_R>0$ such that, for all $|p|\leq R$,
    \begin{equation}\label{eq:quant}
        \overline{H}_\Omega(p) - \overline{H}_{\Omega'}(p)
        \leq 
        2\omega_R
        \left(
            \widetilde{C}_R\left( 1+\frac{1}{\mathrm{reach}(\partial\Omega')}	
            \right)
            d_{\mathcal{H}}(\Omega,\Omega')
        \right).
    \end{equation}
    Here $\omega_R$ is a modulus of continuity of $H$
    on the prescribed compact momentum set, and $\widetilde{C}_R$ is independent of $\Omega$ or $\Omega'$.
\end{thm}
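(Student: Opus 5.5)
We compare the two effective Hamiltonians through the inf--sup/inf--max representations. To bound $\overline H_\Omega(p)$ from above by quantities on $\Omega'$, we need a test function that is admissible over $\overline\Omega$ and nearly optimal for $\Omega'$. Set $\delta=d_{\mathcal H}(\Omega,\Omega')$ and $\rho=\mathrm{reach}(\partial\Omega')$.

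\textbf{Reduction to $\delta<\rho/2$.} If $\delta\ge \rho/2$, the argument of $\omega_R$ is at least $2\widetilde C_R$. The difference $\overline H_\Omega(p)-\overline H_{\Omega'}(p)$ is bounded by a constant depending only on $R$: take $\varphi=0$ in \eqref{eq:infsupformula} for the upper bound, and use coercivity for the lower bound $\overline H\ge \min_{y}\min_q H(y,q)$. Choosing $\widetilde C_R$ large, this case is absorbed, provided $\omega_R$ is normalized to be nondecreasing with $\omega_R(s)\ge$ that bound for $s\ge 2\widetilde C_R$.

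\textbf{Main case $\delta<\rho/2$.} Let $\varepsilon>0$ and pick, via Theorem~\ref{thm:infsupC1} applied to $\Omega'$ (a $C^2$, hence $C^1$, domain), a function $\varphi\in C^1(\T^n)$ with $\max_{\overline{\Omega'}}H(x,p+D\varphi)\le \overline H_{\Omega'}(p)+\varepsilon$. One could also work with a Lipschitz viscosity subsolution of the cell problem on $\Omega'$.

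The idea is to transport $\varphi$ to a set containing $\overline\Omega\subset \{d(\cdot,\Omega')\le\delta\}$. Inside the tubular neighborhood $U=\{x:\ \mathrm{dist}(x,\partial\Omega')<\rho\}$, the nearest-point projection $\pi$ onto $\partial\Omega'$ is well defined and $C^1$, with $\|D\pi\|\le \rho/(\rho-t)$ at distance $t$ (Federer). Define
\[
\Phi(x)=x-\eta\bigl(d(x)\bigr)\nu(\pi x),
\]
where $d$ is the signed distance (positive outside $\Omega'$), $\nu$ is the outward normal, and $\eta$ is a smooth increasing function. We choose $\eta$ so that $\Phi$ maps $\{d\le\delta\}$ into $\overline{\Omega'}$ and equals the identity for $d\le-\rho/2$; for instance, $\eta$ affine on $[-\rho/2,\delta]$ with $\eta(-\rho/2)=0$ and $\eta(\delta)=\delta$. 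Then $\Phi$ is $C^1$ on the torus, and
\[
|D\Phi-I|\le C\bigl(|\eta'|+|\eta|\,\|D\nu\|\bigr)\le C\Bigl(\frac{\delta}{\rho}+\frac{\delta}{\rho}\Bigr),
\]
using $\eta'\approx \delta/\rho$ and $\|D(\nu\circ\pi)\|\lesssim 1/\rho$. Also $|\Phi(x)-x|\le\delta$.

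Set $\psi=\varphi\circ\Phi$. For $x\in\overline\Omega$ we have $\Phi(x)\in\overline{\Omega'}$ and
\[
p+D\psi(x)=p+D\Phi(x)^{T}D\varphi(\Phi x).
\]
Therefore
\[
H(x,p+D\psi(x))\le H\bigl(\Phi x,\,p+D\varphi(\Phi x)\bigr)+\omega_R\bigl(|x-\Phi x|+|D\Phi-I|\,|D\varphi|\bigr).
\]
Here we need the modulus to act jointly in $(y,q)$ on a compact momentum set, and we need $|D\varphi|$ bounded by a constant depending on $R$ only.

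\textbf{Main obstacle.} The crucial point is this uniform gradient bound, independent of $\Omega'$. A near-minimizer from Theorem~\ref{thm:infsupC1} satisfies $H(x,p+D\varphi)\le \overline H_{\Omega'}+\varepsilon\le C_R$ only on $\overline{\Omega'}$, so coercivity bounds $|D\varphi|$ by some $K_R$ there. This is exactly where we need it, since $\Phi$ lands in $\overline{\Omega'}$. Outside, $\varphi$ is irrelevant.

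With the bound in hand, the perturbation in the momentum argument is at most $K_R\,C\delta/\rho$ and in the spatial argument at most $\delta$. Bounding $\omega_R(a+b)$ by $2\omega_R(\max)$ gives
\[
H(x,p+D\psi(x))\le \overline H_{\Omega'}(p)+\varepsilon+2\omega_R\bigl(\widetilde C_R(1+\rho^{-1})\delta\bigr).
\]
Taking the supremum over $\overline\Omega$ and invoking \eqref{eq:infsupformula} for $\Omega$ (with $\psi\in\mathrm{Lip}(\T^n)$), then letting $\varepsilon\to0$, yields \eqref{eq:quant}.

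The delicate geometric checks are two. First, $\Phi$ must send $\overline\Omega$ into $\overline{\Omega'}$: points with $d\in(0,\delta]$ move inward by $\eta(d)\ge d$, since $\eta(t)\ge t$ on $[0,\delta]$ for the affine choice. Second, $\Phi$ must be globally $C^1$ and periodic, which follows from the periodicity of $\Omega'$.
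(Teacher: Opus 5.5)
Your proposal is correct and is essentially the paper's proof: the paper also disposes of the regime $d_{\mathcal H}(\Omega,\Omega')\gtrsim\mathrm{reach}(\partial\Omega')$ by a crude bound, and in the main regime uses the normal-direction deformation of Lemma~\ref{lem:distance} (your $\Phi$ plays the role of $T_\delta^{-1}$, with $\|\mathrm{D}T_\delta-\mathbf{Id}\|_{\mathrm{op}}\le 8\delta/\mathrm{reach}(\partial\Omega')$) to carry an object built for $\Omega'$ onto a neighborhood of $\overline\Omega$ before applying the inf--sup formula on $\Omega$; the only difference is that the paper transports a Lipschitz subsolution and then mollifies (Lemma~\ref{lem:approxdomain}), whereas you precompose a $C^1$ near-minimizer from the already established inf--max formula, which spares you both the invertibility of $\Phi$ and the mollification. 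One small fix is needed: your increasing, affine $\eta$ with $\eta(-\rho/2)=0$ creates a kink and leaves $\Phi$ undefined where $d\ge\rho$, so take instead $\eta=\delta\,\zeta$ with $\zeta$ a smooth cutoff equal to $1$ on $[-\rho/4,\rho/4]$ and supported in $(-\rho/2,\rho/2)$ (after lowering your threshold to $\delta\le\rho/4$), as in the paper's construction of $T_\delta$.
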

\begin{rem}
    It is worth noting that $\overline{H}_{\Omega_h}$ may not converge to $\overline{H}_\Omega$ even if $d_{\mathcal{H}}(\Omega,\Omega_h)\to 0$ (see Example \ref{example:sharpNonconv}). 
\end{rem}

When $\Omega=\T^n$ and $\Omega_\varepsilon$ is a diluted domain, with obstacles shrinking as $\varepsilon\to 0^+$, such convergence rates \eqref{eq:quant} as $\varepsilon\to 0^+$ were studied in \cite{han_quantitative_2024} under stronger assumptions on $H$.

In Section~\ref{sec:example}, we construct examples showing that the order of the modulus of continuity in Theorem~\ref{thm:quan} is optimal in certain cases and that stability may fail under Hausdorff convergence alone.
To the best of our knowledge, the dependence of the quantitative stability estimate \eqref{eq:quant} on $\mathrm{reach}(\partial\Omega)$ has not been previously investigated in this context. Recall that a closed hypersurface has positive reach if and only if it is of class \(C^{1,1}\); see \cite[Theorem 1.3]{scholtes2013hypersurfacespositivereachalternating}.

\begin{cor}\label{cor:stabi}
Assume \ref{itm:A1}--\ref{itm:A2}, \ref{itm:D1} and $p \in \R^n$.

\begin{enumerate}[(i)]
    \item The map
$\Omega\mapsto\overline H_\Omega(p)$ is upper semicontinuous
with respect to Hausdorff convergence, that is,
\begin{equation*}
    \limsup_{d_{\mathcal H}(\Omega,\Omega')\to 0}
    \overline H_{\Omega'}(p)
    \leq \overline H_\Omega(p).
\end{equation*}

    \item If $\Omega\subset\Omega'$, then
    \begin{equation*}
        \lim_{\substack{d_{\mathcal H}(\Omega,\Omega')\to 0\\
        \Omega\subset\Omega'}}
        \overline H_{\Omega'}(p)
        =\overline H_\Omega(p).
    \end{equation*}

    \item If $\{\Omega_k\}$ is an increasing sequence
    of domains satisfying \ref{itm:D0} and
    $\Omega=\bigcup_{k=1}^\infty\Omega_k$, then
    \begin{equation*}
        \lim_{k\to\infty}\overline H_{\Omega_k}(p)
        =\overline H_\Omega(p).
    \end{equation*}

    \item If $\{\Omega_k\}$ is an arbitrary sequence of domains with $C^2$ boundaries converging to $\Omega$ in Hausdorff distance such that
    \begin{equation}\label{eq:dH}
        \lim_{k\to \infty} \frac{d_{\mathcal{H}}(\Omega_k,\Omega)}{\mathrm{reach}(\partial\Omega_k)} = 0 \qquad\text{then}\qquad \lim_{k\to \infty} \overline{H}_{\Omega_k}(p) = \overline{H}_{\Omega}(p). 
    \end{equation}
\end{enumerate}
\end{cor}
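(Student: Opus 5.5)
Part (i) follows from Theorem~\ref{thm:infsupC1}. Fix $\delta>0$ and pick $\varphi\in C^1(\T^n)$ with $\max_{\overline\Omega}H(x,p+D\varphi(x))<\overline H_\Omega(p)+\delta$. The function $x\mapsto H(x,p+D\varphi(x))$ is continuous and periodic, hence uniformly continuous. So there is $\eta>0$ such that $H(x,p+D\varphi(x))<\overline H_\Omega(p)+2\delta$ on the $\eta$-neighbourhood of $\overline\Omega$. If $d_{\mathcal H}(\Omega,\Omega')<\eta$, then $\overline{\Omega'}$ lies in this neighbourhood. The inf--sup formula \eqref{eq:infsupformula} applied to $\Omega'$ (valid under \ref{itm:D0}, or under \ref{itm:D1} directly) then gives $\overline H_{\Omega'}(p)\le\overline H_\Omega(p)+2\delta$. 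Letting $\delta\to0$ proves (i).

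For (ii) and (iii) I will first prove a monotonicity statement: if $\Omega\subset\Omega'$, then $\overline H_{\Omega}(p)\le\overline H_{\Omega'}(p)$. This again uses \eqref{eq:infsupformula}: any Lipschitz test function for $\overline{\Omega'}$ is admissible for the smaller set $\overline\Omega$, and the essential supremum over the smaller set is no larger. (Some care is needed with ess sup on closures, but $\partial\Omega$ has measure zero in the $C^1$ case, so the essential supremum over $\overline\Omega$ equals that over $\Omega$.) Combining monotonicity with (i) gives (ii): $\overline H_\Omega\le\liminf\overline H_{\Omega'}\le\limsup\overline H_{\Omega'}\le\overline H_\Omega$.

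For (iii), monotonicity shows that $\overline H_{\Omega_k}(p)$ is nondecreasing and bounded by $\overline H_\Omega(p)$. It remains to prove $d_{\mathcal H}(\Omega_k,\Omega)\to0$ and then apply (i). Note that in (iii) the roles are reversed: here $\Omega_k\subset\Omega$, so I need a lower bound. I will therefore argue directly, by stability of viscosity subsolutions. Let $v_k$ be periodic solutions of \eqref{eqn:cell} on $\Omega_k$. By coercivity they satisfy $|p+Dv_k|\le C$ in $\Omega_k$, with $C$ uniform because $\overline H_{\Omega_k}(p)$ is bounded. Normalize $v_k(x_0)=0$ at a point $x_0\in\Omega_1$. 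The $v_k$ are then locally equi-Lipschitz on each $\Omega_j$, and $\Omega_j$ is connected, open, and exhausts $\Omega$. Passing to a subsequence by Arzel\`a--Ascoli and a diagonal argument, we get $v_k\to v$ locally uniformly in $\Omega$ and $\overline H_{\Omega_k}(p)\to c$. Here $v$ is a Lipschitz subsolution of $H(y,p+Dv)\le c$ in $\Omega$, with a global bound on $|p+Dv|$. Since it is Lipschitz, $v$ is an a.e. subsolution, and it extends to $\overline\Omega$. Extending $v$ to a Lipschitz function on $\T^n$ and using \eqref{eq:infsupformula} for $\Omega$ gives $\overline H_\Omega(p)\le c$. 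This proves (iii). A subtle point: global Lipschitz continuity of $v$ on $\Omega$ needs path-connectedness with bounded geodesic distance. This holds since $\Omega$ is $C^1$ on the torus, so the intrinsic metric is comparable to the Euclidean one.

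For (iv), apply Theorem~\ref{thm:quan} twice. With $(\Omega,\Omega_k)$ in the roles of $(\Omega,\Omega')$ it gives $\overline H_\Omega(p)-\overline H_{\Omega_k}(p)\le2\omega_R\bigl(\widetilde C_R(1+\mathrm{reach}(\partial\Omega_k)^{-1})d_{\mathcal H}\bigr)$, which tends to $0$ by \eqref{eq:dH}. The reverse inequality $\limsup\overline H_{\Omega_k}\le\overline H_\Omega$ is exactly (i). The main obstacle I anticipate is in (iii): obtaining the uniform Lipschitz/compactness bound for $v_k$ up to the boundary of the increasing domains, and justifying that the limit subsolution is admissible in \eqref{eq:infsupformula} for $\Omega$. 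I will handle this via the \ref{itm:D1} regularity of $\Omega$ as described.
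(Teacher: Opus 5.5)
Your proposal is correct and follows the paper's route. Parts (i) and (ii) are the argument of Lemma~\ref{lem:convphi}: a near-optimal $C^1$ test function from Theorem~\ref{thm:infsupC1}, uniform continuity, the inf--sup bound on $\Omega'$, and monotonicity. Part (iii) is Lemma~\ref{lem:innerH}, and (iv) is Theorem~\ref{thm:quan} combined with (i).

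The one deviation is the last step of (iii). You pass from the limit subsolution to $\overline H_\Omega(p)\le c$ via a periodic Lipschitz extension and \eqref{eq:infsupformula}, rather than the comparison-based characterization \eqref{eq:inf-Hbar}; both work on a $C^1$ domain. Your stray remark about proving $d_{\mathcal H}(\Omega_k,\Omega)\to0$ and then applying (i) can simply be deleted, since (i) only gives the upper bound that monotonicity already provides.
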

To the best of our knowledge, in the state-constraint setting, the closest result to Corollary~\ref{cor:stabi}-(ii) is the outer continuity for $C^2$ domains established in \cite{barles_large_2010} for second-order equations with subquadratic growth; see Remark~\ref{rmk:infinityboundary} and the literature review below.

Theorems \ref{thm:infsupC1}, \ref{thm:quan}, and Corollary \ref{cor:stabi} also hold for bounded domains without periodicity, with the same boundary regularity assumptions, provided $H\in C(\R^n\times\R^n)$ satisfies \ref{itm:A2}, and we replace $C^1(\T^n)$ and $\mathrm{Lip}(\T^n)$ by $C^1(\R^n)$ and $\mathrm{Lip}(\R^n)$, respectively. 
For scaling perturbations, finer regularity properties,
including differentiability and a formula for the derivative,
were established in \cite{tu_vanishing_2022} and extended
to second-order equations with $C^2$ domain, super-quadratic growth in \cite{bozorgnia_regularity_2024}, or fully nonlinear first-order equations in \cite{tu_generalized_2024}. 

\subsection{Relevant Literature} 
When $\Omega=\T^n$, the cell problem \eqref{eqn:cell} corresponds to the classical periodic setting on $\R^n$, with $x\mapsto H(x,p)$ being $\Z^n$-periodic, and $\overline H_{\T^n}$ is the effective Hamiltonian arising in periodic homogenization \cite{lions_homogenization_1986}. We refer to \cite{Horie1998Homogenization,alvarez_homogenization_1999,alvarez_hamilton-jacobi_2001} for qualitative homogenization results on perforated domains, and to \cite{Capuzzo-Dolcetta1990HamiltonJacobiConstraints,mitake_asymptotic_2008,mitake_large_2009} for studies of problems with boundary conditions. Under \ref{itm:A1}, we have an inf-max formula \cite{tran_hamilton-jacobi_2021}
\begin{align}\label{eq:infmaxTorus}
	\overline{H}_{\T^n}(p)
    		=
    	\inf_{\varphi \in C^1(\T^n)}
      \max_{y \in \overline{\Omega}} H(y, p + D\varphi(y)).
\end{align}
In the presence of a boundary, the natural generalization leads to the state-constraint boundary condition \eqref{eqn:cell}, and additional complications arise in dealing with the boundary, and obtaining such an inf--max formula is nontrivial. 
The homogenization problems on perforated domains have been studied in the literature under either state-constraint \cite{Horie1998Homogenization,alvarez_homogenization_1999,alvarez_hamilton-jacobi_2001,han_quantitative_2024}, Dirichlet boundary condition \cite{han_tu_quantitative_2025}, or Neumann boundary condition \cite{mitake_quantitative_2026}. We briefly recall the state-constraint homogenization framework.
For $\varepsilon>0$, let $\Omega_\varepsilon = \varepsilon \Omega$, and let $u^\varepsilon$ denote the unique viscosity solution to
\begin{equation}\label{eqn:PDE_epsilon}
    \left\{\begin{aligned}
    u_t^\varepsilon+H\left(\tfrac{x}{\varepsilon}, Du^\varepsilon\right) & \leq 0 \qquad \qquad \qquad \text{in } \Omega_\varepsilon \times (0, \infty),\\
    u_t^\varepsilon+H\left(\tfrac{x}{\varepsilon}, Du^\varepsilon\right) & \geq 0 \qquad \qquad \qquad \text{on } \overline{\Omega}_\varepsilon \times (0, \infty),\\
    u^\varepsilon(x,0) & =g(x)  \qquad \qquad \, \,\, \text{on } \overline{\Omega}_\varepsilon \times \{t=0\}, 
    \end{aligned}
    \right.
\end{equation}
where $g$ is the initial condition. 
Under suitable assumptions, as $\varepsilon \to 0^+$, $u^\varepsilon$ converges locally uniformly to $u$ (see \cite[Theorem~1.2]{Horie1998Homogenization}), the unique viscosity solution of 
\begin{equation}\label{eqn:PDE_limit}
    \left\{\begin{aligned}
        u_t+\overline{H}_\Omega\left(Du\right) & = 0 \qquad \quad \text{in } \mathbb{R}^n \times (0, \infty), \\
    u(x, 0)&=g(x) \, \, \,\quad \text{on } \mathbb{R}^n \times \left\{t=0\right\}, 
    \end{aligned}
    \right. 
\end{equation}
where $\overline{H}_\Omega$ is the effective Hamiltonian as defined in \eqref{eqn:cell}. 
Convergence rates for solutions have been extensively studied since \cite{capuzzo-dolcetta_rate_2001}; see \cite{MitakeTranYu2019,jing_effective_2020,tu_rate_2021,hu_polynomial_2025,tu_convergence_2026} for related settings. The optimal rate was established in \cite{tran_optimal_2025}, and has been extended to multiscale problems \cite{han_rate_2023}, 
weakly coupled systems \cite{mitake_rate_2025}, 
perforated domains with boundary conditions \cite{han_quantitative_2024,mitake_quantitative_2026,han_tu_quantitative_2025,sato_quantitative_2026}, and fully nonlinear equations \cite{mitake_quantitative_2025_contact}.

A related setting in which the effective Hamiltonian appears is the state-constraint problem for second-order equations. Consider a $C^2$ domain $\Omega$ and $f\in C(\overline{\Omega})$. It is shown in \cite{lasry_nonlinear_1989} in the subquadratic regime 
($1 < m \leq 2$), the constant $\overline{H}^\varepsilon_\Omega(0)$ is defined as the unique value for which the problem
\begin{equation}\label{eq:2nds}
\begin{cases}
\begin{aligned}
    \frac{1}{2}|Du|^{m} - f(x) - \varepsilon\Delta u &= \overline{H}^\varepsilon_\Omega(p) && \text{in }\Omega, \\
    u &= +\infty && \text{on }\partial\Omega
\end{aligned}
\end{cases}
\end{equation}
admits a solution. We refer to \cite{lasry_nonlinear_1989, han_remarks_2022, dutta_rate_2026, barles_large_2010,tabet_tchamba_large_2010} for more details on this problem. Notably, when $m=2$, by the Hopf-Cole transform, $v = e^{-u/2\varepsilon}$, $v$ is a solution to 
\begin{equation}\label{eq:transformed}
\left\{
\begin{aligned}
    -2\varepsilon^2\Delta v+f(x)v&=-\overline{H}_\Omega(p)v
    &&\text{in }\Omega,\\
    v&=0
    &&\text{on }\partial\Omega.
\end{aligned}
\right.
\end{equation}
In this case, $-\overline{H}_\Omega(p)$ is the principal eigenvalue of the linear elliptic operator $\mathcal{L}=-2\varepsilon^2\Delta+f$ under Dirichlet boundary conditions, and tools from linear theory can be used to obtain further information about the map $\Omega\mapsto\overline{H}_\Omega$, as in \cite{bozorgnia_regularity_2024}. For $1<m\leq 2$, we note that the continuity of $\overline{H}_\Omega$ under exterior approximation in the Hausdorff distance was studied in \cite{barles_large_2010} (see Remark~\ref{rmk:infinityboundary}). More generally, the stability of $\Omega\mapsto\overline{H}_\Omega(p)$ remains relatively unexplored, apart from results for scaled domains, including finer information on derivative representations, in \cite{tu_vanishing_2022,bozorgnia_regularity_2024,tu_generalized_2024}. See also \cite{kim_state-constraint_2020} for a related result on nested domains in the first-order setting.

\subsection*{Organization of this paper}

In Section \ref{sec:prelim}, we introduce notation and preliminary results on distance functions, reach, and curvature of $C^2$ domains, and construct smooth approximations of $C^1$ domains with uniform geometric estimates. In Section \ref{sec:effect}, we first study properties of the effective Hamiltonian and establish the inf--max representation for $C^2$ domains, which is then extended to $C^1$ domains. We then prove Theorem \ref{thm:quan}. As a consequence, we also derive stability results (Corollary \ref{cor:stabi}) for the map $\Omega\mapsto\overline H_\Omega(p)$ under domain perturbations. In Section~\ref{sec:example}, we construct Example~\ref{example:sharpNonconv} to demonstrate that Hausdorff convergence alone does not guarantee convergence of the effective Hamiltonians. We then present two further examples with classical mechanics Hamiltonians showing that the quantitative estimate in Theorem~\ref{thm:quan} can be attained in order.


\section{Preliminaries}\label{sec:prelim}
\subsection{Notations and Simplifications}
We recall some standard simplifications (see \cite{han_quantitative_2024,tran_optimal_2025}) and necessary estimates.
Let $\T^n=\R^n/\Z^n$ be the usual flat $ n$-dimensional torus.
Then, we can also write $H \in C(\T^n \times \R^n)$. Although \eqref{eqn:cell} only requires $H$ to be defined on $\ol{\Omega}\times \R^n$, it is more convenient for us to consider $H$ on $\R^n \times \R^n$. Note that \ref{itm:A1} and \ref{itm:A2} imply that there exists a constant $C_1>0$ so that
\begin{equation}\label{eq:H-lower-bd}
H(y, p) \geq -C_1 \qquad \text{ for all } (y, p) \in \R^n \times \mathbb{R}^n.
\end{equation}
We write $B(x,r) = B_r(x)$ to denote the open ball centered at $x\in \R^n$ with radius $r>0$.
We write $Y=[-\frac{1}{2},\frac12]^n$ as the unit cube in $\R^n$.
For $g \in \mathrm{Lip}(\mathbb{R}^n)$, the space of Lipschitz functions on $\mathbb{R}^n$, let $\mathrm{Lip}(g)$ denote its Lipschitz constant.
We list some definitions and notations as follows.
\begin{itemize}
    \item Let $\eta \in C_c^\infty(B_1(0))$ be an even function satisfying $\eta\geq0$ and $\int_{\R^n} \eta(x)\,dx = 1$.
For $\varepsilon > 0$, define
\begin{equation}\label{eq:mollifier}
    \eta_\varepsilon(x) = \varepsilon^{-n}\eta(\varepsilon^{-1}x), \qquad x\in \R^n.
\end{equation}
The family $\{\eta_\varepsilon\}_{\varepsilon>0}$ is called a standard mollifier. We denote by $K_0=\Vert D\eta\Vert_{L^1(\R^n)}$.
\item For $p,q \in \mathbb{R}^n$, we denote by $[p,q]$ the line segment connecting $p$ and $q$, that is,
$$[p,q] := \{\, tp + (1-t)q : t \in [0,1] \,\}. $$

\item Under \ref{itm:D1}, when $\Omega\subset\mathbb T^n$, we identify $v\in C(\Omega)$ with its $\mathbb Z^n$-periodic extension, satisfying $v(x+k)=v(x)$ for all $x\in\Omega$ and $k\in\mathbb Z^n$.
\end{itemize}

\subsection{Hausdorff distance}
For a nonempty subset $\mathcal{O}\subset\R^n$, we define
    $$\mathrm{dist}(x,\mathcal{O})
		= \inf_{y\in \mathcal{O}} |x-y|,
        \qquad x\in \R^n.$$
        It is clear that $x\mapsto \mathrm{dist}(x,\mathcal{O})$ is Lipschitz with constant $1$, and is monotone decreasing with respect to inclusion, i.e., $\mathrm{dist}(x,\mathcal{O}) \leq \mathrm{dist}(x,\mathcal{O}')$ if $\mathcal{O}\supset \mathcal{O}'$. Furthermore, $\mathrm{dist}(x,\mathcal{O}) = \mathrm{dist}(x,\overline{\mathcal{O}})$ for all $x\in \R^n$, and $\mathrm{dist}(x,\overline{\mathcal{O}}) = \mathrm{dist}(x,\partial{\mathcal{O}})$ if $x\notin \mathcal{O}$.

\begin{defn}[Hausdorff distance]
    For nonempty subsets $X, Y\subset \R^n$, the Hausdorff distance between $X$ and $Y$ is defined as
    \begin{align*}
    d_\mathcal{H}(X,Y) = \max
    \left\lbrace
        \sup_{x\in X} \mathrm{dist}(x,Y),
        \sup_{y\in Y} \mathrm{dist}(y,X)
    \right\rbrace.
    \end{align*}
\end{defn}

\begin{rem}
    It is clear that if $X\subset Y$, then $d_\mathcal{H}(X,Y) =
        \sup_{y\in Y} \mathrm{dist}(y,X)$. Furthermore,
    for every $\delta>0$ and for every $x\in X$, there exists $y\in Y$ such that $|x-y|\leq d_{\mathcal{H}}(X,Y)+\delta$.
\end{rem}

\begin{defn}[Distance on hypersurface]
We define $d_{\Sigma}:\Sigma\times\Sigma \to [0,+\infty]$ by
\begin{align}\label{eq:distanceSigma}
    d_\Sigma(x,y) =
    \inf
    \left\lbrace
        \int_0^1 |\dot{\gamma}(s)|\;ds: \gamma\in C^1([0,1];\Sigma), \gamma(0) = x, \gamma(1) = y
    \right\rbrace, \quad x,y\in \Sigma.
\end{align}
\end{defn}
If $x$ and $y$ lie in different connected components of $\Sigma$, we set $d_\Sigma(x,y)=+\infty$.

\subsection{Distance function}
The signed distance function is defined by
 \begin{align}\label{eq:signd}
     d(x) = \begin{cases}
         +\mathrm{dist}(x,\partial\Omega), & x\in \overline{\Omega}, \\
         -\mathrm{dist}(x,\partial\Omega), & x\notin \overline{\Omega}.
     \end{cases}
 \end{align}
The distance function $d_{\partial\Omega}(x) = \mathrm{dist}(x,\partial\Omega)$ is differentiable at $x\notin\partial\Omega$ exactly when $x$ admits a unique projection $x_0=\pi_{\partial\Omega}(x)\in \partial\Omega$, and
    \begin{equation*}
        Dd_{\partial\Omega}(x) = \frac{x-x_0}{|x-x_0|}.
    \end{equation*}
    The signed distance function is globally Lipschitz with constant $1$, solving $|Dd| = 1$ in $\Omega$ in the viscosity sense, and also $|Dd| = 1$ almost everywhere in $\R^n$.
	If $\Omega$ is of class $C^1$, then $\nabla d(x_0) = \mathbf{n}(x_0)$ is well-defined and continuous for $x_0 \in \partial \Omega$. Moreover, $|\nabla d(x_0)| = 1$ for all $x_0 \in \partial \Omega$ in the classical sense. We refer to \cite{nikolov_boundary_2025, ignat_short_2025} for these results.

If $\Omega$ is $\Z^n$-periodic, then the distance function $d_{\partial\Omega}(x) = \mathrm{dist}(x,\partial\Omega)$ is also $\Z^n$-periodic.
Indeed, for any $x \in \R^n$ and $k \in \Z^n$, we have, for every $y \in \partial\Omega$, that $y + k \in \partial\Omega$, and thus
\[
|x - y| = |(x + k) - (y + k)| \geq d_{\partial\Omega}(x + k)
\quad \Longrightarrow \quad
d_{\partial\Omega}(x) \geq d_{\partial\Omega}(x + k).
\]
Reversing the roles of $x$ and $x + k$ yields $d_{\partial\Omega}(x) = d_{\partial\Omega}(x + k)$.

\subsection{Distance function on $C^2$ domains}
We recall some properties of the distance function on \(C^2\) domains from \cite[Section 14.5]{gilbarg_elliptic_2001}. Let \(\Omega\subset \mathbb{R}^n\) be an open set with \(C^2\) boundary \(\Sigma=\partial\Omega\).
Let $\mathbb{S}^{n-1} = \{\xi\in \R^n:|\xi|=1\}$, and $\nu:\partial\Omega\to \mathbb{S}^{n-1}\subset\R^n$ be the unit outward normal vector field, and $\mathbf{n} = -\nu$ be the inward normal vector.
The projection to $\Sigma$ is defined by
    \begin{equation*}
        \pi_{\Sigma}(x) = \{y\in \Sigma: |x-y| = \mathrm{dist}(x,\Sigma)\}.
    \end{equation*}
    The reach of a set is defined by
    \begin{equation}\label{eq:defnReach}
    \begin{aligned}
        \mathrm{reach}(\Sigma):= \sup \left\{r>0: \;\text{if}\;\mathrm{dist}(z,\Sigma) < r\;\text{then}\;\pi_{\Sigma}(z)\;\text{is a singleton}\right\}.
    \end{aligned}
    \end{equation}

\begin{lem} Let $\Omega\subset\R^n$ be open with $C^2$ boundary.
\begin{itemize}
\item[(i)] If $\Omega$ is bounded then it has a positive reach $\rho_\Omega = \mathrm{reach}(\Sigma)$, and the signed distance function is $C^2$ in the tubular neighborhood
\begin{equation}
	U_{\rho_\Omega} = \{x\in \R^n: \mathrm{dist}(x,\partial\Omega) < \rho_\Omega\}.
\end{equation}

\item[(ii)] If $\Omega+\Z^n = \Omega$, i.e., $\Omega\subset \T^n$, then the conclusion of (i) holds.
\end{itemize}
\end{lem}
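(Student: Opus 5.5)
The plan is to reduce (ii) to (i) and then prove (i) by the classical Gilbarg--Trudinger argument, taking care that compactness of $\Sigma$ is the only global input. For (i), compactness of $\Sigma$ together with its $C^2$ regularity gives a uniform bound $\kappa$ on the principal curvatures: the second fundamental form is continuous on a compact set. It also gives a uniform radius $\mu>0$ for the interior and exterior ball condition. To obtain $\mu$, I would cover $\Sigma$ by finitely many charts in which $\Sigma$ is the graph of a $C^2$ function with Hessian bounded by $M$. I would then take a Lebesgue number of the cover. Inside each chart the graph lies between two paraboloids, so it admits tangent balls of radius comparable to $1/M$ on both sides, shrunk if needed so that they stay inside the chart.

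With $\mu$ in hand, every $z$ with $\mathrm{dist}(z,\Sigma)<\mu$ has a unique nearest point. Indeed, if $y_1\neq y_2$ were two nearest points, then $z$ lies on the normal line through $y_1$ at distance less than $\mu$. The ball of radius $\mu$ tangent at $y_1$ on the side of $z$ does not meet $\Sigma$, and it contains $B(z,|z-y_1|)$ with the two boundaries touching only at $y_1$. This contradicts $y_2\in\partial B(z,|z-y_1|)\cap\Sigma$. Hence $\rho_\Omega\ge\mu>0$.

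Regularity of $d$ in $U_{\rho_\Omega}$ then follows from the map $\Phi(y,t)=y+t\mathbf n(y)$ on $\Sigma\times(-\rho_\Omega,\rho_\Omega)$. Its Jacobian at $(y,t)$ in principal coordinates is $\prod_i(1-\kappa_i(y)t)$. I need this to be nonzero whenever $|t|<\rho_\Omega$, which is the step I expect to be the main obstacle, since a priori one only knows $|t|<1/\kappa$ from the curvature bound. I would first try to prove $\rho_\Omega\le 1/|\kappa_i(y)|$ directly. Suppose $|\kappa_i(y)|\,\rho_\Omega>1$ for some $i$. Then points on the normal line at distance slightly larger than $1/|\kappa_i(y)|$ but less than $\rho_\Omega$ on the concave side are strictly closer to nearby points of $\Sigma$ than to $y$. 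Following the unique projection as the point moves inward from $y$, the projection must jump away from $y$, and this contradicts continuity of the projection. That continuity holds on $U_{\rho_\Omega}$ by a compactness argument using uniqueness of nearest points.

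Granting the Jacobian bound, $\Phi$ is a $C^1$ local diffeomorphism. It is injective because projections are unique, and it is onto $U_{\rho_\Omega}$, so it is a $C^1$ diffeomorphism there, with inverse $z\mapsto(\pi_\Sigma(z),d(z))$. The last coordinate of $\Phi^{-1}$ is $d$, so $d$ is $C^1$ with $Dd(z)=\mathbf n(\pi_\Sigma(z))$. The composition $\mathbf n\circ\pi_\Sigma$ is $C^1$, hence $d\in C^2(U_{\rho_\Omega})$. For completeness I would also record the Hessian formula from \cite[Lemma 14.17]{gilbarg_elliptic_2001}.

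For (ii), I would note that $\Sigma$ is $\Z^n$-periodic and that its image in $\T^n$ is compact. Curvature bounds, chart radii and $\mu$ can therefore be computed on a fundamental cell $Y$ and propagated by translations, since all the quantities involved are translation invariant. Nearest-point arguments are local, involving only balls of radius $<\mu$, so the proof of (i) applies verbatim with compactness of $\Sigma$ replaced by compactness of $\Sigma/\Z^n$. The periodicity of $d_{\partial\Omega}$ shown earlier ensures that $d$ is well defined on $\T^n$.
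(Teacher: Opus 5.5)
There is a genuine gap at exactly the step you flag as the main obstacle, and it reappears in your injectivity claim.

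\textbf{The ``jump'' argument does not work.} For the bound $\rho_\Omega\le 1/|\kappa_i(y)|$ you argue that the projection ``must jump away from $y$'', but nothing forces a jump. Put $z_s=y+s\mathbf n(y)$ and $t^*=\sup\{s<\rho_\Omega:\pi_\Sigma(z_s)=y\}$. Continuity of $\pi_\Sigma$ only gives $\pi_\Sigma(z_{t^*})=y$ and $\pi_\Sigma(z_s)\to y$ as $s\downarrow t^*$. So the projection may simply leave $y$ continuously.

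Your own inverse-function-theorem reasoning shows where this can happen. Suppose $D\Phi$ were invertible at $(y,t^*)$. Then local injectivity of $\Phi$, together with $\Phi(\pi_\Sigma(z_s),d(z_s))=\Phi(y,s)$ and $(\pi_\Sigma(z_s),d(z_s))\to(y,t^*)$, would force $\pi_\Sigma(z_s)=y$ for $s$ slightly above $t^*$. Hence the departure can only occur at a focal point, where $\prod_i(1-\kappa_i(y)t^*)=0$. At a focal point, a continuous departure is fully compatible with continuity of $\pi_\Sigma$, so the second-order expansion plus continuity gives no contradiction.

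\textbf{The injectivity claim has the same gap.} ``$\Phi$ is injective because projections are unique'' does not follow. Uniqueness of the nearest point of $y+t\mathbf n(y)$ does not say that this nearest point is $y$.

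\textbf{What is missing, and how to supply it.} You need the global fact that normal segments shorter than the reach are minimizing: $\pi_\Sigma(y+t\mathbf n(y))=y$ for all $y\in\Sigma$ and $|t|<\rho_\Omega$. This is Federer's theorem on sets of positive reach. In your setting you can prove it by invariance of domain.
\begin{enumerate}[(a)]
\item Since $\Phi\circ\Psi=\mathrm{id}$, the map $\Psi=(\pi_\Sigma,d)$ is a continuous injection of $U_{\rho_\Omega}$ into the $n$-manifold $\Sigma\times(-\rho_\Omega,\rho_\Omega)$. By invariance of domain, its image $\mathcal G=\{(w,t):\pi_\Sigma(w+t\mathbf n(w))=w\}$ is open.
\item $\mathcal G$ is closed in $\Sigma\times(-\rho_\Omega,\rho_\Omega)$, by continuity of $\pi_\Sigma$.
\item Each fibre $\{t:(w,t)\in\mathcal G\}$ is an interval containing $0$. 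Being open and closed in $(-\rho_\Omega,\rho_\Omega)$, it is the whole interval, so $\mathcal G=\Sigma\times(-\rho_\Omega,\rho_\Omega)$.
\end{enumerate}
With this in hand, your second-order expansion directly contradicts $\pi_\Sigma(z_s)=y$ whenever $s|\kappa_i(y)|>1$. That gives the Jacobian bound. Injectivity of $\Phi$ follows from $\Psi\circ\Phi=\mathrm{id}$ on $\Sigma\times(-\rho_\Omega,\rho_\Omega)$, and the rest of your argument then goes through.

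\textbf{What your argument proves as written.} Without this step you obtain $C^2$ regularity only on $U_\mu$, where $\mu$ is your two-sided ball radius. There $\mu\le 1/K_\Sigma$, and the ball condition itself gives $\pi_\Sigma(y+t\mathbf n(y))=y$. This is essentially the content of the Gilbarg--Trudinger lemma the paper cites for (i). Passing from $\mu$ to $\mathrm{reach}(\Sigma)$ is exactly the missing piece. Your treatment of (ii) via compactness of $\Sigma/\Z^n$ matches the paper's.
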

\begin{proof} The conclusion of (i) follows from \cite[Lemma 14.6]{gilbarg_elliptic_2001}.
If $\Omega\subset \T^n$ is periodic, the signed distance function $d(x,\partial\Omega)$ is still locally $C^2$, and the existence of $\rho>0$ follows by working on the quotient domain $\T^n$ and lifting back to $\R^n$, as in \cite[Lemma B.2]{han_tu_quantitative_2025}.
\end{proof}

In what follows, we focus on the periodic case \(\Omega+\mathbb{Z}^n=\Omega\); the bounded-domain case can be treated similarly and is, in fact, simpler. We identify \(\Omega\) with its quotient image
\[
\pi(\Omega)\subset\mathbb T^n=\mathbb R^n/\mathbb Z^n,
\]
and similarly identify \(\Sigma=\partial\Omega\) with its image in \(\mathbb T^n\).
For every $x\in U_{\rho_\Omega}$, the projection $x_0 = \pi_{\partial\Omega}(x)$ is unique, $x = x_0-s\nu(x_0)$ with $s=d(x)$, and
\begin{align}
    & d(x_0-s\nu(x_0)) = s,
    \qquad
    Dd(x) = -\nu(x_0),
    \qquad
    -\nu(x_0) = \mathbf{n}(x_0) = Dd(x_0).
    \label{eq:Ddx}
\end{align}

\subsection{Curvatures}

For $x\in \partial \Omega$, the tangent space $T_x\Sigma$ is $T_x\Sigma = \{v\in \R^n: v\cdot \nu(x) = 0\}$, and the shape operator $S_x: T_x\Sigma \to T_x\Sigma$ is defined by
\begin{equation}\label{eq:Sx}
    S_x(\xi) = D_\xi\nu(x) = D\nu(x)\cdot \xi = -D^2d(x)\xi, \qquad \xi \in T_x\Sigma.
\end{equation}
For $\xi\in T_x\Sigma$, it is clear that $D_\xi \nu(x) \in T_x\Sigma$ for $x\in \partial\Omega$, as it follows from differentiating $|\nu(x)|^2=1$ in the direction of $\xi$.

\begin{defn}[Principal curvatures]\label{defn:KSigma}
    The linear operator $S_x:T_x\Sigma\to T_x\Sigma$ is self-adjoint with respect to the inner product in $T_x\Sigma$.
    The principal curvatures \(\kappa_1(x),\ldots,\kappa_{n-1}(x)\) of \(\partial\Omega\) at \(x\) are defined as the eigenvalues of \(S_x\), with corresponding orthonormal
    principal directions \(\tau_1(x),\ldots,\tau_{n-1}(x)\) as eigenvectors. We define
    \begin{align}
        K_\Sigma(x) &= \max\{|\kappa_i(x)|: i=1,\ldots, n-1\}, \qquad
        K_{\Sigma} = \sup_{x\in \partial\Omega} |K_\Sigma(x)|
        \label{eq:K2norm}
    \end{align}
\end{defn}

\begin{lem}\label{lem:formulaSigma} Let $\Omega\subset\T^n$ be a domain with $C^2$ boundary, and $\rho = \mathrm{reach}(\Sigma)$.  For $x\in \Sigma$ we have
\begin{equation}\label{eq:boundDSigma}
\begin{aligned}
    K_\Sigma(x)
        = \Vert D^2d(x)\Vert_{\mathrm{op}}
    = \Vert S_x\Vert_{\mathrm{op}},
    \qquad
    K_\Sigma
        = \Vert D^2d\Vert_{\mathrm{op}} = \sup_{x\in \partial\Omega} \Vert D^2d(x)\Vert_{\mathrm{op}}.
\end{aligned}
\end{equation}
Furthermore, for $x\in U_{\rho}$ with $s=d(x)$, we have
\begin{equation}\label{eq:D2dx}
    D^2d(x)\tau_i = \frac{-\kappa_i}{1-s\kappa_i} \tau_i.
\end{equation}
Consequently, for $0<r<\rho$ such that $rK_\Sigma < 1$, we have
\begin{equation}\label{eq:D2dxMain}
    \sup_{x\in U_{r}}\Vert D^2d(x)\Vert_{\mathrm{op}} \leq \frac{K_\Sigma}{1-rK_\Sigma}.
\end{equation}
\end{lem}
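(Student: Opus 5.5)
The plan is to work in the tubular neighborhood $U_\rho$ and use Fermi coordinates $x = x_0 - s\nu(x_0)$, with $x_0\in\Sigma$ and $s=d(x)\in(-\rho,\rho)$. The first claim, \eqref{eq:boundDSigma}, is essentially a restatement of \eqref{eq:Sx}. By \eqref{eq:Ddx}, $Dd = -\nu\circ\pi_\Sigma$ on $U_\rho$, so $Dd$ is constant along normal lines, giving $D^2d(x_0)\nu(x_0)=0$. Indeed, differentiating $|Dd|^2=1$ gives $D^2d\,Dd=0$. Hence $D^2d(x_0)$ annihilates the normal direction. On $T_{x_0}\Sigma$ it equals $-S_{x_0}$ and maps $T_{x_0}\Sigma$ to itself. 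Since $S_{x_0}$ is self-adjoint with eigenvalues $\kappa_i(x_0)$ and orthonormal eigenvectors $\tau_i$, we get $\|D^2d(x_0)\|_{\mathrm{op}}=\|S_{x_0}\|_{\mathrm{op}}=\max_i|\kappa_i|=K_\Sigma(x_0)$. Taking the supremum over $x_0\in\Sigma$ gives the second identity, provided the supremum on the right is understood over $\partial\Omega$.

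For \eqref{eq:D2dx}, fix $x\in U_\rho$ with projection $x_0$ and $s=d(x)$. Differentiate the identity $Dd(x_0 - s\nu(x_0)) = -\nu(x_0)$ along a curve $\gamma(t)\subset\Sigma$ with $\gamma(0)=x_0$ and $\dot\gamma(0)=\tau_i$, keeping $s$ fixed. By the chain rule the left side gives $D^2d(x)\bigl(\tau_i - sD\nu(x_0)\tau_i\bigr)=(1-s\kappa_i)D^2d(x)\tau_i$, using $D\nu(x_0)\tau_i=\kappa_i\tau_i$ by \eqref{eq:Sx}. The right side gives $-\kappa_i\tau_i$. This requires $1-s\kappa_i\neq0$. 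That holds for $|s|<\rho$ by the standard fact that the map $(x_0,s)\mapsto x_0-s\nu(x_0)$ is a $C^1$ diffeomorphism onto $U_\rho$, so $|\kappa_i|\le 1/\rho$ and $|s\kappa_i|<1$. This is exactly the Gilbarg--Trudinger argument of \cite[Lemma 14.17]{gilbarg_elliptic_2001}, transplanted to the torus as in the previous lemma. Dividing by $1-s\kappa_i$ yields \eqref{eq:D2dx}. Together with $D^2d(x)\nu(x_0)=0$, which again follows from $D^2d\,Dd=0$, this diagonalizes $D^2d(x)$ in the orthonormal basis $\{\tau_1,\dots,\tau_{n-1},\nu(x_0)\}$.

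Finally, for \eqref{eq:D2dxMain}, take $0<r<\rho$ with $rK_\Sigma<1$ and $x\in U_r$, so that $|s|<r$. Then $|\kappa_i|\le K_\Sigma$, and
\[
\left|\frac{\kappa_i}{1-s\kappa_i}\right|\le\frac{K_\Sigma}{1-|s|K_\Sigma}\le\frac{K_\Sigma}{1-rK_\Sigma}.
\]
Since the operator is diagonal in an orthonormal basis, its operator norm is the largest such value, and we take the supremum over $U_r$.

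The main obstacle I expect is the careful justification that $d$ is $C^2$ with $Dd=-\nu\circ\pi_\Sigma$ throughout $U_\rho$ in the periodic setting, together with the nondegeneracy $1-s\kappa_i>0$ on all of $U_\rho$, rather than only on a small neighborhood. I would handle this by citing the previous lemma for regularity. For the sign of $1-s\kappa_i$ I would use that an interior or exterior ball of radius $|s|$ touches $\Sigma$ at $x_0$, which forces $s\kappa_i\le 1$; strictness on the open tube then follows because the normal lines still minimize slightly beyond $|s|$.
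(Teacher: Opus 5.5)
Your proposal is correct and takes the same route as the paper's omitted proof. There, \eqref{eq:boundDSigma} is read off from the definition \eqref{eq:Sx} (you add $D^2d\,Dd=0$), \eqref{eq:D2dx} is the computation of \cite[Lemma 14.17]{gilbarg_elliptic_2001}, which you carry out by differentiating $Dd(x_0-s\nu(x_0))=-\nu(x_0)$ tangentially, and \eqref{eq:D2dxMain} follows by bounding the diagonal entries. One simplification: the nondegeneracy $1-s\kappa_i\neq 0$ that you flag as the main obstacle is automatic once that identity holds, because $1-s\kappa_i=0$ would force $\kappa_i=1/s\neq 0$ in $(1-s\kappa_i)D^2d(x)\tau_i=-\kappa_i\tau_i$, giving $0=-\kappa_i\tau_i\neq 0$.
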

The proof of Lemma \ref{lem:formulaSigma} is omitted, since \eqref{eq:boundDSigma} follows from the definition, while \eqref{eq:D2dx} follows from \cite[Lemma 14.17]{gilbarg_elliptic_2001}. A direct consequence of \eqref{eq:D2dxMain} is that $|\kappa_i(x)| \leq \frac{1}{\rho}$ for all $i=1,2,\ldots, n-1$ and $x\in \Sigma$, therefore
\begin{equation}\label{eq:reachS}
    K_\Sigma\leq \frac{1}{\mathrm{reach}(\Sigma)}.
\end{equation}

\begin{lem}\label{lem:Kgi} Assume that $\Omega+\Z^n=\Omega$ and there exists $g\in C^2(\T^n)$  with $Dg(x)\neq 0$ for $x\in \Sigma$, so that $\Sigma = \{x\in \T^n: g(x) = 0\}$ and $\Omega = \{x\in \T^n: g(x)<0\}$. Then
\begin{equation}\label{eq:bound2normLevelSet}
    K_\Sigma \leq \sup_{x\in \Sigma}\frac{\Vert D^2g(x)\Vert_{\mathrm{op}}}{\inf_{x\in \Sigma} |Dg(x)|}.
\end{equation}
\end{lem}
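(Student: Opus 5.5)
The idea is to express the shape operator through derivatives of $g$, using the unit normal $\nu = Dg/|Dg|$ on $\Sigma$. Since $\Omega=\{g<0\}$, the gradient $Dg$ points out of $\Omega$, so this $\nu$ is the outward unit normal. The map $x\mapsto Dg(x)/|Dg(x)|$ is $C^1$ in a neighborhood of $\Sigma$, because $Dg\neq 0$ on $\Sigma$ and $g\in C^2$. Hence it is a $C^1$ extension of $\nu$ off $\Sigma$. The shape operator $S_x(\xi)=D_\xi\nu(x)$ for $\xi\in T_x\Sigma$ depends only on the values of $\nu$ along $\Sigma$ in tangential directions, so it may be computed from this extension.

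First, differentiate the quotient. For $\xi\in T_x\Sigma$,
\begin{equation*}
D_\xi\Big(\frac{Dg}{|Dg|}\Big)(x) = \frac{D^2g(x)\xi}{|Dg(x)|} - \frac{Dg(x)\,\big(Dg(x)\cdot D^2g(x)\xi\big)}{|Dg(x)|^3}
= \frac{1}{|Dg(x)|}\big(I-\nu(x)\otimes\nu(x)\big)D^2g(x)\xi .
\end{equation*}
Thus $S_x = |Dg(x)|^{-1}P_x D^2g(x)|_{T_x\Sigma}$, where $P_x$ is the orthogonal projection onto $T_x\Sigma$. Since $\|P_x\|_{\mathrm{op}}\le 1$, this gives
\begin{equation*}
K_\Sigma(x)=\|S_x\|_{\mathrm{op}}\le \frac{\|D^2g(x)\|_{\mathrm{op}}}{|Dg(x)|}.
\end{equation*}
Here $K_\Sigma(x)=\|S_x\|_{\mathrm{op}}$ holds because $S_x$ is self-adjoint with eigenvalues $\kappa_i(x)$ (Definition~\ref{defn:KSigma} and \eqref{eq:boundDSigma}). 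Note that $K_\Sigma(x)$ involves absolute values of the $\kappa_i$, so the sign convention for $\nu$ is irrelevant.

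Next, take the supremum over $x\in\Sigma$:
\begin{equation*}
\frac{\|D^2g(x)\|_{\mathrm{op}}}{|Dg(x)|}\le \frac{\sup_{\Sigma}\|D^2g\|_{\mathrm{op}}}{\inf_\Sigma |Dg|}.
\end{equation*}
The infimum in the denominator is positive, since $\Sigma$ is compact in $\T^n$ and $|Dg|>0$ on it by continuity. Periodicity of $g$ ensures these quantities are computed on the compact quotient, so the bound is finite and \eqref{eq:bound2normLevelSet} follows.

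The only delicate point is justifying that $S_x$ may be computed using an arbitrary $C^1$ extension of $\nu$. This holds because $D_\xi\nu(x)$ for tangent $\xi$ equals $\frac{d}{dt}\nu(\gamma(t))|_{t=0}$ along any $C^1$ curve $\gamma$ in $\Sigma$ with $\gamma'(0)=\xi$. Everything else is routine calculus.
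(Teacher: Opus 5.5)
Your proposal is correct and follows essentially the same route as the paper: you differentiate $\nu = Dg/|Dg|$ along tangent directions and bound the result by $\Vert D^2g(x)\Vert_{\mathrm{op}}/|Dg(x)|$, where your projection $P_x$ onto $T_x\Sigma$ plays the role of the paper's pairing against unit tangent vectors $\eta$. Your added justifications are sound and harmless: that the shape operator may be computed from any $C^1$ extension of $\nu$, and that the infimum of $|Dg|$ over the compact set $\Sigma$ is positive.
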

\begin{proof}[Proof of Lemma \ref{lem:Kgi}] Let $x\in \Sigma$.
Since $Dg(x)\neq 0$, we have
\begin{equation*}
    \nu(x) = \frac{Dg(x)}{|Dg(x)|} \quad \Longrightarrow\quad
    D_\xi\nu(x) = \frac{D^2g(x)\xi}{|Dg(x)|} - \frac{Dg(x)}{|Dg(x)|^3}\big \langle Dg(x), D^2g(x)\xi\big\rangle
\end{equation*}
for $\xi\in T_x\Sigma$.
We have $\langle \nu, \eta\rangle = \langle Dg(x), \eta\rangle =0$ for all $\eta\in T_x\Sigma$. Therefore
\begin{align*}
    -\langle D^2d(x)\xi, \eta \rangle
    =
    \langle D_\xi\nu(x), \eta\rangle  = \frac{\langle D^2g(x)\xi, \eta\rangle}{|Dg(x)|} , \qquad \eta\in T_x\Sigma.
\end{align*}
Taking supremum over all nonzero tangent vectors $\eta \in T_x\Sigma$ with $|\eta|=1$, we have
\begin{equation*}
    \sup_{|\eta|=1} |\langle D^2d(x)\xi, \eta\rangle|
    =
    \sup_{|\eta|=1} |\langle D_\xi\nu(x), \eta\rangle| \leq \frac{| D^2g(x)\xi|}{|Dg(x)|} \leq \frac{\Vert D^2g(x)\Vert_{\mathrm{op}}|\xi|}{|Dg(x)|}.
\end{equation*}
Therefore, the conclusion follows.
\end{proof}

\subsection{Approximation of $C^1$ domains}

\begin{lem}\label{lem:gC1}
Let $\Omega$ satisfy \ref{itm:D1}. There exists a function $g\in C^1(\T^n)$, $a>0$, and $c_0>0$ such that $\Omega = \{x\in \T^n: g(x) < 0\}$, $\partial\Omega = \{x\in \T^n: g(x)=0\}$, and
\begin{align}\label{eq:DgU}
    |Dg(x)| \geq c_0 \qquad\text{for all}\qquad x\in U_a = \{x\in \T^n: \mathrm{dist}(x,\partial\Omega) < a\}.
\end{align}
\end{lem}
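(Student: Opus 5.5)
The plan is to build $g$ from local $C^1$ graph representations of $\partial\Omega$, glued by a partition of unity on the compact torus, and then extend it away from the boundary by a sign-consistent function.

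\emph{Local defining functions.} Since $\partial\Omega\subset\T^n$ is compact and $C^1$, each $x_0\in\partial\Omega$ has a neighborhood $V_{x_0}$ and a rotation after which
\[
\Omega\cap V_{x_0}=\{x_n<\psi(x')\}\cap V_{x_0}
\]
with $\psi\in C^1$. On $V_{x_0}$ set $g_{x_0}(x)=x_n-\psi(x')$, written in the original coordinates via the rotation. Then $g_{x_0}\in C^1$, $\{g_{x_0}<0\}=\Omega\cap V_{x_0}$, and $|Dg_{x_0}|\ge 1$. Moreover $Dg_{x_0}(x)/|Dg_{x_0}(x)|$ is the outward normal $\nu$ at boundary points, which is continuous on $\partial\Omega$.

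\emph{Gluing.} Cover $\partial\Omega$ by finitely many such sets $V_1,\dots,V_N$, and add $V_0=\Omega\setminus\overline{U_{a_0}}$ and $V_{N+1}=\T^n\setminus(\overline\Omega\cup\overline{U_{a_0}})$ for small $a_0$. Take a smooth partition of unity $\{\chi_i\}$ subordinate to this cover of $\T^n$, and set $g_0\equiv-1$, $g_{N+1}\equiv 1$, so that
\[
g=\sum_i \chi_i g_i\in C^1(\T^n).
\]
At every point each $g_i$ carrying a nonzero weight has the same sign, namely negative in $\Omega$, zero on $\partial\Omega$ and positive outside $\overline\Omega$. Hence $\Omega=\{g<0\}$ and $\partial\Omega=\{g=0\}$.

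\emph{Gradient bound.} On $\partial\Omega$ every $g_i$ with $\chi_i\neq 0$ vanishes, so
\[
Dg=\sum_i \chi_i Dg_i=\Big(\sum_i \chi_i|Dg_i|\Big)\nu .
\]
Each $Dg_i$ is a positive multiple of $\nu$ there, giving $|Dg|\ge 1$ on $\partial\Omega$. This step is the main obstacle: the key point is that all local gradients are positively aligned with $\nu$, so no cancellation occurs in the sum. Finally, $Dg$ is continuous and $\partial\Omega$ is compact in $\T^n$, so uniform continuity gives $a>0$ with $|Dg|\ge c_0:=1/2$ on $U_a$. Lifting $g$ periodically to $\R^n$ yields the stated $\Z^n$-periodic function.
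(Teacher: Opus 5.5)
Your proposal is correct and follows essentially the paper's route: local graph defining functions $x_n-\psi(x')$ are glued by a partition of unity, the boundary gradient is a positive combination of outward normals (so no cancellation occurs), and compactness plus continuity pass the bound from $\partial\Omega$ to $U_a$. The only difference is that you build the extension away from the boundary into the partition of unity, using the constant pieces $g_0\equiv-1$ and $g_{N+1}\equiv 1$ (with $a_0$ chosen so that $\overline{U_{a_0}}\subset V_1\cup\cdots\cup V_N$). This makes explicit the paper's final step of extending $g$ off $U_a$ with the correct signs.
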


\begin{proof}
We write a generic point $x = (x',x_n) \in \R^{n-1}\times \R$; we use a similar argument to \cite[Lemma B.2]{han_tu_quantitative_2025} to lift from the torus and the whole space.
We can cover $\partial\Omega$ by finitely many coordinate open neighborhoods $U_j\subset \T^n$ for $j=1,2,\ldots,m$ such that, upon relabeling and reorienting the coordinate axes if necessary, we have (see \cite[Appendix C]{evans_partial_2010})
\begin{equation*}
    \Omega\cap U_j = \{(x',x_n): x_n < \varphi_j(x')\}, \qquad \varphi_j\in C^1(\R^{n-1};\R), \qquad j = 1,\ldots, m.
\end{equation*}
Let $f_j(x) = f_j(x',x_n) = x_n - \varphi_j(x')$ for $x\in U_j$. We have $\Omega\cap U_j  =\{x\in \T^n: f_j(x)<0\}$, $\partial \Omega\cap U_j = \{x\in \T^n: f_j(x) = 0\}$, and
\begin{align}
    Df_j(x) &= (-D\varphi_j(x'), 1)  \neq 0, \qquad && x\in U_j, \label{eq:Ui}\\
    Df_j(x) &= (-D\varphi_j(x'), 1) = \nu(x)\sqrt{|D\varphi_j(x')|^2+1}  \neq 0, \qquad && x\in U_j\cap \partial\Omega, \label{eq:pointtingv}
\end{align}
where $\nu(x)$ is the unit outward normal to $\partial\Omega$ at $x\in \partial\Omega$.
Choose a $C^1$ partition of unity $\{\chi_j\}_{j=1}^m$ subordinate to $\{U_j\}_{j=1}^m$, we define a function in a neighborhood of $\partial\Omega$ by
\begin{equation*}
    g(x) = \sum_{j=1}^m \chi_j(x) f_j(x), \qquad x \in \bigcup_{j=1}^m U_j.
\end{equation*}
At any point $x\in \partial\Omega$, it is clear that $Dg(x)\neq 0$ thanks to \eqref{eq:pointtingv}. By compactness of $\partial\Omega$, we have $\min_{x \in \partial\Omega} |Dg(x)| > 0$.
By continuity and the compactness of $\partial\Omega$, there exists $a>0$ such that \eqref{eq:DgU} holds. Finally, extend $g$ away from $U_a$ so that $g<0$ in $\Omega\backslash U_a$ and $g>0$ in $\T^n\backslash (\overline{\Omega}\cup U_a)$. This gives us the desired function $g$.
\end{proof}

Let $g\in C^1(\T^n)$ be the defining function of $\Omega$
given by Lemma \ref{lem:gC1}. Choose $g_h\in C^2(\T^n)$ for $h>0$
such that
\begin{equation}\label{eq:ghapprox}
    \lim_{h\to 0}\|g_h-g\|_{C^1(\T^n)}=0.
\end{equation}
Define $\Omega_h=\{x\in\T^n:g_h(x)<0\}$ and $\Sigma_h=\partial\Omega_h$. We denote by $d_{\Sigma_h}$ the distance defined in \eqref{eq:distanceSigma}.
For $h=0$, we set $g_0=g$, $\Omega_0=\Omega$, and $\Sigma_0=\Sigma=\partial\Omega$.

We show below that the intrinsic distance $d_\Sigma$ on the boundary, defined in \eqref{eq:distanceSigma}, is comparable to the Euclidean distance for sufficiently close points. The distance threshold and the comparability constant can be chosen uniformly for the approximating domains $\Omega_h$.

\begin{lem}\label{lem:S1}
     Let $\Omega$ satisfy \ref{itm:D1}, with defining function $g\in C^1(\T^n)$, and assume \eqref{eq:ghapprox}. There exist $r_0,C_0>0$ depending only on $\Sigma=\partial\Omega$, and $h_0>0$ which may also depend on the approximating family $\{g_h\}$, such that
    \begin{equation}\label{eq:Dis}
        d_{\Sigma_h}(x,y) \leq C_0|x-y|, \qquad  x,y\in \Sigma_h, |x-y|<r_0 \quad \text{and}\quad h\in [0,h_0).
    \end{equation}
\end{lem}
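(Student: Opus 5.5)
The plan is to represent each $\Sigma_h$, near a base point, as a graph over a fixed hyperplane, with Lipschitz constant controlled uniformly in $h$. Then I will join $x$ and $y$ by the lifted straight segment in the graph coordinates.

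\emph{Step 1: uniform local graph charts.} Since $g\in C^1(\T^n)$ and $|Dg|\ge c_0$ on $U_a$ by Lemma~\ref{lem:gC1}, uniform continuity of $Dg$ on $\T^n$ gives a radius $r_1\in(0,a/2)$ with $|Dg(z)-Dg(x_0)|\le c_0/4$ whenever $|z-x_0|\le 2r_1$, for every $x_0\in\Sigma$. By \eqref{eq:ghapprox}, choose $h_0$ such that $\|g_h-g\|_{C^1}\le \min\{c_0/4,\delta\}$ for $h<h_0$. Here $\delta$ is small enough that every zero of $g_h$ lies in $U_{r_1/2}$; this is possible since $|g|$ is bounded below off $U_{r_1/2}$. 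Now fix $h<h_0$ and a point $x_0\in\Sigma_h$, pick $x_0^*\in\Sigma$ with $|x_0-x_0^*|<r_1/2$, and set $e=Dg(x_0^*)/|Dg(x_0^*)|$. Then on $B(x_0,r_1)$ we have $\langle Dg_h,e\rangle\ge c_0/2$ and $|Dg_h-\langle Dg_h,e\rangle e|\le c_0/2+\|Dg\|_\infty\cdot 0$. More carefully, $|Dg_h|\le M:=\|Dg\|_\infty+c_0$. By the implicit function theorem, in coordinates $(x',x_n)$ with $x_n$ along $e$, the set $\Sigma_h\cap B(x_0,r_1)$ is contained in the graph of a $C^1$ function $\psi$ with $|D\psi|\le L:=2M/c_0$. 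This graph is defined on a ball of radius comparable to $r_1/(1+L)$ in the $x'$-variables.

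\emph{Step 2: path construction.} Take $r_0=r_1/(2(1+L))$ and $x,y\in\Sigma_h$ with $|x-y|<r_0$, and center the chart at $x_0=x$. Write $x=(x',\psi(x'))$ and $y=(y',\psi(y'))$; this uses the monotonicity of $g_h$ along $e$, which shows that $\Sigma_h\cap$ (cylinder) is exactly the graph. Define $\gamma(s)=((1-s)x'+sy',\psi((1-s)x'+sy'))$. This curve stays in the chart, lies in $\Sigma_h$, and is $C^1$. Its length satisfies $\int_0^1|\dot\gamma|\le\sqrt{1+L^2}\,|x'-y'|\le\sqrt{1+L^2}\,|x-y|$, so $C_0=\sqrt{1+L^2}$ works. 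The constants $r_0$ and $C_0$ depend only on $c_0$, $\|Dg\|_\infty$, and the modulus of continuity of $Dg$, hence only on $\Sigma$ through its defining function. The threshold $h_0$ depends on the family $\{g_h\}$. The case $h=0$ is identical.

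\emph{Main obstacle.} The delicate point is Step 1: the implicit-function chart must have a size independent of both $h$ and the base point. This requires a quantitative implicit function theorem, which I will carry out by hand. Along each line $x'+te$, the function $t\mapsto g_h$ is strictly increasing, with slope at least $c_0/2$ inside the cylinder $\{|z'-x_0'|<\rho,\ |t|<r_1/2\}$. Since $|g_h|\le M\rho$ on the axis endpoints' shift, $g_h$ changes sign across the cylinder once $\rho\ll r_1 c_0/M$. This gives a unique zero on each line, hence the graph function $\psi$, with Lipschitz bound $|D\psi|=|D'g_h|/\partial_e g_h\le L$.
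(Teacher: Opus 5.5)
Your proposal is correct and follows essentially the same route as the paper. Both build uniform-size graph charts for $\Sigma_h$ by a hand-made implicit function argument (strict monotonicity of $g_h$ along a transversal direction plus the intermediate value theorem in a cylinder), then lift the straight segment $[x',y']$ to the graph; the only differences are cosmetic, e.g.\ you use the normal direction $e=Dg(x_0^*)/|Dg(x_0^*)|$ where the paper uses a coordinate axis with $|\partial_{x_{i_0}}g(x_0)|\ge c_0/\sqrt n$. Do take $r_0=\min\{\rho,r_1/2\}$, with $\rho$ the cylinder radius from your last paragraph, so that $y$ really lies in the chart centered at $x$: your sign-change argument needs $\rho\lesssim c_0r_1/M$, which can be smaller than the $r_1/(2(1+L))$ you wrote.
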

\begin{proof}[Proof of Lemma \ref{lem:S1}] Let $M = 1+\Vert Dg\Vert_{L^\infty(U_{a})}$ where $a$ is defined in Lemma \ref{lem:gC1}. Since $Dg_h\to Dg$ uniformly in $C(\T^n)$, there exists $h_0$ small such that
    \begin{align}\label{eq:modh}
        \Vert g_h-g \Vert _{L^\infty(\T^n)} + \Vert Dg_h - Dg\Vert_{L^\infty(\T^n)} \leq \min\left\lbrace 1, \frac{c_0}{4\sqrt{n}} \right\rbrace \qquad\text{for}\; 0<h<h_0.
    \end{align}
    Here $c_0$ is from \eqref{eq:DgU}. In particular, $\Vert Dg_h\Vert_{L^\infty(U_a)}\leq M$. We have
    \begin{equation}\label{eq:siggma1}
        \inf_{x\in U_a}|Dg_h(x)| \geq \frac{3}{4}c_0 \qquad\text{for}\; 0<h<h_0.
    \end{equation}
    Since $g\in C^1(\T^n)$, there exists $0<r_0<a$ such that
    \begin{equation}\label{eq:moddistance}
        |Dg(x) - Dg(y)| < \frac{c_0}{2\sqrt{n}} \qquad\text{whenever}\;x,y\in \T^n\;\text{with}\;|x-y| < r_0.
    \end{equation}
    By shrinking $h_0$, we assume $\Sigma_h \subset U_{r_0/2}$ for all $0<h<h_0$.
    From \eqref{eq:siggma1} we obtain
    \begin{equation}\label{eq:BoundghBelow}
        \inf_{x\in \Sigma_h} |Dg_h(x)| \geq \frac{3}{4}c_0 \qquad \text{for}\; 0<h<h_0.
    \end{equation}

\noindent
{\it\underline{Step 1.}} For every $x_0\in\Sigma$, there exists $i_0=i_0(x_0)\in\{1,\ldots,n\}$ such that,
    if $h\in (0,h_0)$ then
    \begin{equation}\label{eq:neighborhood}
        |\partial_{x_{i_0}} g_h(x)| \geq \frac{c_0}{4\sqrt{n}}, \qquad x\in B_{r_0}(x_0).
    \end{equation}
    Indeed, fix $x_0\in \Sigma$. We have $|Dg(x_0)|^2 = \sum_{j=1}^n |\partial_{x_j}g(x_0)|^2 \geq c_0^2$. Thus there exists $i_0\in \{1,\ldots, n\}$ such that
     \begin{align*}
        \qquad\qquad |\partial_{x_{i_0}} g(x_0)| \geq \frac{|Dg(x_0)|}{\sqrt{n}} \geq \frac{c_0}{\sqrt{n}}
        \quad \Longrightarrow\quad
        |\partial_{x_{i_0}} g(x)| \geq \frac{c_0}{2\sqrt{n}}, \quad x\in B_{r_0}(x_0)
    \end{align*}
    thanks to \eqref{eq:moddistance}. Using \eqref{eq:modh} we deduce that
    if $h\in (0,h_0)$ then
    \begin{equation*}
        |\partial_{x_{i_0}} g_h(x)| \geq \frac{c_0}{4\sqrt{n}}, \qquad x\in B_{r_0}(x_0).
    \end{equation*}
 For simplicity, let us define
    \begin{equation*}
        \kappa = \frac{c_0}{4\sqrt{n}}, \qquad \tau = \frac{r_0}{2\sqrt{2}}, \qquad \rho = \min \left\{ \frac{\kappa\tau}{2M}, \tau\right\}.
    \end{equation*}

\noindent
{\it\underline{Step 2.}} For $z\in\Sigma_h\subset U_{r_0/2}$, choose $x_0\in\Sigma$ such that $|z-x_0|<r_0/2$. Then $B_{r_0/2}(z)\subset B_{r_0}(x_0)\subset U_{r_0}$. By relabeling the coordinate axes, we assume $i_0(x_0)=n$ and write $z=(z',z_n)$. We define
    \begin{equation*}
        V_{z} = B^{n-1}_\rho(z')\times \left(z_n-\tau, z_n +\tau\right) \subset B_{r_0/2}(z) \subset U_{r_0}.
    \end{equation*}
    This is true since $\rho^2+\tau^2 \leq 2\tau^2 = \frac{r_0^2}{4}$.
    Therefore
    \begin{equation}\label{eq:psso}
        |\partial_{x_n} g_h(x)| \geq \kappa \qquad\forall\; x\in V_z.
    \end{equation}
    Without loss of generality, we assume $\partial_{x_n} g_h(x) \geq \kappa$ for $x\in V_z$.
    Since $g_h(z', z_n) = 0$, we have
    \begin{align*}
        & g_h(z', z_n+\tau)
        =
        + \int_0^\tau\partial_{x_n} g_h(z',z_n + s) \;ds
        \geq \kappa \tau \\
        & g_h(z', z_n-\tau)
        =
        - \int_0^\tau\partial_{x_n} g_h(z',z_n - s) \;ds
        \leq -\kappa \tau.
    \end{align*}
    On the other hand, for $x'\in B^{n-1}_\rho(z')$ we have
    \begin{align*}
        |g_h(x',z_n\pm \tau) - g_h(z',z_n\pm \tau)| \leq M|x'-z'| \leq M\rho \leq \frac{\kappa \tau}{2}.
    \end{align*}
    Therefore
    \begin{align*}
        g_h(x', z_n+\tau)  \geq \frac{\kappa \tau}{2} \qquad\text{and}\qquad g_h(x',z_n-\tau) \leq -\frac{\kappa \tau}{2}.
    \end{align*}
    By the intermediate value theorem and \eqref{eq:psso}, there exists a unique value $\varphi_h(x') \in (z_n-\tau, z_n+\tau)$ such that
    \begin{equation*}
        g_h(x', \varphi_h(x')) = 0.
    \end{equation*}
    By the implicit function theorem, $\varphi_h\in C^1$ locally near $x' \in B^{n-1}_\rho(z')$, and
    \begin{equation*}
        D\varphi_h(x') = -\frac{D_{x'}g_h(x',\varphi_h(x'))}{\partial_{x_n}g_h(x', \varphi_h(x'))}.
    \end{equation*}
    Since the value of $y$ such that $g_h(x',y) = 0$ is unique, we obtain a uniquely defined function $\varphi_h: B^{n-1}_\rho(z') \to (z_n-\tau,z_n+\tau)$
    such that
    \begin{align*}
        \Sigma_h\cap V_{z} =\left\lbrace (x', \varphi_h(x')): x'\in B^{n-1}_\rho(z') \right\rbrace,
    \end{align*}
    and
    \begin{equation*}
        \Vert D\varphi_h\Vert_{L^\infty\left(B^{n-1}_\rho(z')\right)} \leq \frac{1+M}{\kappa} = \frac{4\sqrt{n}(1+M)}{c_0}.
    \end{equation*}

\noindent
{\it\underline{ Step 3.}}
Let
    \begin{equation*}
        \tilde{r} = \min\{\rho, \tau\}.
    \end{equation*}
    Then if $x=(x',x_n)\in \R^{n-1}\times \R$ and $y=(y',y_n)\in \R^{n-1}\times \R$ satisfy
    \begin{equation*}
        x,y\in \Sigma_h,
        |x-y| < \tilde{r} \quad\Longrightarrow\quad x,y\in V_x = B^{n-1}_\rho(x') \times (x_n-\tau, x_n+\tau).
    \end{equation*}
    By the construction in the previous step, there exists a $C^1$ function $\varphi_h$ such that
    \begin{equation*}
        \Sigma_h \cap B^{n-1}_\rho(x') \times (x_n-\tau, x_n+\tau) = \{(z',\varphi_h(z')): z'\in B^{n-1}_\rho(x')\}.
    \end{equation*}
    We have
\begin{equation*}
    |x_{n}-y_n| = |\varphi_h(x') - \varphi_h(y')| \leq \Vert D\varphi_h\Vert_{L^\infty\left(B^{n-1}_\rho(x')\right)}|x'-y'|.
\end{equation*}
Let $w = y-x = (w',w_n)$.
Let $\gamma:[0,1]\to \Sigma_h$ be a $C^1$ curve such that $\gamma(0)=x$ and $\gamma(1) = y$ defined by
\begin{equation*}
    \gamma(s) = \big(x'+sw', \varphi_h(x'+sw')\big), \qquad 0\leq s\leq 1.
\end{equation*}
Since $|w'|\leq |x-y|$, we obtain
\begin{align*}
    |\dot{\gamma}(s)|
    &= |(w', D\varphi_h(x'+sw')\cdot
     w')| \\
    &\leq |w'|\left( 1+\Vert D\varphi_h\Vert^2_{L^\infty(B^{n-1}_\rho(x'))}\right)^{1/2}
    \leq C_0|x-y|, \qquad C_0=
    \left(1 + \frac{16n(1+M)^2}{c_0^2}\right)^{1/2}.
\end{align*}
Using
\begin{align*}
    d_{\Sigma_h}(x,y) \leq \mathrm{length}(\gamma) = \int_0^1 |\dot{\gamma}(s)|\;ds \leq C_0|x-y|.
\end{align*}
we obtain \eqref{eq:Dis} after replacing $r_0$ by $\min\{r_0,\tilde r\}$ and shrinking $h_0$ if necessary.
\end{proof}

\begin{lem} Let $\Omega$ satisfy \ref{itm:D1}, with defining function $g\in C^1(\T^n)$, and assume \eqref{eq:ghapprox}.
     Let $r_0,h_0,C_0$ be defined as in Lemma \ref{lem:S1}. We have
    \begin{equation}\label{eq:normalSigmah}
        |\nu_h(x) - \nu_h(y)| \leq C_0 K_{\Sigma_h}|x-y|,  \qquad x,y\in \Sigma_h, |x-y|<r_0, \quad h \in (0,h_0).
    \end{equation}
\end{lem}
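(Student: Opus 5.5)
The idea is to integrate the derivative of the unit normal along an intrinsic path in $\Sigma_h$ joining $x$ to $y$, and then use Lemma~\ref{lem:S1} to compare intrinsic and Euclidean distance. Fix $h\in(0,h_0)$ and $x,y\in\Sigma_h$ with $|x-y|<r_0$. By Lemma~\ref{lem:S1}, $d_{\Sigma_h}(x,y)\le C_0|x-y|<\infty$. Hence, for every $\delta>0$ there is a $C^1$ curve $\gamma:[0,1]\to\Sigma_h$ with $\gamma(0)=x$, $\gamma(1)=y$ and
\begin{equation*}
\int_0^1|\dot\gamma(s)|\,ds\le C_0|x-y|+\delta.
\end{equation*}

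Since $g_h\in C^2(\T^n)$ and $Dg_h\neq 0$ on $\Sigma_h$ by \eqref{eq:BoundghBelow}, the outward normal $\nu_h=Dg_h/|Dg_h|$ is $C^1$ in a neighborhood of $\Sigma_h$. Therefore $s\mapsto\nu_h(\gamma(s))$ is $C^1$. Because $\gamma$ stays in $\Sigma_h$, we have $\dot\gamma(s)\in T_{\gamma(s)}\Sigma_h$, so by \eqref{eq:Sx}
\begin{equation*}
\frac{d}{ds}\nu_h(\gamma(s))=D\nu_h(\gamma(s))\dot\gamma(s)=S_{\gamma(s)}\big(\dot\gamma(s)\big).
\end{equation*}
By Lemma~\ref{lem:formulaSigma} (or Definition~\ref{defn:KSigma}), the shape operator is self-adjoint with eigenvalues $\kappa_i$. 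Hence $\Vert S_{\gamma(s)}\Vert_{\mathrm{op}}=K_{\Sigma_h}(\gamma(s))\le K_{\Sigma_h}$.

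We then integrate:
\begin{equation*}
|\nu_h(x)-\nu_h(y)|\le\int_0^1\Vert S_{\gamma(s)}\Vert_{\mathrm{op}}|\dot\gamma(s)|\,ds\le K_{\Sigma_h}\big(C_0|x-y|+\delta\big).
\end{equation*}
Letting $\delta\to0$ gives \eqref{eq:normalSigmah}.

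The only point needing care is that $D\nu_h$ applied to tangent vectors really is the shape operator, so its operator norm is controlled by $K_{\Sigma_h}$ independently of how $\nu_h$ is extended off $\Sigma_h$. This holds because tangential derivatives depend only on the values of $\nu_h$ on $\Sigma_h$. If $K_{\Sigma_h}=\infty$ the estimate is trivial. Note also that the estimate is meaningful without any positive-reach assumption: since $\Sigma_h$ is compact in $\T^n$ and $C^2$, $K_{\Sigma_h}$ is finite.
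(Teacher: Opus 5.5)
Your proof is correct and takes essentially the same route as the paper: integrate the derivative of the normal along a $C^1$ curve in $\Sigma_h$, bound it by $K_{\Sigma_h}$ times the curve's length, and then invoke Lemma~\ref{lem:S1}. The only difference is cosmetic: the paper extends the normal as $-Dd_h$ and uses $\Vert D^2d_h\Vert_{\mathrm{op}}\leq K_{\Sigma_h}$ on $\Sigma_h$, whereas you extend it as $Dg_h/|Dg_h|$ and observe that tangential derivatives see only the shape operator, which spares you from needing $d_h$ to be $C^2$ near $\Sigma_h$.
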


\begin{proof}
    Let $d_h$ be the signed distance to $\Sigma_h$, positive in
    $\Omega_h$. Then $\nu_h=-Dd_h$ on $\Sigma_h$, and
    $\|D^2d_h(x)\|_{\mathrm{op}}\leq K_{\Sigma_h}$ for $x\in\Sigma_h$,
    where $K_{\Sigma_h}$ denotes the maximum absolute principal
    curvature. Thus, for any $C^1$ curve
$\gamma:[0,1]\to\Sigma_h$ joining $x$ and $y$,
\begin{align*}
    |\nu_h(x)-\nu_h(y)|
    &=|Dd_h(x)-Dd_h(y)|\\
    &\leq \int_0^1
    |\langle D^2d_h(\gamma(s)), \dot\gamma(s)\rangle|\,ds \leq K_{\Sigma_h}\operatorname{length}(\gamma).
\end{align*}
Thanks to \eqref{eq:Dis} we obtain the conclusion.
\end{proof}

\begin{lem}\label{lem:Proj}
     Let $\Omega$ satisfy \ref{itm:D1}, with defining function $g\in C^1(\T^n)$, and assume \eqref{eq:ghapprox}.
     Let $r_0,h_0,C_0$ be defined as in Lemma \ref{lem:S1}. After shrinking $r_0,h_0$ if necessary, we have
    \begin{equation}\label{eq:projSigmah}
        |\mathrm{proj}_{T_x\Sigma_h}(x-y)| \geq \frac{1}{2}|x-y|, \qquad x,y\in \Sigma_h, |x-y|<r_0, \quad h \in (0,h_0).
    \end{equation}
\end{lem}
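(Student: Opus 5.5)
The key point is that the normal $\nu_h(x)=Dg_h(x)/|Dg_h(x)|$ is nearly constant at scale $r_0$, uniformly in $h$. We want to show that $\langle \nu_h(x), x-y\rangle$ is small compared with $|x-y|$. Once we have
\[
|\langle \nu_h(x),x-y\rangle|\le \tfrac{\sqrt3}{2}|x-y|,
\]
the conclusion follows from Pythagoras:
\[
|\mathrm{proj}_{T_x\Sigma_h}(x-y)|^2=|x-y|^2-\langle\nu_h(x),x-y\rangle^2\ge \tfrac14|x-y|^2 .
\]
We deliberately avoid using \eqref{eq:normalSigmah}, since $K_{\Sigma_h}$ need not be bounded uniformly in $h$. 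Instead we rely only on the uniform $C^1$ control of $g_h$ coming from \eqref{eq:ghapprox}.

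First, since $g_h(x)=g_h(y)=0$, the mean value theorem gives
\[
0=g_h(y)-g_h(x)=\int_0^1 \langle Dg_h(x+t(y-x)), y-x\rangle\,dt .
\]
Hence
\[
|\langle Dg_h(x),y-x\rangle|\le \sup_{z\in[x,y]}|Dg_h(z)-Dg_h(x)|\,|x-y|.
\]
Second, this oscillation is controlled uniformly in $h$. For $z\in[x,y]$ we have $|z-x|<r_0$, and
\[
|Dg_h(z)-Dg_h(x)|\le |Dg(z)-Dg(x)|+2\|Dg_h-Dg\|_{L^\infty}.
\]
We now use uniform continuity of $Dg$ on $\T^n$. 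Shrink $r_0$ so that $|Dg(z)-Dg(x)|<\varepsilon$ whenever $|z-x|<r_0$, and shrink $h_0$ so that $\|Dg_h-Dg\|_{L^\infty}<\varepsilon$ for $h<h_0$. Also keep $\Sigma_h\subset U_{r_0/2}$ and $r_0<a$. Then the segment $[x,y]$ lies in $U_a$, although this is not really needed.

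Third, we divide by $|Dg_h(x)|$. By \eqref{eq:BoundghBelow}, $|Dg_h(x)|\ge \tfrac34 c_0$, so
\[
|\langle\nu_h(x),x-y\rangle|\le \frac{3\varepsilon}{\tfrac34 c_0}|x-y|=\frac{4\varepsilon}{c_0}|x-y|.
\]
Choosing $\varepsilon=c_0\sqrt3/8$ gives the bound $\tfrac{\sqrt3}{2}|x-y|$, and \eqref{eq:projSigmah} follows.

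The only delicate point is ensuring that the constants are uniform in $h$. This is exactly why we use the $C^1$ convergence of $g_h$ rather than any curvature bound. All choices depend only on $g$, $c_0$ and the approximating family, which is consistent with the allowed shrinking of $r_0$ and $h_0$.
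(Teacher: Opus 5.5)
Your proof is correct and follows essentially the same route as the paper's. Both use the mean value identity along $[x,y]$, the three-term comparison of $Dg_h$ with $Dg$ via uniform continuity and \eqref{eq:ghapprox}, division by the uniform lower bound \eqref{eq:BoundghBelow}, and Pythagoras. The only difference is bookkeeping: the paper uses the threshold $\kappa=c_0/(4\sqrt n)$ (already supplied by \eqref{eq:modh}) to get $|\nu_h(x)\cdot w|\le |w|/\sqrt n\le \tfrac34|w|$, whereas you tune $\varepsilon=c_0\sqrt3/8$ to reach exactly $\tfrac{\sqrt3}{2}|w|$.
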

\begin{proof}
 By continuity of $Dg$ and convergence of $\Sigma_h$ to $\Sigma$, we may shrink $r_0,h_0$ so that $\Sigma_h\subset U_{r_0/2}$ and
    \begin{equation*}
        |Dg(x)-Dg(y)|\leq\kappa:=\frac{c_0}{4\sqrt n}
        \qquad\text{whenever}\;|x-y|<r_0.
    \end{equation*}
    Let $x,y\in \Sigma_h$ with $|x-y|<r_0$ and $w=y-x$. Since $g_h(x) = g_h(y) = 0$, we have
\begin{align*}
    \int_0^1 {Dg_h(x+tw)}\cdot w\;dt = 0
    \quad \Longrightarrow\quad
    Dg_h(x)\cdot w = \int_0^1 \big(Dg_h(x) - Dg_h(x+tw)\big)\cdot w\;dt.
\end{align*}
By \eqref{eq:modh} and the above choice of $r_0$, we have
\begin{align*}
    & |Dg_h(x) - Dg(x)|  \leq \kappa  \qquad\text{and}\qquad |Dg_h(x+tw) - Dg(x+tw)|  \leq \kappa,  \\
    & |Dg(x) - Dg(x+tw)| \leq \kappa .
\end{align*}
Hence
\begin{align*}
    |Dg_h(x) - Dg_h(x+tw)| \leq 3\kappa
\end{align*}
for all $t\in [0,1]$. We deduce that
\begin{align*}
    \left|Dg_h(x)\cdot w\right|
    \leq
    \int_0^1 |Dg_h(x) - Dg_h(x+tw)|.|w|\;dt \leq 3\kappa|w|.
\end{align*}
Therefore, by \eqref{eq:BoundghBelow} and $n\geq2$, we have
\begin{align*}
    |\nu_h(x)\cdot w| = \frac{|Dg_h(x)\cdot w|}{|Dg_h(x)|}
    \leq \frac{3\kappa|w|}{3c_0/4}
    =\frac{|w|}{\sqrt n}\leq\frac{|w|}{\sqrt2}\leq\frac{3|w|}{4}.
\end{align*}
By the Pythagorean Theorem, we obtain from Figure \ref{fig:tangent} that
\begin{equation*}
    |\mathrm{proj}_{T_x\Sigma_h}(w)|^2 = |w|^2 - |w\cdot \nu_h(x)|^2 \geq |w|^2 \left(1-\frac{9}{16}\right) = \frac{7}{16}|w|^2 \geq \frac{1}{4}|w|^2.
\end{equation*}
Therefore, the conclusion \eqref{eq:projSigmah} follows.
\end{proof}

\begin{figure}[htbp]
    \centering
    \begin{tikzpicture}[
    scale=1.25,
    >=Latex,
    point/.style={circle,fill=black,inner sep=1.7pt}
]
    \coordinate (x) at (0,0);
    \coordinate (y) at (4,1.28);
    \coordinate (q) at (4,0);

    \draw[thick,domain=-1.3:4.6,samples=100,smooth]
        plot (\x,{0.08*(\x)^2})
        node[right] {$\Sigma$};

    \draw[densely dashed,thick] (-1.2,0)--(5,0)
        node[right] {$T_x\Sigma$};

    \node[point,label=below left:$x$] at (x) {};
    \node[point,label={[xshift=-2mm,yshift=1mm]above right:$y$}] at (y) {};
    \node[point,label=below:$q$] at (q) {};

    \draw[->,very thick] (x)--(y)
        node[pos=0.57,above left] {$w=y-x$};

    \draw[->,very thick,blue] (x)--(q)
        node[midway,below]
        {$\mathrm{proj}_{T_x\Sigma}(w)$};

    \draw[->,very thick,red,dashed] (q)--(y)
        node[midway,right]
        {$-w\cdot \nu(x)$};

    \draw[->,thick,red] (x)--(0,-1.3)
        node[midway,left] {$\nu(x)$};

    \draw (3.78,0)--(3.78,0.22)--(4,0.22);

\end{tikzpicture}
    \caption{The projection of $w=y-x$ onto $T_x\Sigma$}
    \label{fig:tangent}
\end{figure}

\begin{lem}\label{lem:approxSigma}
 Let $\Omega$ satisfy \ref{itm:D1}, with defining function $g\in C^1(\T^n)$, and assume \eqref{eq:ghapprox}.
     Let $r_0,h_0,C_0$ be defined as in Lemma \ref{lem:S1}.
There exists $c^*>0$ depending only on $\Sigma$ such that, for all $0<h<h_0$,
    \begin{align*}
         \mathrm{reach}(\Sigma_h)  \geq \frac{c^*}{1+K_{\Sigma_h}},
        \qquad
        c^*=\frac{1}{2}\min\left\{r_0,\frac{1}{C_0}\right\}.
    \end{align*}
\end{lem}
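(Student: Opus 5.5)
Throughout, we fix $h\in(0,h_0)$, write $K=K_{\Sigma_h}$, and set $r=c^*/(1+K)$. The plan is to use the standard characterization of the reach through failure of unique projection. If $\mathrm{reach}(\Sigma_h)<r$, there is a point $z$ with $\delta:=\mathrm{dist}(z,\Sigma_h)<r$ that has two distinct nearest points $x\neq y$ in $\Sigma_h$. Because $\Sigma_h$ is a compact $C^2$ hypersurface in $\T^n$, first-order optimality places $z$ on the normal line at each of these points:
\[
z=x+\delta\,\sigma_x\nu_h(x)=y+\delta\,\sigma_y\nu_h(y),\qquad \sigma_x,\sigma_y\in\{\pm1\}.
\]
Moreover $|x-y|\le 2\delta<2r\le r_0$, so we are in the local regime where Lemma \ref{lem:S1}, \eqref{eq:normalSigmah} and Lemma \ref{lem:Proj} all apply. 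We also use that $\Sigma_h$ separates $\Omega_h$ from its complement: the segments $[z,x)$ and $[z,y)$ contain no point of $\Sigma_h$, so they lie on the same side as $z$. Hence the normals point consistently, and we may take $\sigma_x=\sigma_y=:\sigma$.

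Subtracting the two expressions for $z$ gives the key identity $x-y=\delta\sigma\,(\nu_h(y)-\nu_h(x))$. We then estimate both sides.
\begin{itemize}
\item Projecting onto $T_x\Sigma_h$ and applying Lemma \ref{lem:Proj}, the left side satisfies $|\mathrm{proj}_{T_x\Sigma_h}(x-y)|\ge \tfrac12|x-y|$.
\item Since projection does not increase length, \eqref{eq:normalSigmah} bounds the right side: $|\mathrm{proj}_{T_x\Sigma_h}(\delta(\nu_h(y)-\nu_h(x)))|\le \delta C_0K|x-y|$.
\end{itemize}
Combining the two yields $\tfrac12\le \delta C_0 K< rC_0K\le \tfrac{1}{2C_0}\cdot\frac{C_0K}{1+K}<\tfrac12$ when $K>0$, which is a contradiction. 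If $K=0$, the right side vanishes outright, so $x=y$, again a contradiction. In either case $\mathrm{reach}(\Sigma_h)\ge r$.

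The main obstacle is the rigorous reduction from "reach $<r$" to "a point at distance $<r$ with two nearest points at mutual distance $<r_0$". The first difficulty is that the definition \eqref{eq:defnReach} takes a supremum. If $\mathrm{reach}(\Sigma_h)<r$, some $z$ with $\mathrm{dist}(z,\Sigma_h)<r$ has a non-singleton projection set, and we pick any two of its points. The second is that on $\T^n$ distances must be realized in $\R^n$ by lifting. We use representatives with $|z-x|=|z-y|=\delta<1/2$, which is harmless after shrinking $r_0$ below $1/2$ if needed. The sign consistency of the normals is the remaining delicate point. We handle it as above: the open segment from $z$ to each nearest point avoids $\Sigma_h$, so $z$ and the adjacent side of both points lie in the same component of $\T^n\setminus\Sigma_h$. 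The outward normal then points toward $z$ at both points, or away from it at both.
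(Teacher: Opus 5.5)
Your argument is correct and is essentially the paper's proof. The paper shows that the normal map $\Phi_h(x,t)=x+t\nu_h(x)$ is injective on $\Sigma_h\times(-\rho_h,\rho_h)$, using the same projection onto $T_x\Sigma_h$ combined with Lemma \ref{lem:Proj} and \eqref{eq:normalSigmah}; your two-nearest-points formulation just makes explicit the step from injectivity to uniqueness of the projection. Your sign-consistency step is harmless but unnecessary: projecting onto $T_x\Sigma_h$ kills the $\nu_h(x)$ term whatever its coefficient, so the same estimate already covers $\sigma_x\neq\sigma_y$, and indeed arbitrary $s,t$.
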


\begin{proof}
To show a lower bound for $\mathrm{reach}(\Sigma_h)$, we need to show that, if
\begin{align}\label{eq:rhoc}
     \rho_h= \min \left\lbrace \frac{1}{2C_0(1+K_{\Sigma_h})}, \frac{r_0}{2}\right\rbrace
\end{align}
then every $x\in \T^n$ such that $\mathrm{dist}(x,\Sigma_h) < \rho_h$ has a unique projection to $\Sigma_h$. We do that by considering the map
\begin{align*}
    \Phi_h: \Sigma_h\times (-\rho_h,\rho_h) \to \T^n \qquad\text{that maps}\qquad \Phi_h(x,t) = x + t\nu_h(x).
\end{align*}
We observe that $\Phi_h$ is injective, since if there are $x,y\in \Sigma_h$ with $x\neq y$ such that
        \begin{equation*}
            x+t\nu_h(x) = y+s\nu_h(y)
            \quad \Longrightarrow\quad |x-y| \leq |s|+|t| < 2\rho_h \leq r_0.
        \end{equation*}
        By \eqref{eq:normalSigmah} we have
        \begin{equation*}
            |\nu_h(y) - \nu_h(x)| \leq C_0K_{\Sigma_h}|x-y|.
        \end{equation*}
        Since $y-x = t\nu_h(x) - s\nu_h(y)$ and  $\mathrm{proj}_{T_x\Sigma_h} \nu_h(x) = 0$,
        we have
        \begin{align*}
        \mathrm{proj}_{T_x\Sigma_h}(y-x)
            &=
            s\cdot \mathrm{proj}_{T_x\Sigma_h}(\nu_h(x) - \nu_h(y)).
        \end{align*}
        By \eqref{eq:projSigmah} we have
        \begin{equation*}
        \frac{1}{2}|x-y|
        \leq |\mathrm{proj}_{T_x\Sigma_h}(y-x)| \leq |s|\cdot |\nu_h(y)-\nu_h(x)| < \rho_h\cdot K_{\Sigma_h}C_0|x-y|.
        \end{equation*}
        Since $x\neq y$, we have
        \begin{equation*}
            \frac{1}{2} < \frac{1}{2C_0(1+K_{\Sigma_h})}\cdot K_{\Sigma_h}C_0 < \frac{1}{2},
        \end{equation*}
        due to the choice of $\rho_h$ in \eqref{eq:rhoc}, which is a contradiction. Thus we must have $x=y$ and thus $\Phi_h$ is injective. As a consequence, $\mathrm{reach}(\Sigma_h)\geq \rho_h$, and the proof is complete.
\end{proof}

\section{Properties of the effective Hamiltonian} \label{sec:effect}

We first observe that, without any boundary regularity beyond
\ref{itm:D0}, which ensures comparison, the inf--sup formula
\eqref{eq:infsupformula} implies that $\overline H_\Omega(p)$
is monotone with respect to domain inclusion:
\begin{equation}\label{eq:monotonicityHbar}
    \overline H_\Omega(p)\le \overline H_{\Omega'}(p)
    \qquad\text{whenever }\Omega\subset\Omega'.
\end{equation}

\subsection{General Properties and the Left-Continuity of the Effective Hamiltonians}\label{subsection:GeneralLeftContinuous}

\begin{lem} \label{lem:repLip}
Assume \ref{itm:A1}--\ref{itm:A2} and \ref{itm:D1}. 
There exists $C_1>0$, and for each $R>0$ a constant $C_R>0$ such that
    \begin{equation}\label{eq:boundHbar}
    -C_1 \leq \overline{H}_{\Omega}(p) \leq C_R\qquad\text{for all}\qquad |p|\leq R. 
\end{equation} 
Moreover, there exists a constant $\hat{C}_R$ such that, if $p \in \mathbb{R}^n$ with $|p| \leq R$ and 
$v \in C(\Omega)$ is a viscosity subsolution of
\begin{equation}\label{eq:lipschitz_estimate}
    H(x,\, p + Dv(x)) \le \overline{H}_\Omega(p) \quad \text{in } \Omega,
\end{equation}
then $|\xi|\leq \hat{C}_R$ for all $\xi\in D^+v(x)$ and $x\in \Omega$. Consequently, $v$ is locally Lipschitz in $\Omega$ with $|Dv(x)|\leq \hat{C}_R$ for a.e. $x\in \Omega$, and
$v\in \mathrm{Lip}(\overline{\Omega})$ with a Lipschitz constant $C_\Omega\hat{C}_R$, where $C_\Omega$ depends on $\Omega$. 
\end{lem}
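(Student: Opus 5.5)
The argument has three parts: bounds on $\overline H_\Omega(p)$ from the inf--sup formula \eqref{eq:infsupformula}, a gradient bound from coercivity, and an interior-to-global Lipschitz estimate from the geometry of $\Omega$.

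\emph{Bounds on $\overline H_\Omega(p)$.} For the upper bound, take the test function $\varphi\equiv 0$ in \eqref{eq:infsupformula}. This gives $\overline H_\Omega(p)\le \sup_{y\in\overline\Omega}H(y,p)\le \max_{y\in \T^n,\,|q|\le R}H(y,q)=:C_R$, which is finite by continuity and periodicity \ref{itm:A1}. For the lower bound, every term inside the infimum is at least $-C_1$ by \eqref{eq:H-lower-bd}, so $\overline H_\Omega(p)\ge -C_1$.

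\emph{Gradient bound.} Let $v$ be a viscosity subsolution of \eqref{eq:lipschitz_estimate}, and let $x\in\Omega$, $\xi\in D^+v(x)$. The subsolution test gives $H(x,p+\xi)\le \overline H_\Omega(p)\le C_R$. By coercivity \ref{itm:A2}, there is $L_R$ such that $\inf_y H(y,q)>C_R$ whenever $|q|>L_R$. Hence $|p+\xi|\le L_R$, and so $|\xi|\le L_R+R=:\hat C_R$. The same argument is then valid at any point of $\Omega$ for any superdifferential element.

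Next I pass from superdifferentials to a local Lipschitz bound via the standard fact that a continuous function with $|\xi|\le \hat C$ for all $\xi\in D^+v$ on a ball is Lipschitz there with constant $\hat C$. A direct argument: fix $y$, and compare $v$ with the cone $v(y)+(\hat C+\epsilon)|x-y|$ on a ball $B\subset\Omega$ around $y$. Suppose the cone comparison fails, so the maximum of $v-(\hat C+\epsilon)|\cdot-y|-K|\cdot-y_0|^2$ is attained at an interior point $x\ne y$. A smooth test from above at that point yields an element of $D^+v$ of norm greater than $\hat C$, a contradiction. One could alternatively quote the known equivalence of viscosity subsolutions of $|Dv|\le \hat C$ with Lipschitz functions. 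I do not expect this step to be an obstacle. It gives local Lipschitz continuity, and Rademacher's theorem then yields $|Dv|\le\hat C_R$ a.e.

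\emph{Global Lipschitz constant.} This is the main step. For $x,y\in\Omega$, I join them by a path inside $\Omega$ of length at most $C_\Omega|x-y|$ and integrate the gradient bound along it, which gives $|v(x)-v(y)|\le C_\Omega\hat C_R|x-y|$; $v$ then extends uniquely to $\overline\Omega$. To produce such paths, I use the fact that a $C^1$ periodic domain (connected, with compact boundary in $\T^n$) is quasiconvex. Near the boundary, the local graph representation of Lemma \ref{lem:gC1}, together with the estimate \eqref{eq:Dis} of Lemma \ref{lem:S1} at $h=0$, shows that nearby points can be joined by slightly pushed-in boundary paths of length comparable to $|x-y|$. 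For points far apart, connectedness and compactness of $\overline\Omega$ in $\T^n$ give a uniform bound on the intrinsic diameter, which is comparable to $|x-y|$ once $|x-y|\ge r_0$. Lifting to $\R^n$ is harmless because $v$ is periodic, so only $|x-y|\lesssim\sqrt n$ matters.
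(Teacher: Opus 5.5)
\paragraph{Comparison with the paper.} Your first two steps match the paper's: the bounds come from the inf--sup formula with $\varphi\equiv 0$ and $H\ge -C_1$, and coercivity gives $|\xi|\le\hat C_R$ on $D^+v$. The real difference is the passage to a global Lipschitz bound on $\overline\Omega$.

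\begin{itemize}
\item The paper cites the bounded extension operator $E:W^{1,\infty}(\Omega)\to W^{1,\infty}(\R^n)$ for $C^1$ domains, or Stein's extension for Lipschitz domains.
\item You show that $\Omega$ is quasiconvex and integrate a ball-wise Lipschitz bound along near-optimal paths.
\end{itemize}

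The extension route is a one-line citation. As written, though, it bounds $\|D(Ev)\|_{L^\infty}$ by the full norm $\|v\|_{W^{1,\infty}(\Omega)}$. To reach the homogeneous constant $C_\Omega\hat C_R$ one must still normalize $v$ and bound its oscillation, and also transfer the bounded-domain theorem to the periodic setting.

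Your route gives exactly $|v(x)-v(y)|\le C_\Omega\hat C_R|x-y|$, with $C_\Omega$ the quasiconvexity constant, and it works for any quasiconvex domain. It also handles periodicity explicitly, and your reduction is correct: replacing $y$ by its nearest translate $y+k$ gives $|x-y-k|\le\min\{|x-y|,\sqrt n/2\}$ and $v(y+k)=v(y)$. The price is that the geometric lemma has to be written out.

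Two remarks on that lemma:
\begin{itemize}
\item In a graph chart $\Omega=\{x_n<\varphi(x')\}$, the clean construction is: shift down by $t\approx\mathrm{Lip}(\varphi)|x'-y'|$ in the fixed direction $-e_n$, follow the shifted segment, and come back up. This works better than pushing along the merely continuous normal field. Also, \eqref{eq:Dis}, which concerns paths on $\Sigma$, is not needed.
\item Path integration needs the ball-wise bound $|v(x)-v(y)|\le\hat C_R|x-y|$ on balls in $\Omega$. The a.e.\ bound $|Dv|\le\hat C_R$ alone is not enough, because a curve has measure zero. This is why your cone step matters.
\end{itemize}

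\paragraph{The cone step.} As written, the penalty $-K|\cdot-y_0|^2$ does not yield a contradiction. At the maximum point $x$, the penalty's gradient $2K(x-y_0)$ is not controlled; you only get $|x-y_0|=O(K^{-1/2})$. So it can cancel the cone gradient $(\hat C_R+\epsilon)\frac{x-y}{|x-y|}$.

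Use a radial barrier instead. Take $\overline{B(y,r)}\subset\Omega$ and $L'>\hat C_R$, and maximize
\[
v(x)-L'|x-y|-\frac{\alpha}{r^2-|x-y|^2}
\]
over $B(y,r)$. The maximum is interior. At any maximum point $x\neq y$, the test gradient is a positive multiple of $x-y$ with norm at least $L'$, which contradicts $|\xi|\le\hat C_R$. Hence the maximum is at $y$, and letting $\alpha\to0$ gives the cone bound.

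Alternatively, quote the standard equivalence between continuous viscosity subsolutions of $|Dv|\le\hat C_R$ and functions that are locally Lipschitz with that constant, as you suggest. The paper itself asserts local Lipschitz continuity without proof.
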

\begin{proof}[Proof of Lemma \ref{lem:repLip}]
    By \ref{itm:A1}, $H$ is continuous and periodic in $x$, so there exists $C_R>0$ such that
     \begin{equation}\label{eq:CR}
      -C_1  \leq \sup_{x\in\R^n} H(x,p)\leq C_R \qquad\text{for all}\;|p|\leq R,
    \end{equation}
    where the first inequality comes from \eqref{eq:H-lower-bd}. From \eqref{eq:CR} and the inf-sup formula \eqref{eq:infsupformula}, we obtain \eqref{eq:boundHbar}. 
    Let $v\in C(\Omega)$ be a viscosity solution to $H(x,p+Dv(x))\leq \overline{H}_\Omega(p)$ in $\Omega$. By \eqref{eq:boundHbar} we have $H(x,p+\xi) \leq C_R$ for all $x\in \Omega, \xi\in D^+v(x)$. By the coercivity \ref{itm:A2}, there exists $\tilde{C}_R$ such that
    \begin{equation*}
        |p + \xi| \leq \tilde{C}_R \qquad\Longrightarrow\qquad |\xi|\leq \hat{C}_R:= R+\tilde{C}_R \qquad\text{for all}\; \xi \in D^+v(x), x\in \Omega. 
    \end{equation*}
    Thus $v$ is locally Lipschitz in $\Omega$, and $v$ is     differentiable a.e. in $\Omega$ with $\left\|Dv\right\|_{L^\infty(\Omega)}\leq \hat{C}_R$. 
    If $\partial\Omega$ is $C^1$, by \cite[Theorem 4, Section 5.8]{evans_partial_2010} there exists a constant $C_\Omega$ such that the linear extension map $E: W^{1,\infty}(\Omega)\to W^{1,\infty}(\R^n)$ is linear and bounded with 
    \begin{equation*}
        \Vert Eu \Vert_{W^{1,\infty}(\R^n)}\leq C_\Omega \Vert u \Vert_{W^{1,\infty}(\Omega)}.    
    \end{equation*}
    Hence,
    $v\in \mathrm{Lip}(\overline{\Omega})$ with Lipschitz constant $C_\Omega\hat{C}_R$. 
\end{proof}
\begin{rem}
The proof of the fact that $v\in \mathrm{Lip}(\overline{\Omega})$ in \cite[Theorem 4, Section 5.8]{evans_partial_2010} relies on the fact that for $C^1$ domains, there is a linear extension map $W^{1,\infty}(\Omega)\to W^{1,\infty}(\R^n)$. We note that such an extension map is also available for Lipschitz domains, see ~\cite[Chapter~VI, \S3.4]{stein_singular_1970}. Therefore, the conclusion of Lemma \ref{lem:repLip} is true for Lipschitz domains.
\end{rem}

\begin{lem} \label{lem:representationHother}
Assume \ref{itm:A1}--\ref{itm:A2} and \ref{itm:D1}. For each $p\in \R^n$, we have 
    \begin{equation}\label{eq:inf-Hbar}
        \overline{H}_\Omega(p) = \inf \{c\in \R: \exists\;v\in C(\Omega): H(x,p+Dv(x))\leq c\;\text{in}\;\Omega\}
    \end{equation}
    in the viscosity sense. As a consequence, we have
    \begin{equation}\label{eq:inf-HbarC1C1}
        \overline{H}_\Omega(p) 
        \leq 
        \inf_{\varphi\in C^1(\overline{\Omega})} \max_{x\in \overline{\Omega}} H(x,p+D\varphi(x)) 
        \leq 
        \inf_{\varphi\in C^1(\T^n)} \max_{x\in \overline{\Omega}} H(x,p+D\varphi(x)) . 
    \end{equation}
\end{lem}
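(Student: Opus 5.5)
Denote by $c^*$ the right-hand side of \eqref{eq:inf-Hbar}. The plan is to prove the two inequalities $c^*\le\overline H_\Omega(p)$ and $\overline H_\Omega(p)\le c^*$ separately, and then read off \eqref{eq:inf-HbarC1C1}.

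The inequality $c^*\le\overline H_\Omega(p)$ is immediate. The periodic solution $v$ of the cell problem \eqref{eqn:cell} is continuous and is a subsolution in $\Omega$ with $c=\overline H_\Omega(p)$. So $\overline H_\Omega(p)$ belongs to the admissible set.

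For the reverse inequality, fix $c$ and $v\in C(\Omega)$ with $H(x,p+Dv)\le c$ in $\Omega$ in the viscosity sense. I will show $\overline H_\Omega(p)\le c$. If $c\ge C_R$ there is nothing to prove by \eqref{eq:boundHbar}, so assume $c<C_R$. The argument of Lemma~\ref{lem:repLip} then applies verbatim with $\overline H_\Omega(p)$ replaced by $c$: coercivity bounds the superdifferentials, so $v$ is locally Lipschitz and $v\in\mathrm{Lip}(\overline\Omega)$.

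By convexity of $H$ in $p$, a Lipschitz viscosity subsolution is an a.e. subsolution, so $H(x,p+Dv(x))\le c$ for a.e. $x\in\Omega$. Since $\partial\Omega$ is $C^1$ it has measure zero, and the essential supremum over $\overline\Omega$ coincides with that over $\Omega$.

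The main obstacle is that \eqref{eq:infsupformula} needs a test function in $\mathrm{Lip}(\T^n)$, whereas $v$ is only defined on $\Omega$. It may not be periodic, and it is not defined on the holes. I would handle this with the following trick. Let $w$ be the periodic Lipschitz solution of the cell problem. Suppose, to get a contradiction, that $c<\overline H_\Omega(p)$. Then $v$ is a subsolution of $H\le c<\overline H_\Omega(p)$ in $\Omega$, while $w$ is a state-constraint supersolution on $\overline\Omega$ with value $\overline H_\Omega(p)$. Comparison for the ergodic problem under \ref{itm:D0} should give a contradiction: look at a maximum of $v-w$ over the compact closure $\overline\Omega\cap \overline{Y}$ and use the state-constraint supersolution property at boundary points. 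This is the standard argument behind uniqueness of the ergodic constant, and it needs either a strict inequality via small perturbation or convexity. The step I expect to be delicate is the lack of periodicity of $v$. If the intended setting already takes $v$ periodic, as in the convention identifying $C(\Omega)$ with periodic functions, then one can simply extend $v$ to a $\mathrm{Lip}(\T^n)$ function by the Lipschitz extension and apply \eqref{eq:infsupformula} directly.

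Finally, \eqref{eq:inf-HbarC1C1} follows because any $\varphi\in C^1(\overline\Omega)$ (respectively $C^1(\T^n)$) is a classical, hence viscosity, subsolution with $c=\max_{\overline\Omega}H(x,p+D\varphi)$. Moreover, $C^1(\T^n)\subset C^1(\overline\Omega)$ gives the second inequality.
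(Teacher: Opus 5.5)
Your argument is correct once you commit to the periodic reading, but it follows a different route from the paper's. The paper does not use \eqref{eq:infsupformula} at this point. In your notation, it assumes $c<\overline H_\Omega(p)$ and shifts the cell solution $w$ so that $\max_{\overline\Omega}(v-w)>0$. It then chooses $\lambda>0$ with $\lambda\max\{\|v\|_\infty,\|w\|_\infty\}\le\tfrac12(\overline H_\Omega(p)-c)$. With this choice, $v$ is a subsolution and $w$ a state-constraint supersolution of $\lambda u+H(x,p+Du)=\tfrac12(\overline H_\Omega(p)+c)$, and Soner's comparison principle gives the contradiction $v\le w$. This is exactly the ``small perturbation'' you mention, realized as a discount term.

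Your route instead combines three ingredients and then reads $\overline H_\Omega(p)\le c$ directly off \eqref{eq:infsupformula}:
\begin{enumerate}[(a)]
\item Rademacher's theorem. At a differentiability point $Dv(x)\in D^+v(x)$, so convexity is not actually needed for the a.e.\ inequality.
\item The fact that $|\partial\Omega|=0$.
\item A periodic Lipschitz extension. McShane's $\tilde v(x)=\inf_{y\in\overline\Omega}\{v(y)+\mathrm{Lip}(v)|x-y|\}$ is automatically $\Z^n$-periodic and agrees with $v$ on $\Omega$, so $D\tilde v=Dv$ a.e.\ there.
\end{enumerate}
Your route is shorter and shows that \eqref{eq:inf-Hbar} is essentially the inf--sup formula plus Lipschitz regularity up to the boundary. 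The paper's argument is independent of \eqref{eq:infsupformula}; it needs only the existence of a cell solution and comparison under \ref{itm:D0}.

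One correction: periodicity of $v$ is not a delicate point to work around. It is an essential part of the statement, since the paper identifies elements of $C(\Omega)$ with their $\Z^n$-periodic extensions. Without it the lemma is false. Take $\Omega=\T^n$, $H(x,q)=\tfrac12|q|^2$ and $p\neq 0$; then $v(x)=-p\cdot x$ gives $c=0<\tfrac12|p|^2=\overline H_{\T^n}(p)$. So your comparison sketch cannot be completed for non-periodic $v$, because a maximum of $v-w$ over one cell $\overline\Omega\cap\overline Y$ is not a global maximum. You should drop that hedge and present the periodic argument as the proof.
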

We note that, by Whitney's $C^1$ extension theorem \cite{whitney_analytic_1934}, we have
\begin{equation*}
    \inf_{\varphi\in C^1(\overline{\Omega})} \max_{x\in \overline{\Omega}} H(x,p+D\varphi(x)) 
        = 
        \inf_{\varphi\in C^1(\T^n)} \max_{x\in \overline{\Omega}} H(x,p+D\varphi(x)) . 
\end{equation*}
However, we can also obtain this as a by-product later.

\begin{proof}[Proof of Lemma \ref{lem:representationHother}] Fix $p\in \R^n$. Let
\begin{align}\label{eq:mathcC}
    \mathcal{C} = \{c\in \R: \exists\;\text{a viscosity subsolution}\;w\in C(\Omega): H(x,p+Dw(x))\leq c\;\text{in}\;\Omega\}.
\end{align}
By a vanishing discount procedure \cite{tran_hamilton-jacobi_2021, tu_vanishing_2022}, there exists $(v,\overline{H}_\Omega(p)) \in \mathrm{Lip}(\overline{\Omega})\times \R$ that solves the cell problem \eqref{eqn:cell}, hence $\overline{H}_\Omega(p) \in \mathcal{C}$ and thus $\inf \mathcal{C} \leq \overline{H}_\Omega(p)$. 

Assume the contrary that $\inf \mathcal{C} <\overline{H}_\Omega(p)$. There exists $w\in C(\Omega)$ and $c\in \mathcal{C}$ such that 
\begin{equation*}
    H(x,p+Dw(x)) \leq c < \overline{H}_\Omega(p)   \qquad\text{in}\;\Omega
\end{equation*}
in the viscosity sense. By Lemma \ref{lem:repLip}, $w\in \mathrm{Lip}(\overline{\Omega})$ with a Lipschitz constant $C_\Omega\hat{C}_R$. Since $v-C$ is also a solution to \eqref{eqn:cell} for any $C\in\R$, we may assume $\max_{\overline\Omega}(w-v)>0$. Since $v,w$ are bounded, we can choose $\lambda > 0$ small such that 
\begin{align*}
    \lambda \, \max
        \left\lbrace 
            \Vert w\Vert_{L^\infty(\Omega)}, \Vert v\Vert_{L^\infty(\Omega)}
        \right\rbrace 
        \leq \frac{1}{2}\left(\overline{H}_\Omega(p) - c\right). 
\end{align*}
Then $w$ and $v$ are a viscosity subsolution and supersolution, respectively, to
\begin{equation*}
    \begin{cases}
        \begin{aligned}
            \lambda w(x) + H(x,p+Dw(x)) &\leq \hat{c} &&\text{in}\;\Omega, \\
            \lambda v(x) + H(x,p+Dv(x)) &\geq \hat{c} &&\text{on}\;\overline{\Omega}, 
        \end{aligned}
    \end{cases}
\end{equation*}
where $\hat{c} = \frac{1}{2}\left(\overline{H}_\Omega(p) + c\right)$.
By the comparison principle \cite[Theorem 2.1]{Soner1986OptimalI} we have $w\leq v$, contradicting $\max_{\overline{\Omega}}(w-v)>0$.

To see \eqref{eq:inf-HbarC1C1}, for every $\varphi\in C^1(\overline{\Omega})$, we set $c = \max_{x\in \overline{\Omega}} H(x,p+D\varphi(x))$. Then $c\in \mathcal{C}$ where $\mathcal{C}$ is defined in \eqref{eq:mathcC}. Thus
\begin{equation*}
    \overline{H}_\Omega(p) = \inf \mathcal{C} \leq c = \max_{x\in \overline{\Omega}} H(x,p+D\varphi(x)).
\end{equation*}
Since this is true for all $\varphi\in C^1(\T^n)$, we deduce first inequality in \eqref{eq:inf-HbarC1C1}. For the latter, we see that any $\varphi\in C^1(\T^n)$ also belongs to $C^1(\overline{\Omega})$, and thus the second inequality of \eqref{eq:inf-HbarC1C1} follows. 
\end{proof}

The representation \eqref{eq:inf-Hbar} and the stability of viscosity solutions to \eqref{eqn:cell} imply that the map $\Omega \mapsto \overline{H}_\Omega(p)$ is left continuous with respect to inner inclusion.

\begin{lem}[Continuity with respect to inner inclusion]
\label{lem:innerH} Assume \ref{itm:A1}--\ref{itm:A2} and \ref{itm:D1}. For $p\in \R^n$, if $\{\Omega_k\}$ is an increasing sequence of domains satisfying \ref{itm:D0} with $\Omega_k \subset \Omega_{k+1}$ for all $k$ and $\Omega = \bigcup_{k=1}^\infty \Omega_k$, then
    \begin{align*}
        \lim_{k\to \infty} \overline{H}_{\Omega_k}(p) = \overline{H}_\Omega(p). 
    \end{align*}
\end{lem}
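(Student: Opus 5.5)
Monotonicity \eqref{eq:monotonicityHbar} gives $\overline H_{\Omega_k}(p)\ge \overline H_{\Omega_{k+1}}(p)\ge \overline H_\Omega(p)$ for all $k$. Hence $c_k:=\overline H_{\Omega_k}(p)$ is nonincreasing and bounded below by $\overline H_\Omega(p)$, and it converges to some $c_\infty\ge \overline H_\Omega(p)$. It therefore suffices to prove $c_\infty\le \overline H_\Omega(p)$. The plan is to build, from the cell correctors on $\Omega_k$, a viscosity subsolution of $H(x,p+Dw)\le c_\infty$ in $\Omega$. The representation \eqref{eq:inf-Hbar} of Lemma \ref{lem:representationHother} then only yields $\overline H_\Omega(p)\le c_\infty$, which is the wrong direction. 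So instead I will build a \emph{supersolution} on $\overline\Omega$ and use comparison, exactly as in the proof of Lemma \ref{lem:representationHother}.

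\textbf{Step 1 (correctors and uniform bounds).} For each $k$ let $v_k$ solve the cell problem \eqref{eqn:cell} on $\Omega_k$ with constant $c_k$. Since $c_k\le c_1\le C_R$, the argument of Lemma \ref{lem:repLip} (which only uses coercivity and the bound on the constant) gives $|\xi|\le\hat C_R$ for all $\xi\in D^+v_k(x)$, $x\in\Omega_k$, uniformly in $k$. Hence the $v_k$ are locally Lipschitz in $\Omega_k$ with a uniform constant. Normalize $v_k(x_0)=0$ at a fixed point $x_0\in\Omega_1$. Every compact $K\subset\Omega$ is contained in some $\Omega_k$ (by compactness, since the $\Omega_k$ are open and increasing), and $\Omega$ is connected. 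So for each compact connected $K\ni x_0$ the $v_k$ are equi-Lipschitz and equibounded on $K$ for large $k$. By Arzel\`a--Ascoli and a diagonal argument, $v_k\to v$ locally uniformly in $\Omega$. Because $\overline\Omega$ is $C^1$ and the interior Lipschitz bound is uniform, $v$ is Lipschitz on $\Omega$ (via the extension remark after Lemma \ref{lem:repLip}) and extends to $\overline\Omega$.

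\textbf{Step 2 (passing to the limit).} Stability of viscosity solutions gives $H(x,p+Dv)\le c_\infty$ in $\Omega$, and also $H(x,p+Dv)\ge c_\infty$ in $\Omega$, since each interior point lies in $\Omega_k$ for large $k$.

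\textbf{Step 3 (the boundary, the main obstacle).} We need the supersolution inequality on $\partial\Omega$, even though the points of $\partial\Omega$ need not lie on any $\partial\Omega_k$. I would proceed as follows. Let $\phi\in C^1$ touch $v$ from below at $\hat x\in\partial\Omega$, made strict by adding $-|x-\hat x|^2$. Choose $x_k\in\overline{\Omega_k}$ minimizing $v_k-\phi$ over $\overline{\Omega_k}\cap \overline{B_r(\hat x)}$. The limiting minimizer must be $\hat x$, but to make this work one needs that $v_k$ on $\overline{\Omega_k}\cap B_r(\hat x)$ is uniformly close to $v$. This follows from the uniform Lipschitz bound together with the $C^1$ geometry of $\Omega$, which provides uniform interior cones, so that chains of points stay inside $\Omega_k$ for $k$ large. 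The state-constraint supersolution property of $v_k$ on $\overline{\Omega_k}$ then gives $H(x_k,p+D\phi(x_k))\ge c_k$, and letting $k\to\infty$ yields $H(\hat x,p+D\phi(\hat x))\ge c_\infty$.

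\textbf{Step 4 (conclusion).} Now $v$ is a state-constraint supersolution with constant $c_\infty$ on $\overline\Omega$, while the corrector of $\Omega$ is a subsolution with constant $\overline H_\Omega(p)$. Suppose $c_\infty>\overline H_\Omega(p)$. Adding the small discount $\lambda$ as in the proof of Lemma \ref{lem:representationHother} and shifting constants so that the subsolution lies above $v$ somewhere contradicts the comparison principle \cite{Soner1986OptimalI}. Hence $c_\infty=\overline H_\Omega(p)$.
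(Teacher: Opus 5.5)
Your argument has a genuine gap, and it starts in the first line: you have read the monotonicity \eqref{eq:monotonicityHbar} backwards. It says $\overline H_\Omega(p)\le \overline H_{\Omega'}(p)$ whenever $\Omega\subset\Omega'$, because the essential supremum in \eqref{eq:infsupformula} is then taken over a smaller set. For an increasing exhaustion this gives $\overline H_{\Omega_k}(p)\le \overline H_{\Omega_{k+1}}(p)\le \overline H_\Omega(p)$. Your claimed inequality $\overline H_{\Omega_k}(p)\ge \overline H_\Omega(p)$ is false in general. For $H=\frac12|p|^2+f$ one has $\overline H_\Omega(0)=\max_{\overline\Omega} f$ by \eqref{eq:Hbar0}, which is nondecreasing in $\Omega$. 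In the paper's example with $f=-d_{\T^n}(x,x_0)$, $\overline H_{\Omega_h}(0)=-h<0=\overline H_\Omega(0)$ even though $\Omega_h\subset\Omega$. So $c_\infty\le \overline H_\Omega(p)$ is the bound that comes for free. The bound $c_\infty\ge\overline H_\Omega(p)$, which you took from monotonicity, is never actually proved, and Steps 3 and 4 are spent entirely on the direction that needed no work.

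The missing direction is already in your Steps 1 and 2. The locally uniform limit $v$ is a viscosity subsolution of $H(x,p+Dv)\le c_\infty$ in $\Omega$. The representation \eqref{eq:inf-Hbar} of Lemma \ref{lem:representationHother} then gives $\overline H_\Omega(p)\le c_\infty$, which is exactly the inequality you dismissed as ``the wrong direction''. Combined with the correct monotonicity, this closes the proof, and it is precisely the paper's argument. The uniform bound in Step 1 should then read $c_k\le\overline H_\Omega(p)\le C_R$ rather than $c_k\le c_1$.

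Even if it were needed, Step 3 is not justified as written. Whether $v_k$ is close to $v$ on $\overline{\Omega_k}\cap B_r(\hat x)$ depends on the geometry of $\Omega_k$, not of $\Omega$. The $\Omega_k$ satisfy \ref{itm:D0} only with $k$-dependent constants. Points of $\overline{\Omega_k}$ near $\partial\Omega$ need not be close, in the intrinsic metric of $\Omega_k$, to nearby interior points; think of pockets cut off by thin walls of $\Omega\setminus\Omega_k$. So neither the interior Lipschitz bound nor the interior cones of $\Omega$ control $v_k$ there.
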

\begin{proof} Fix $|p|\leq R$.
Let $x_0 \in \Omega_k$ for all large $k$, and let $v_k \in C(\overline{\Omega}_k)$ with $v_k(x_0)=0$ solve
\begin{equation*}
    \begin{cases}
        \begin{aligned}
            H(x,p+Dv_k(x)) &= \overline{H}_{\Omega_k}(p) &&\quad \text{in } \Omega_k, \\
            H(x,p+Dv_k(x)) &\geq \overline{H}_{\Omega_k}(p) &&\quad \text{on } \partial\Omega_k.
        \end{aligned}        
    \end{cases}    
\end{equation*}
By \eqref{eq:monotonicityHbar} we have $\overline{H}_{\Omega_k}(p) \leq \overline{H}_{\Omega_{k+1}}(p) \leq \overline{H}_\Omega(p)$, hence 
\begin{equation}\label{eq:Hlimsup}
    \lim_{k\to \infty} \overline{H}_{\Omega_k}(p) \leq \overline{H}_\Omega(p).
\end{equation}
We will apply the Arzelà--Ascoli Theorem to the sequence $v_k$ to use the stability of viscosity solutions. Due to the unknown geometry of $\Omega_k$, we need to do a local convergence argument. For any compact set $K\subset \subset\Omega$ containing $x_0$, there exists a
$C^1$ domain $U$ such that $K\subset U\subset \subset\Omega$.
Since $\Omega = \bigcup_{k=1}^\infty \Omega_k$, we have $\overline{U}\subset \Omega_k$ for all $k$ large. By Lemma \ref{lem:repLip}, we have $v_k\in \mathrm{Lip}(\overline{U})$ with 
\begin{equation*}
    \mathrm{Lip}(v_k)\leq C_U\hat{C}_R. 
\end{equation*}
Thanks to $v_k(x_0) = 0$, we obtain a uniform bound on $\|v_k\|_{L^\infty(U)}$.
By the Arzelà--Ascoli theorem, up to a subsequence,
$v_k\to v$ uniformly on $\overline U$ for some
$v\in C(\overline U)$. Applying a diagonal argument
along a compact exhaustion of $\Omega$, we obtain,
up to a subsequence, $v_k\to v$ locally uniformly in $\Omega$ for some $v\in C(\Omega)$.
The local stability of viscosity subsolutions \cite[Theorem 1.17]{tran_hamilton-jacobi_2021}
then implies
\begin{equation}\label{eq:Hliminf}
    H(x,p+Dv(x)) \leq \lim_{k\to\infty} \overline{H}_{\Omega_k}(p) \quad \text{in}\;\Omega \qquad\Longrightarrow\qquad \overline{H}_\Omega(p) \leq \lim_{k\to\infty} \overline{H}_{\Omega_k}(p)    
\end{equation}
thanks to \eqref{eq:inf-Hbar}. From \eqref{eq:Hlimsup} and \eqref{eq:Hliminf} we obtain the conclusion. 
\end{proof}

\subsection{The inf-max formula for $C^2$ smooth domains} 
\label{subsection:C2domainInfMax}

\begin{defn} \label{defn:FOmega} \quad 
Assume \ref{itm:A1}--\ref{itm:A2} and \ref{itm:D1}. Fix $p\in \R^n$. 
We define 
\begin{align}
    F_\Omega(\varphi) &:= \max_{x\in \overline{\Omega}} H(x,p+D\varphi(x)), \qquad \varphi\in C^1(\overline{\Omega}).
    \label{eq:FOmegaformula}
\end{align}
Recall that, for each $R>0$, the constants $C_R$ and $\hat{C}_R$ are given by the a priori estimates in Lemma \ref{lem:repLip}.
We also set $K_0:=\|D\eta\|_{L^\infty(\T^n)}$, where $\eta$ is the mollifier defined in \eqref{eq:mollifier}, and denote by $\omega_R$ the modulus of continuity of $H$ on $\T^n\times\overline{B}(0,R+2\hat C_R)$.
\end{defn}

\begin{defn} 
	Let $\Omega\subset \R^n$ be an open set. For $\delta>0$, we define 
\begin{equation}\label{eq:DomainsDelta}
    \mathcal{D}_\delta(\Omega) = \{x\in \Omega: \mathrm{dist}(x,\partial \Omega) >\delta\}, 
    \qquad 
    \mathcal{D}^\delta(\Omega) = \{x\in \R^n: \mathrm{dist}(x,\overline{\Omega}) <\delta\}  . 
\end{equation}
\end{defn}
It is clear that $d_{\mathcal{H}}(\Omega, \mathcal{D}^\delta(\Omega)) \leq \delta$ and 
\begin{equation}\label{eq:d2}
\mathcal{D}^\delta(\Omega) = \{x\in \R^n: d(x)> -\delta\} = \Omega+B(0,\delta) . 
\end{equation}

\begin{rem}
Let $\Omega$ be bounded or $\Z^n$-periodic. If
$\Omega_k \subset \Omega_{k+1} \subset \Omega$ and
$\bigcup_{k\in\N}\Omega_k=\Omega$, then $d_{\mathcal H}(\Omega_k,\Omega)\to 0$ as $k\to \infty$. 
Indeed, $\mathrm{dist}(x,\Omega_k)$ decreases pointwise to zero
on $\overline{\Omega}$. By Dini's Theorem, the convergence is
uniform on $\overline{\Omega}$ (or on a periodicity cell). As a consequence, we obtain $\lim_{\delta\to 0^+}d_{\mathcal H}(\mathcal D_\delta(\Omega),\Omega) = 0$, where $\mathcal D_\delta(\Omega)
=\{x\in\Omega:\mathrm{dist}(x,\partial\Omega)>\delta\}$ is given in \eqref{eq:d2}.
\end{rem}

The following lemma refines \cite[Proposition 3.3]{dutta_rate_2026} by showing that the constant depends only on $\Sigma=\partial\Omega$, rather than on a tubular neighborhood of $\partial\Omega$.

\begin{lem}\label{lem:distance} Assume \ref{itm:D1} and suppose in addition that $\partial\Omega$ is of class $C^2$. 
Let $\mathrm{reach}(\Sigma)$ be the reach of $\Sigma$ as in \eqref{eq:defnReach}.
Then, for every \(\delta\in(0,\frac{1}{6}\mathrm{reach}(\Sigma))\), there exists a \(C^1\)
diffeomorphism $T_\delta:\R^n\to\R^n$
such that $T_\delta(x+k) = T_\delta(x) + k$ for all $x\in \R^n$, $k\in \Z^n$, and
    \begin{equation}\label{eq:propertyTdelta}
    \begin{aligned}
        &T_\delta: \overline{\Omega}\to \overline{\mathcal{D}^\delta(\Omega)}, \qquad T_\delta(\partial\Omega) = \partial \mathcal{D}^\delta(\Omega), \\
        &T_\delta: \overline{\mathcal{D}_\delta(\Omega)}\to \overline{\Omega}, \qquad T_\delta(\partial \mathcal{D}_\delta(\Omega)) = \partial\Omega.
    \end{aligned}
    \end{equation}
Moreover,
\begin{equation}\label{eq:bounda}
    \sup_{x\in\R^n} |T_\delta(x)-x|
        \leq \delta,
        \qquad
        \sup_{x\in\R^n}
        \|\D T_\delta(x)-\mathbf{Id}\|_{\mathrm{op}}
        \leq \Lambda_{\Sigma} \delta, \qquad
        \Lambda_\Sigma = \frac{8}{\mathrm{reach}(\Sigma)}. 
\end{equation}
Hence, we can define a $C^1$ diffeomorphism $\widetilde{T}_\delta:\T^n\to\T^n$ with the above properties.
\end{lem}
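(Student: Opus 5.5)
We construct $T_\delta$ as a normal push along the signed distance function $d$. Write $\rho=\mathrm{reach}(\Sigma)$. On the tubular neighborhood $U_\rho$, the function $d$ is $C^2$, $|Dd|=1$, and every $x\in U_\rho$ has the form $x=x_0+d(x)\mathbf n(x_0)$ with $Dd(x)=\mathbf n(x_0)$. Fix a smooth cutoff $\psi:\R\to[0,1]$ with $\psi\equiv1$ on $[-2\delta,2\delta]$, $\psi\equiv0$ outside $(-4\delta,4\delta)$, and $|\psi'|\le 1/\delta$. Then define
\[
T_\delta(x)=x-\delta\,\psi(d(x))\,Dd(x)\ \text{ for } x\in U_\rho, \qquad T_\delta(x)=x \ \text{ otherwise}.
\]
Since $4\delta<\rho$, this map is $C^1$ on $\R^n$. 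It commutes with $\Z^n$ translations because $d$ is periodic, and this periodicity is what yields $\widetilde T_\delta$ on $\T^n$. The estimate $|T_\delta(x)-x|\le\delta$ is immediate.

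For the mapping properties \eqref{eq:propertyTdelta}, work in the normal coordinates $(x_0,s)\mapsto x_0+s\mathbf n(x_0)$, which are valid for $|s|<\rho$. In these coordinates $T_\delta$ acts only on the normal variable, sending $s\mapsto f(s):=s-\delta\psi(s)$, and $Dd$ is constant along normal segments. We have $f(0)=-\delta$ and $f(\delta)=0$. Moreover $f'(s)=1-\delta\psi'(s)$, so once we arrange $\delta|\psi'|<1$, the map $f$ is a strictly increasing bijection of $\R$ that equals the identity for $|s|\ge4\delta$. It follows that $T_\delta$ is a bijection, and $\{d\ge0\}=\overline\Omega$ is carried onto $\{d\ge-\delta\}=\overline{\mathcal D^\delta(\Omega)}$ by \eqref{eq:d2}. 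Likewise $\{d\ge\delta\}$ is carried onto $\{d\ge0\}$, and the level sets correspond: $\partial\Omega\mapsto\partial\mathcal D^\delta(\Omega)$ and $\partial\mathcal D_\delta(\Omega)\mapsto\partial\Omega$.

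Strict monotonicity needs $\delta|\psi'|<1$, which the slope bound $|\psi'|\le1/\delta$ does not give. We therefore spread the transition over a longer interval: take $\psi\equiv1$ on $[-2\delta,2\delta]$, supported in $[-5\delta,5\delta]$, with $|\psi'|\le 1/(2\delta)$. Then $f'\ge1/2$. This requires $5\delta<\rho$, which holds since $\delta<\rho/6$; this is presumably where the factor $1/6$ comes from.

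The main obstacle is the derivative bound $\|\D T_\delta-\mathbf{Id}\|_{\mathrm{op}}\le 8\delta/\rho$. We have
\[
\D T_\delta-\mathbf{Id}=-\delta\big(\psi'(d)\,Dd\otimes Dd+\psi(d)\,D^2d\big).
\]
The second term is controlled by \eqref{eq:D2dx}: for $|s|\le 5\delta<5\rho/6$ and $|\kappa_i|\le1/\rho$ by \eqref{eq:reachS}, each eigenvalue satisfies $|\kappa_i/(1-s\kappa_i)|\le 6/\rho$. This contributes at most $6\delta/\rho$. The first term contributes $\delta|\psi'(d)|$, which is of order one and so not small in $\delta/\rho$.

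To fix this, let the cutoff's scale depend on $\rho$ rather than $\delta$. Take $\psi\equiv1$ on $[-\rho/6,\rho/6]$, supported in $(-5\rho/6,5\rho/6)$, with $|\psi'|\le 2/\rho$ (a linear ramp over length $2\rho/3$ needs slope $3/(2\rho)$, then smooth it). Then $\delta|\psi'|\le 2\delta/\rho<1$ preserves monotonicity, and the normal motion remains exactly $\delta$ near $\Sigma$ because $\delta<\rho/6$. The two terms now add to at most $2\delta/\rho+6\delta/\rho=8\delta/\rho$. The two terms act on orthogonal pieces (the normal direction versus $D^2d$, which annihilates $Dd$), so one could even take their maximum; either way $\Lambda_\Sigma=8/\rho$ is achieved. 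Finally, $\D T_\delta$ is invertible because $8\delta/\rho<1$, and combined with the bijectivity established above this makes $T_\delta$ a $C^1$ diffeomorphism.
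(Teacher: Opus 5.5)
Your final construction is essentially the paper's. The paper also sets $T_\delta(x)=x-\delta\eta(d(x))Dd(x)$ with a cutoff whose transition scale is tied to the reach rather than to $\delta$. It works in $U_{\mathrm{reach}(\Sigma)/2}$ with $|\eta'|\le 6/\mathrm{reach}(\Sigma)$, so its constant splits as $6+2$ (cutoff slope, then curvature term $\le 2K_\Sigma$) rather than your $2+6$.

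One slip needs fixing: $\delta<\rho/6$ only gives $8\delta/\rho<4/3$, not $8\delta/\rho<1$, so that inequality does not prove $\D T_\delta$ is invertible. Use your own orthogonality remark instead, which gives $\|\D T_\delta-\mathbf{Id}\|_{\mathrm{op}}\le 6\delta/\rho<1$. Alternatively, write $T_\delta=\Phi\circ(\mathrm{id}\times f)\circ\Phi^{-1}$ with $\Phi(x_0,s)=x_0+s\mathbf{n}(x_0)$, which is essentially how the paper concludes that $T_\delta$ is a $C^1$ diffeomorphism.
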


\begin{proof}[Proof of Lemma \ref{lem:distance}]  We define
\begin{equation}\label{eq:smallSrp}
    \rho= 
    \frac{1}{2}\mathrm{reach}(\Sigma). 
\end{equation}
    Let $\eta: \R \to [0,1]$ be an even $C^2$ function, non-increasing on $[0,\infty)$, such that
\begin{equation*}
    \eta(-s) = \eta(s)\quad s\in \R, \qquad 
    -\frac{3}{\rho} \leq \eta'\leq 0 \quad \text{for}\; s\geq 0,  
        \qquad 
        \eta(s) = 
        \begin{cases}
            1 &\quad  |s| \leq \frac{\rho}{3}, \\[1mm]
            0 &\quad  |s|\geq \rho. 
        \end{cases}
\end{equation*}
For $\delta \in (0,\frac{\rho}{3})$, we consider the expansion map $T_\delta: \R^n\to \R^n$ defined by 
\begin{equation}\label{eq:Tdelta}
    T_\delta(x) = 
    \begin{cases}
        x - \delta \eta(d(x)) Dd(x), &\qquad x\in U_\rho,\\ 
        x, &\qquad x\notin U_\rho. 
    \end{cases}
\end{equation}
It is clear that $T_\delta\in C^1(\R^n;\R^n)$. For $x\in U_{\rho}$, let $x_0 = \pi_{\partial\Omega}(x)$, and $\mathbf{n}(x_0) = Dd(x_0)$ be the unit inward normal vector to $\Omega$ at $x_0\in \partial\Omega$, we have $ x = x_0 + d(x)\,\mathbf{n}(x_0)$ and $Dd(x) = \mathbf{n}(x_0)$. Therefore
\begin{align*}
    T_\delta(x)
    = x_0 + \big(d(x) - \delta \eta(d(x))\big)\,\mathbf{n}(x_0) 
    \quad \Longrightarrow \quad 
    d(T_\delta(x)) = d(x) -\delta \eta(d(x)), 
    \quad x\in U_{\rho}. 
\end{align*}
If we define $\varphi_\delta (s) = s - \delta \eta(s)$ for $s\in \R$ then 
\begin{equation*}
    \varphi'(s) = 1-\delta \eta'(s) \geq 1 - \frac{3\delta}{\rho} > 0. 
\end{equation*}
Thus, $\varphi_\delta$ is strictly increasing on $(-\rho,\rho)$ and 
\begin{equation*}
    \varphi_\delta(0) = -\delta \eta(0) = -\delta,  \qquad \varphi_\delta(\delta) = \delta - \delta \eta(\delta) = 0
\end{equation*}
since $\eta(\delta) = 1$ as $0\leq \delta \leq \frac{\rho}{3}$. Hence $\varphi_\delta$ is a $C^1$ diffeomorphism of
$(-\rho,\rho)$ onto itself. We conclude that $T_\delta:\R^n\to \R^n$ is a $C^1$ diffeomorphism, and in particular $T_\delta:U_\rho\to U_\rho$ is also a $C^1$ diffeomorphism. We compute, using the fact that $\varphi_\delta$ is strictly increasing, that
\begin{equation*}
    d(T_\delta(x)) \geq -\delta \quad \Longleftrightarrow\quad  d(x) \geq 0 
        \qquad \text{and}\qquad 
    d(T_\delta(x)) = -\delta \quad \Longleftrightarrow\quad  d(x) = 0.
\end{equation*}
This and a similar argument verify \eqref{eq:propertyTdelta}. It is clear that $\sup_{x\in \R^n}|T_\delta(x) - x| \leq \delta$. On the other hand, we compute 
\begin{equation*}
     DT_\delta(x)
    =
    \mathbf{Id}
    -\delta\eta'(d(x))
        Dd(x)\otimes Dd(x)
    -\delta\eta(d(x))D^2d(x).
\end{equation*}
Therefore
\begin{align*}
    |\left\langle (DT_\delta(x) - \mathbf{Id})\xi,\xi \right\rangle | \leq \delta \left(\frac{3}{\rho} |\langle Dd(x), \xi\rangle|^2 + \Vert D^2d(x)\Vert_{\mathrm{op}} |\xi|^2\right), \qquad \xi \in \R^n.
\end{align*}
Let $K_\Sigma$ be the maximal principal curvature of $\Sigma = \partial\Omega$ as in Definition \ref{defn:KSigma}.
From \eqref{eq:smallSrp} and \eqref{eq:reachS}, $\rho K_\Sigma \leq \frac{1}{2}$, hence by \eqref{eq:D2dxMain} we conclude that 
    \begin{align*}
        \sup_{x\in\R^n}
        \|DT_\delta(x)-\mathbf{Id}\|_{\mathrm{op}}
        & \leq \delta\left( 
        \frac{3}{\rho} + \sup_{x\in U_\rho}\Vert D^2d(x)\Vert_{\mathrm{op}} \right) \\
        & \leq 
        \delta \left(\frac{3}{\rho}+\frac{K_\Sigma}{1-\rho K_\Sigma}\right) 
        \leq \delta\left( \frac{3}{\rho} + 2K_\Sigma \right) \leq \frac{8\delta}{\mathrm{reach}(\Sigma)} = \Lambda_\Sigma \delta.
    \end{align*}
    Finally, since $d$ is $\Z^n$-periodic, we have $T_\delta(x+k) = T_\delta(x) + k$ for $k\in \Z^n$. Thus $T_\delta$ descends to a well-defined $C^1$ diffeomorphism $\widetilde{T}_\delta:\T^n\to\T^n$, with $\widetilde{T}_\delta([x]) :=
    [T_\delta(x)]$. The mapping properties and estimates above descend immediately to $\T^n$.
\end{proof}

\begin{lem}\label{lem:approxdomain} 
Assume \ref{itm:A1}--\ref{itm:A2} and \ref{itm:D1}, and suppose further that $\Omega$ is of class $C^2$.
Fix $p\in\mathbb R^n$ with $|p|\leq R$, and let $C_R$ and $\hat C_R$ be the constants given in Lemma \ref{lem:repLip}. Let $v\in C(\Omega)$ be a $\mathbb Z^n$-periodic viscosity subsolution to 
\begin{equation*}
    H(x,p+Dv(x)) \leq \overline{H}_\Omega(p) \qquad\text{in}\;\Omega. 
\end{equation*}
For $0<\varepsilon<\frac{1}{32}\mathrm{reach}(\Sigma)$, there exists 
$w_{\varepsilon} \in C^\infty(\mathcal{D}^\varepsilon(\Omega))$ solving
\begin{equation}\label{eq:weps-estimate}
    H(x,p+Dw_{\varepsilon}(x)) 
    \leq 
    \overline{H}_\Omega(p) 
    + 2\omega_R\big(2\varepsilon\big(1+2\hat{C}_R \Lambda_\Sigma\big)\big), \qquad x\in \mathcal{D}^{\varepsilon}(\Omega),
\end{equation}
such that
\begin{align}
&     \Vert Dw_\varepsilon\Vert_{L^\infty(\mathcal{D}^\varepsilon(\Omega))} \leq 2\hat{C}_R, \qquad 
    \|w_{\varepsilon} - v\|_{L^\infty(\overline{\Omega})} \leq 4\hat{C}_R\varepsilon  .
    \label{eq:weps-estimate2ww} 
\end{align}
For every $0<\varepsilon<\frac{1}{32}\mathrm{reach}(\Sigma)$, there exists $\psi_\varepsilon\in C^\infty(\T^n)$ such that
    \begin{equation}\label{eq:representationFReps}
    \begin{aligned}
        H(x,p+D\psi_\varepsilon(x))
        &\leq \overline{H}_\Omega(p)
        +2\omega_R\big(2\varepsilon(1+2\hat{C}_R\Lambda_\Sigma)\big), \qquad x\in \mathcal{D}^{\varepsilon/2}(\Omega)
    \end{aligned}
	\end{equation}
	such that $w_\varepsilon = \psi_\varepsilon$ on $\mathcal{D}^{\varepsilon/2}(\Omega)$.
Here, $\Lambda_\Sigma = \frac{8}{\mathrm{reach}(\Sigma)}$ and $\omega_R$ is the modulus of continuity for $H$ on $\T^n\times \overline{B}(0,R+2\hat{C}_R)$ from Definition \ref{defn:FOmega}.
\end{lem}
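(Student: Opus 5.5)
The plan is to stretch $v$ to a slightly larger domain using the diffeomorphism of Lemma~\ref{lem:distance}, mollify the result, and then cut off to obtain a global smooth function. Fix $0<\varepsilon<\frac{1}{32}\mathrm{reach}(\Sigma)$ and set $\delta=2\varepsilon$, which satisfies $\delta<\frac16\mathrm{reach}(\Sigma)$. Let $T_\delta$ be the diffeomorphism from Lemma~\ref{lem:distance}, mapping $\overline\Omega$ onto $\overline{\mathcal D^{\delta}(\Omega)}$. Define the pullback
\[
\tilde v=v\circ T_\delta^{-1}\quad\text{on }\mathcal D^{\delta}(\Omega).
\]
Then $\tilde v$ is $\Z^n$-periodic because $T_\delta$ commutes with integer translations. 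By Lemma~\ref{lem:repLip}, $v$ is Lipschitz with $|Dv|\le\hat C_R$ a.e. Since $\|DT_\delta-\mathbf{Id}\|_{\mathrm{op}}\le\Lambda_\Sigma\delta\le\frac12$, we get $\|DT_\delta^{-1}\|\le 2$ and $\|DT_\delta^{-1}-\mathbf{Id}\|\le 2\Lambda_\Sigma\delta$. Consequently $|D\tilde v|\le 2\hat C_R$ a.e.

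Next I estimate the Hamiltonian along $\tilde v$. For a.e.\ $y=T_\delta(x)$,
\[
D\tilde v(y)=(DT_\delta^{-1}(y))^{T}Dv(x),
\]
so $|D\tilde v(y)-Dv(x)|\le 2\Lambda_\Sigma\delta\hat C_R$ and $|y-x|\le\delta$. Subsolutions of convex Hamiltonians are a.e.\ subsolutions for Lipschitz functions, so $H(x,p+Dv(x))\le\overline H_\Omega(p)$ holds a.e. Continuity of $H$ then gives
\[
H(y,p+D\tilde v(y))\le\overline H_\Omega(p)+\omega_R\big(\delta+2\Lambda_\Sigma\delta\hat C_R\big)\quad\text{a.e. in }\mathcal D^\delta(\Omega).
\]

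I then mollify: set $w_\varepsilon=\eta_\varepsilon*\tilde v$. This is well defined and smooth on $\mathcal D^{\delta-\varepsilon}(\Omega)=\mathcal D^{\varepsilon}(\Omega)$, because $\mathcal D^\varepsilon(\Omega)+B_\varepsilon\subset\mathcal D^{2\varepsilon}(\Omega)$ by \eqref{eq:d2}. Clearly $|Dw_\varepsilon|\le 2\hat C_R$. To estimate $H(x,p+Dw_\varepsilon(x))$, I write $Dw_\varepsilon(x)=\int\eta_\varepsilon(x-y)D\tilde v(y)\,dy$ and apply Jensen's inequality (convexity of $H$ in $p$) to $H(x,\cdot)$. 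Replacing $H(x,\cdot)$ by $H(y,\cdot)$ costs another $\omega_R(\varepsilon)$. Both error terms are at most $\omega_R(2\varepsilon(1+2\hat C_R\Lambda_\Sigma))$, which yields \eqref{eq:weps-estimate} with the factor $2$; here $\omega_R$ is taken on the ball $\overline B(0,R+2\hat C_R)$, which contains all the momenta involved.

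For the sup bound on $\overline\Omega$, I split
\[
|w_\varepsilon-v|\le|w_\varepsilon-\tilde v|+|\tilde v-v|.
\]
The first term is at most $2\hat C_R\varepsilon$ by the Lipschitz bound and the mollification radius. For the second, $|v(T_\delta^{-1}x)-v(x)|$ requires a Lipschitz estimate for $v$ along the segment from $x$ to $T_\delta^{-1}x$. This segment is normal to $\partial\Omega$ (in the direction of $\pm Dd$) and stays in $\overline\Omega$, or else we use $\tilde v$, which is $2\hat C_R$-Lipschitz on the convex piece. This gives a bound of order $\hat C_R\cdot 2\varepsilon\cdot$(constant), and a careful choice gives $4\hat C_R\varepsilon$ in total. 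This constant bookkeeping is the delicate point: the global Lipschitz constant of $v$ on $\overline\Omega$ in Lemma~\ref{lem:repLip} involves $C_\Omega$, so I must avoid it. I would therefore use that $T_\delta$ moves points along normal segments of length $\le\delta$, which lie inside $\mathcal D^\delta(\Omega)$ where $\tilde v$ is $2\hat C_R$-Lipschitz along the segment.

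Finally, to obtain $\psi_\varepsilon\in C^\infty(\T^n)$, take a smooth periodic cutoff $\chi$ equal to $1$ on $\mathcal D^{\varepsilon/2}(\Omega)$ and supported in $\mathcal D^{\varepsilon}(\Omega)$. Such a cutoff exists by mollifying the indicator of $\mathcal D^{3\varepsilon/4}(\Omega)$ at scale $\varepsilon/8$. Set $\psi_\varepsilon=\chi w_\varepsilon$, extended by $0$ outside $\mathcal D^{\varepsilon}(\Omega)$. Then $\psi_\varepsilon=w_\varepsilon$ on $\mathcal D^{\varepsilon/2}(\Omega)$, so \eqref{eq:representationFReps} follows from \eqref{eq:weps-estimate}.
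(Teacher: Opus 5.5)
Your construction is the paper's: pull back $v$ by $T_\delta$ with $\delta=2\varepsilon$, mollify at scale $\varepsilon$, apply Jensen, and cut off. The Hamiltonian bound \eqref{eq:weps-estimate}, the gradient bound $2\hat C_R$, and the construction of $\psi_\varepsilon$ all go through as you describe. You work with a.e.\ gradients via the chain rule, which is legitimate because $T_\delta^{-1}$ is a $C^1$ diffeomorphism and preserves null sets. This replaces the paper's viscosity change of variables and superdifferential bound, and it is a harmless simplification, since the Jensen step only uses the a.e.\ inequality anyway.

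The one step that does not close as you committed to it is the constant in $\|w_\varepsilon-v\|_{L^\infty(\overline\Omega)}\le 4\hat C_R\varepsilon$.

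\begin{itemize}
\item \emph{Why your route gives the wrong constant.} You write $|\tilde v(x)-v(x)|=|\tilde v(x)-\tilde v(T_\delta x)|$ and use the $2\hat C_R$-Lipschitz bound of $\tilde v$ along $[x,T_\delta x]$. This gives $2\hat C_R\delta=4\hat C_R\varepsilon$. Adding the mollification error $2\hat C_R\varepsilon$, the total is $6\hat C_R\varepsilon$, not $4\hat C_R\varepsilon$.
\item \emph{The fix.} Use the option you mentioned and then discarded, which is what the paper does: estimate $|v(T_\delta^{-1}x)-v(x)|$ with $v$ itself along the segment $[T_\delta^{-1}x,x]$.
\item \emph{Why $C_\Omega$ is not needed.} Lemma \ref{lem:repLip} gives $|\xi|\le\hat C_R$ for every $\xi\in D^+v(y)$ at every $y\in\Omega$. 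Hence $v$ is $\hat C_R$-Lipschitz along any segment contained in $\Omega$. By continuity, the same holds along any segment in $\overline\Omega$ whose relative interior lies in $\Omega$.
\item \emph{Why the segment qualifies.} $T_\delta$ is the identity outside $U_\rho$. Inside $U_\rho$ it moves each point along the normal line through its projection onto $\partial\Omega$, and it does not increase $d$. So on $[T_\delta^{-1}x,x]$ we have $d\ge d(x)\ge 0$, with $d>0$ except possibly at the endpoint $x$.
\end{itemize}

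This yields $|\tilde v(x)-v(x)|\le\hat C_R\delta=2\hat C_R\varepsilon$, and with the mollification error the stated total $4\hat C_R\varepsilon$, exactly as in the paper's Step 2.
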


\begin{proof}[Proof of Lemma \ref{lem:approxdomain}] By Lemma \ref{lem:repLip}, since $\overline{H}_\Omega(p) \leq C_R$, we have 
\begin{equation}\label{eq:boundv}
    v\in \mathrm{Lip}_{\text{loc}}(\Omega), \quad  |Dv(x)|\leq \hat{C}_R\;\text{a.e.}\;x\in \Omega, \quad v\in \mathrm{Lip}(\overline{\Omega}), \quad  \mathrm{Lip}(v) \leq C_\Omega\hat{C}_R.
\end{equation}
Let $T_\delta$ be the map from Lemma \ref{lem:distance}. For $x\in \Omega$, we set $\hat{x} = T_\delta(x) \in \mathcal{D}^\delta(\Omega)$. We define
\begin{align}\label{eq:defnw}
    w(\hat{x}) = v\big(T_\delta^{-1}(\hat{x})\big), \quad \hat{x} \in \mathcal{D}^\delta(\Omega) \qquad\Longrightarrow\qquad
    Dw(\hat{x}) = \big(DT_\delta(x)\big)^{-T} D v(x).
\end{align}
We take 
\begin{equation*}
	0 < \delta < \delta_0,\qquad 
    \Lambda_\Sigma = \frac{8}{\mathrm{reach}(\Sigma)}, \qquad \delta_0 = \frac{1}{2\Lambda_\Sigma} =\frac{\mathrm{reach}(\Sigma)}{16}. 
\end{equation*}
Lemma \ref{lem:distance} yields that $\Vert DT_\delta(x) - \mathbf{Id}\Vert_{\mathrm{op}}\leq \Lambda_\Sigma \delta$.
Since $\Lambda_\Sigma \delta \leq \frac{1}{2}$ and $\D (T_\delta^{-1})(\hat{x}) = (\D T_\delta(x))^{-1}$, we deduce from \eqref{eq:bounda} that
\begin{align*}
        \sup_{x\in \R^n} \Vert (\D T_\delta(x))^{-1} - \mathbf{Id} \Vert_{\mathrm{op}}
        = 
        \sup_{\hat{x}\in \R^n} 
            \left\Vert \D (T_\delta^{-1})(\hat{x})-\mathbf{Id} \right\Vert _{\mathrm{op}}
        \leq 
            \frac{\Lambda_\Sigma \delta}{1 - \Lambda_\Sigma\delta}  \leq 2\Lambda_\Sigma\delta. 
\end{align*}
Thus, for all $x\in \R^n$ and $\xi\in \R^n$, we have
\begin{align*}
    \left\vert (\D T_\delta(x))^{-1}  \xi \right\vert \leq \frac{|\xi|}{1-\Lambda_\Sigma\delta} \leq 2|\xi|. 
\end{align*}
Together with \eqref{eq:defnw} we obtain $|\xi| \leq 2\hat{C}_R$ for all $\xi\in D^+w(\hat{x}), \hat{x}\in \mathcal{D}^\delta(\Omega)$. This means 
\begin{equation}\label{eq:localLipw}
    w\in \mathrm{Lip}_{\mathrm{loc}}(\mathcal{D}^\delta(\Omega)), \qquad |Dw(\hat{x})| \leq 2\hat{C}_R\quad \text{for a.e.}\;\hat{x}\in \mathcal{D}^\delta(\Omega)    . 
\end{equation}
By \cite[Remark 2.8]{bardi_optimal_1997}, $w\in C(\mathcal{D}^\delta(\Omega))$ is a viscosity subsolution to
\begin{align*}
	 H\left(T_\delta^{-1}(\hat{x}), p + \big(DT_\delta(T_\delta^{-1}(\hat{x}))\big)^T Dw(\hat{x})\right) 
	 \leq 
	 \overline{H}_\Omega(p),
     \qquad \hat{x}\in\mathcal D^\delta(\Omega).
\end{align*}
We claim that $w$ is a viscosity subsolution to
\begin{align}\label{eq:claim}
     H\big(\hat{x}, p + Dw(\hat{x})\big)  \leq \overline{H}_\Omega(p)  + \omega_R\big(\delta\big(1 + 2\hat{C}_R \Lambda_\Sigma\big)\big) , \qquad \hat{x} \in \mathcal{D}^\delta(\Omega).
\end{align}
Let $\hat{x}\in\mathcal D^\delta(\Omega)$, $\xi\in D^+w(\hat{x})$, and $x=T_\delta^{-1}(\hat{x})$. We have
\begin{align*}
    H(x, p + \D T_\delta(x)^T\xi) \leq \overline{H}_\Omega(p). 
\end{align*}
Since $\xi \in D^+w(\hat{x})$, we have $|\xi|\leq 2\hat{C}_R$ and $\nabla T_\delta(x)^T\xi \in D^+v(x)$.  By the coercivity estimate in the proof of Lemma \ref{lem:repLip}, $|DT_\delta(x)^T\xi|\leq\hat C_R$. We have
\begin{align*}
    H(\hat{x}, p +\xi) 
        &\leq 
    H(x, p + \D T_\delta(x)^T\xi)+
    \left| H(x, p + \D T_\delta(x)^T\xi) - H(\hat{x}, p + \xi) \right| \\
        &\leq 
    \overline{H}_\Omega(p) + \omega_R\left(|x-\hat{x}| + \left|(\D T_\delta(x)^T - \mathbf{Id})\xi\right|\right) \\
        &\leq 
    \overline{H}_\Omega(p) + \omega_R\big(\delta(1+ 2\hat{C}_R \Lambda_\Sigma)\big) . 
\end{align*}
thanks to \eqref{eq:bounda}. 
This proves the claim \eqref{eq:claim}. 

\medskip

\noindent
{\it \underline{Step 1. }}
Let $\varepsilon \in (0,\delta)$ and $\eta_\varepsilon$ be the mollifier in \eqref{eq:mollifier}. We have
\begin{equation*}
    w_\varepsilon = \eta_\varepsilon * w \in C^\infty(\mathcal{D}^{\delta-\varepsilon}(\Omega)).
\end{equation*}
For $\hat{x}\in \mathcal{D}^{\delta-\varepsilon}(\Omega)$ we have $B(\hat{x}, \varepsilon)\subset \mathcal{D}^\delta(\Omega)$. 
By Jensen’s inequality, we have
\begin{align*}
    H(\hat{x},p+Dw_\varepsilon(\hat{x})) 
        &= 
    H\left(\hat{x}, p + \int_{B(0,\varepsilon)}\eta_\varepsilon(\hat{y}) D w(\hat{x}-\hat{y})\;d\hat{y}\right) \\
        &\leq 
    \int_{B(0,\varepsilon)} H\Big(\hat{x}, p + Dw (\hat{x}-\hat{y})\Big)\eta_\varepsilon(\hat{y})\; d\hat{y} \\
    &\leq \int_{B(0,\varepsilon)} H\Big(\hat{x}-\hat{y}, p + Dw(\hat{x}-\hat{y})\Big)\eta_\varepsilon(\hat{y})\; d\hat{y} 
        + 
        \omega_R(\varepsilon) \\
    &\leq \overline{H}_\Omega(p) + \omega_R\big(\delta\big(1+2\hat{C}_R \Lambda_\Sigma\big)\big)  + \omega_R(\varepsilon), \qquad \hat{x}\in \mathcal{D}^{\delta-\varepsilon}(\Omega),
\end{align*}
where the last inequality is from \eqref{eq:claim}, since $\hat{x}-\hat{y} \in \mathcal{D}^\delta(\Omega)$ given $\hat{x}\in \mathcal{D}^{\delta-\varepsilon}(\Omega)$ and $\hat{y}\in B(0,\varepsilon)$.  
\medskip

\noindent
{\it \underline{Step 2. }}
We note that, if $\hat{x}\in \overline{\Omega}$ then the line segment connecting $\hat{x}$ and $x = T^{-1}_\delta(\hat{x})$ is contained in $\overline{\Omega}$, namely
\begin{equation*}
    [x,\hat{x}] = \{sx+(1-s)\hat{x}: s\in [0,1]\} \subset \overline{\Omega}.
\end{equation*}
Therefore we can use $Dv(x)\leq \hat{C}_R$ a.e. in $\Omega$, after a mollifying argument, to obtain that
\begin{equation}\label{eq:termv}
\begin{aligned}
    |w(\hat{x}) - v(\hat{x})| &\leq 
    \left|v\left(T_\delta^{-1}(\hat{x})\right) - v(\hat{x})\right| \\
        &\leq |x-\hat{x}|\int_0^1 |Dv\left( sx+(1-s)\hat{x} \right)|\;ds \leq \hat{C}_R|x-\hat{x}| \leq \hat{C}_R\delta,
\end{aligned}
\end{equation}
thanks to \eqref{eq:bounda} in the last inequality. On the other hand, if $\hat{x}\in \overline{\Omega}$ then 
\begin{equation*}
    B(\hat{x}, \varepsilon)\subset \mathcal{D}^\varepsilon(\Omega) \subset \mathcal{D}^{\delta-\varepsilon}(\Omega)    
\end{equation*}
if $2\varepsilon \leq \delta$. 
Thus, by \eqref{eq:localLipw} we have $|w(\hat{x}-\hat{y}) - w(\hat{x})|\leq 2\hat{C}_R|\hat{y}|$ for all $\hat{y}\in B(0,\varepsilon)$. As a consequence, we have
\begin{equation}\label{eq:termw}
    |w_\varepsilon(\hat{x}) - w(\hat{x})|  \leq \int_{B(0,\varepsilon)} \big|w(\hat{x}-\hat{y}) - w(\hat{x})\big|\eta_\varepsilon(\hat{y})\;dy \leq 2\hat{C}_R\varepsilon.
\end{equation}
Therefore, if $\hat{x} \in \overline{\Omega}$, using \eqref{eq:termv} and \eqref{eq:termw} we obtain
 \begin{align*}
     |w_\varepsilon(\hat{x}) - v(\hat{x})| 
        \leq |w_\varepsilon(\hat{x}) - w(\hat{x})| 
            + |w(\hat{x}) - v(\hat{x})| 
        \leq 
        \hat{C}_R(2\varepsilon+ \delta). 
 \end{align*}
Let $\delta=2\varepsilon$. The function \(w\) defined in \eqref{eq:defnw} belongs to \(\mathrm{Lip}_{\mathrm{loc}}(\mathcal{D}^{2\varepsilon}(\Omega))\). Consequently, its mollification \(w_\varepsilon=\eta_\varepsilon*w\) belongs to \(C^\infty(\mathcal{D}^\varepsilon(\Omega))\) and inherits the a.e. gradient bound \(|Dw|\leq 2\hat C_R\) a.e. in $\mathcal{D}^{2\varepsilon}(\Omega)$ from \eqref{eq:localLipw}. Hence 
\begin{align*}
	& |Dw_\varepsilon(x)|\leq 2\hat C_R
\qquad\text{for all }x\in\mathcal{D}^\varepsilon(\Omega). 
\end{align*}
Therefore, we obtain the desired conclusion \eqref{eq:weps-estimate} and \eqref{eq:weps-estimate2ww}
\medskip

\noindent
{\it \underline{Step 3. }} Choose a smooth periodic cutoff $\chi$ such that $\chi=1$ in a neighborhood of $\overline{\mathcal D^{\varepsilon/2}(\Omega)}$ and $\mathrm{supp}(\chi)\subset\mathcal D^\varepsilon(\Omega)$. We define
\begin{equation*}
    \psi_\varepsilon(x)=
    \begin{cases}
        \chi(x)w_\varepsilon(x), &x\in\mathcal D^\varepsilon(\Omega),\\
        0, &x\notin\mathcal D^\varepsilon(\Omega).
    \end{cases}
\end{equation*}
Then $\psi_\varepsilon\in C^\infty(\T^n)$ and $\psi_\varepsilon=w_\varepsilon$ on $\mathcal D^{\varepsilon/2}(\Omega)$. Together with \eqref{eq:weps-estimate}, this proves \eqref{eq:representationFReps}. The restriction $\varepsilon<\frac{1}{32}\mathrm{reach}(\Sigma)$ ensures that $\delta=2\varepsilon$ satisfies the condition $\delta<\frac{1}{16}\mathrm{reach}(\Sigma)$ used in Lemma \ref{lem:approxdomain}.
\end{proof}

We give a proof for the inf–max representation for $C^{2}$ domains.

\begin{prop}[Representation for $C^2$ domains] \label{prop:infmaxC2}
Assume \ref{itm:A1}--\ref{itm:A2} and \ref{itm:D1}, and suppose further that \(\partial\Omega\) is of class \(C^2\). For $p\in \R^n$ we have
    \begin{align}
        \overline{H}_\Omega(p) 
        &= \inf_{\varphi\in C^1(\T^n)} \max_{y\in \overline{\Omega}} H(y, p+D\varphi(y))  
        = 
        \inf_{\varphi\in C^1(\overline{\Omega})} \max_{y\in \overline{\Omega}} H(y, p+D\varphi(y)) . 
        \label{eq:repC2all}
    \end{align}
    Here $\hat{C}_R$, $\omega_R$ are given in Definition \ref{defn:FOmega}, $K_0=\|D\eta\|_{L^1(\R^n)}$. 
%
\end{prop}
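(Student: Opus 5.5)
The lower bounds are already available. Lemma \ref{lem:representationHother}, specifically \eqref{eq:inf-HbarC1C1}, gives
\[
\overline H_\Omega(p)\le \inf_{C^1(\overline\Omega)}\max_{\overline\Omega} H(y,p+D\varphi)\le \inf_{C^1(\T^n)}\max_{\overline\Omega} H(y,p+D\varphi).
\]
So the plan is to prove only the reverse inequality
\[
\inf_{\varphi\in C^1(\T^n)}\max_{y\in\overline\Omega}H(y,p+D\varphi(y))\le \overline H_\Omega(p),
\]
which then closes the chain of inequalities and yields both equalities in \eqref{eq:repC2all} at once.

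Fix $p$ and choose $R\ge|p|$. I take a $\Z^n$-periodic solution $v$ of the cell problem \eqref{eqn:cell}; it exists by the vanishing discount argument already quoted in the proof of Lemma \ref{lem:representationHother}. In particular $v$ is a periodic viscosity subsolution of $H(x,p+Dv)\le\overline H_\Omega(p)$ in $\Omega$. Since $\partial\Omega$ is $C^2$, Lemma \ref{lem:approxdomain} applies for every $0<\varepsilon<\tfrac1{32}\mathrm{reach}(\Sigma)$. It produces $\psi_\varepsilon\in C^\infty(\T^n)$ satisfying \eqref{eq:representationFReps} on $\mathcal D^{\varepsilon/2}(\Omega)$. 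This set contains $\overline\Omega$, because every point of $\overline\Omega$ has distance $0<\varepsilon/2$ to $\overline\Omega$. Hence
\[
\max_{y\in\overline\Omega}H(y,p+D\psi_\varepsilon(y))\le \overline H_\Omega(p)+2\omega_R\bigl(2\varepsilon(1+2\hat C_R\Lambda_\Sigma)\bigr).
\]
The maximum is attained because $H$ and $D\psi_\varepsilon$ are continuous and $\overline\Omega$ is compact in $\T^n$. Taking the infimum over $C^1(\T^n)$ on the left, then letting $\varepsilon\to0^+$, and using that $\omega_R$ is a modulus and that $\Lambda_\Sigma=8/\mathrm{reach}(\Sigma)$ is a fixed finite constant (positive reach comes from the $C^2$ lemma in the preliminaries), we obtain the reverse inequality.

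One point needs care. The modulus $\omega_R$ is taken on $\T^n\times\overline B(0,R+2\hat C_R)$, and Lemma \ref{lem:approxdomain} gives $|D\psi_\varepsilon|=|Dw_\varepsilon|\le2\hat C_R$ on $\mathcal D^{\varepsilon/2}(\Omega)$, so every momentum $p+D\psi_\varepsilon$ stays inside that compact set. All of the real difficulty, namely smoothing while keeping the subsolution inequality up to the boundary, is handled by the diffeomorphism $T_\delta$ of Lemma \ref{lem:distance} and the mollification in Lemma \ref{lem:approxdomain}. What is left here is bookkeeping: checking that $\overline\Omega\subset\mathcal D^{\varepsilon/2}(\Omega)$ and that the error term vanishes as $\varepsilon\to0^+$.
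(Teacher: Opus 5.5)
Your proposal is correct and follows the paper's proof essentially verbatim: you take the lower bound from \eqref{eq:inf-HbarC1C1}, then use the smooth periodic test function $\psi_\varepsilon$ from Lemma \ref{lem:approxdomain}, evaluate it on $\overline\Omega\subset\mathcal D^{\varepsilon/2}(\Omega)$, and let $\varepsilon\to0^+$ with $\Lambda_\Sigma$ fixed. You are in fact slightly more careful than the paper, which states the estimate for $\psi_\varepsilon$ on $\mathcal D^{\varepsilon}(\Omega)$; you correctly use $\mathcal D^{\varepsilon/2}(\Omega)$, the set where $\psi_\varepsilon=w_\varepsilon$ and the bound is actually guaranteed.
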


\begin{proof} It is clear from \eqref{eq:infsupformula}, and also recalling \eqref{eq:inf-HbarC1C1} that 
\begin{align}\label{eq:fi}
    \overline{H}_{\Omega}(p) 
    \leq 
        \inf_{\varphi\in C^1(\overline{\Omega})} \max_{y\in \overline{\Omega}}H(y,p+D\varphi(y))
    \leq 
        \inf_{\varphi\in C^1(\T^n)} \max_{y\in \overline{\Omega}}H(y,p+D\varphi(y)) . 
\end{align}
Let $p \in \mathbb{R}^n$ with $|p|\leq R$. 
By Lemma \ref{lem:repLip}, $\overline{H}_\Omega(p) \leq C_R$. Let $v\in C(\Omega)$ be a viscosity subsolution to \eqref{eqn:cell} with $v(0) = 0$. By Lemma \ref{lem:approxdomain} we obtain $\psi_{\varepsilon}\in C^\infty(\T^n)$ such that 
\begin{equation}\label{eq:pdeweps}
    H(x, p+D\psi_\varepsilon(x)) \leq \overline{H}_\Omega(p) + 2\omega_R(2\varepsilon (1+2\hat{C}_R\Lambda_\Sigma)) 
    \qquad\text{for every}\;x\in \mathcal{D}^\varepsilon(\Omega). 
\end{equation}
From \eqref{eq:pdeweps} and \eqref{eq:fi} we deduce that
\begin{align*}
    \overline{H}_{\Omega}(p) 
        & \leq 
        \inf_{\varphi\in C^1(\overline{\Omega})} \max_{x\in \overline{\Omega}}H(x,p+D\varphi(x))\\ 
        & \leq \inf_{\varphi\in C^1(\T^n)} \max_{x\in \overline{\Omega}}H(x,p+D\varphi(x))  \\ 
        &\leq 
    \max_{x\in \overline{\Omega} } H(x,p+D\psi_{\varepsilon}(x)) 
        \leq 
    \overline{H}_{\Omega}(p) + 2\omega_R\big(2\varepsilon\big(1+2\hat{C}_R\Lambda_\Sigma\big)\big) .
\end{align*}
Let $\varepsilon \to 0^+$, we deduce \eqref{eq:repC2all}. 
\end{proof}

The continuity of $\Omega \mapsto \overline{H}_\Omega(p)$ under outer approximations when $\Omega$ is $C^2$
follows directly from the inf--max representation \eqref{eq:repC2all}.

\begin{cor}\label{cor:RightContinuityC2Torus} Assume \ref{itm:A1}--\ref{itm:A2} and \ref{itm:D1}, and suppose further that $\Omega$ is of class $C^2$. For $\varepsilon>0$
\begin{equation}\label{eq:estimatedomain}
 \overline{H}_\Omega(p) \leq \overline{H}_{\mathcal{D}^\varepsilon(\Omega)}(p) \leq \overline{H}_\Omega(p) + 2\omega_R(4\varepsilon(1+2\hat{C}_R \Lambda_\Sigma)). 
\end{equation}
Then $\overline{H}_\Omega(p)$ depends continuously on $\Omega$ from the right side, in the sense that
    \begin{align}\label{eq:continuityRightHbar}
        \lim_{\varepsilon\to 0^+} \overline{H}_{\mathcal{D}^\varepsilon(\Omega)}(p) 
        = 
        \lim_{\varepsilon\to 0^+} \overline{H}_{\Omega+ B_\varepsilon(0)}(p) 
        = \overline{H}_\Omega(p). 
    \end{align}
    Equivalently, this yields
    \begin{align*}
        \lim_{
        \substack{
        d_{\mathcal{H}}(\Omega',\Omega) \to 0 \\ \Omega\subset\Omega'}
        }  \overline{H}_{\Omega'}(p) = \overline{H}_\Omega(p). 
    \end{align*}
\end{cor}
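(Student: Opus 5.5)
The lower bound in \eqref{eq:estimatedomain} is immediate from monotonicity \eqref{eq:monotonicityHbar}. Since $\Omega\subset\mathcal{D}^\varepsilon(\Omega)$ and both domains satisfy \ref{itm:D0}, we get $\overline H_\Omega(p)\le \overline H_{\mathcal D^\varepsilon(\Omega)}(p)$. (For small $\varepsilon$, $\mathcal D^\varepsilon(\Omega)=\{d>-\varepsilon\}$ is again a $C^2$ periodic domain, since $d$ is $C^2$ in $U_{\rho_\Omega}$.)

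For the upper bound I would combine Lemma \ref{lem:approxdomain} with the inequality \eqref{eq:inf-HbarC1C1} of Lemma \ref{lem:representationHother}, applied on the enlarged domain $\mathcal D^\varepsilon(\Omega)$.

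\begin{enumerate}[(1)]
\item Fix $|p|\le R$ and let $v$ be a periodic viscosity solution of \eqref{eqn:cell} on $\Omega$.
\item Apply Lemma \ref{lem:approxdomain} with $2\varepsilon$ in place of $\varepsilon$. This is allowed when $2\varepsilon<\frac1{32}\mathrm{reach}(\Sigma)$. It yields $\psi_{2\varepsilon}\in C^\infty(\T^n)$ with
\[
H(x,p+D\psi_{2\varepsilon}(x))\le \overline H_\Omega(p)+2\omega_R\big(4\varepsilon(1+2\hat C_R\Lambda_\Sigma)\big)
\qquad\text{for } x\in\mathcal D^{\varepsilon}(\Omega).
\]
\item By continuity, the same bound holds on $\overline{\mathcal D^\varepsilon(\Omega)}$. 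Here one uses that $\psi_{2\varepsilon}=w_{2\varepsilon}$ on a neighborhood of $\overline{\mathcal D^{\varepsilon}(\Omega)}$, because $w_{2\varepsilon}$ satisfies the estimate on all of $\mathcal D^{2\varepsilon}(\Omega)$.
\item The second inequality of \eqref{eq:inf-HbarC1C1} on the domain $\mathcal D^\varepsilon(\Omega)$ then gives
\[
\overline H_{\mathcal D^\varepsilon(\Omega)}(p)\le \max_{\overline{\mathcal D^\varepsilon(\Omega)}}H(x,p+D\psi_{2\varepsilon})\le \overline H_\Omega(p)+2\omega_R\big(4\varepsilon(1+2\hat C_R\Lambda_\Sigma)\big).
\]
Lemma \ref{lem:representationHother} applies here: it only needs \ref{itm:D1} (really only comparison, i.e.\ \ref{itm:D0}), which the $C^2$ domain $\mathcal D^\varepsilon(\Omega)$ satisfies.
\end{enumerate}

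For $\varepsilon$ not small, \eqref{eq:estimatedomain} should be understood for $\varepsilon<\frac1{64}\mathrm{reach}(\Sigma)$. Alternatively, for larger $\varepsilon$ one can use monotonicity together with the crude bound $\overline H\le C_R$, but I will state it for small $\varepsilon$.

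Letting $\varepsilon\to0^+$ in \eqref{eq:estimatedomain} gives \eqref{eq:continuityRightHbar}, since $\mathcal D^\varepsilon(\Omega)=\Omega+B_\varepsilon(0)$ by \eqref{eq:d2}.

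For the nested Hausdorff formulation, let $\Omega\subset\Omega'$ with $d_{\mathcal H}(\Omega,\Omega')<\varepsilon$. Then $\Omega'\subset\Omega+B_\varepsilon(0)=\mathcal D^\varepsilon(\Omega)$. By monotonicity,
\[
\overline H_\Omega(p)\le\overline H_{\Omega'}(p)\le \overline H_{\mathcal D^\varepsilon(\Omega)}(p),
\]
and the squeeze together with \eqref{eq:estimatedomain} concludes. This needs $\Omega'$ to satisfy \ref{itm:D0}, which I take as part of the admissible class, so that \eqref{eq:monotonicityHbar} applies.

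The main obstacle is step (2): the smooth test function must be admissible on the \emph{closed} enlarged domain, where the state constraint lives. This is why the mollified deformation is built on $\mathcal D^{2\varepsilon}$ (expansion by $\delta=4\varepsilon$) rather than $\mathcal D^\varepsilon$, which accounts for the factor $4\varepsilon$ in the modulus.
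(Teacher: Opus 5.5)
Your proof is correct and matches the paper's argument. You get the lower bound from monotonicity. You apply Lemma~\ref{lem:approxdomain} with $2\varepsilon$, so that the smooth $w_{2\varepsilon}$ controls $H$ on the closed set $\overline{\mathcal D^\varepsilon(\Omega)}$. You then use the inf--max upper bound \eqref{eq:inf-HbarC1C1} on the $C^2$ domain $\mathcal D^\varepsilon(\Omega)$ and let $\varepsilon\to0^+$.

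Your explicit restriction $\varepsilon<\frac{1}{64}\mathrm{reach}(\Sigma)$ makes precise the paper's ``$\varepsilon$ small enough''. Likewise, your requirement that $\Omega'$ satisfy \ref{itm:D0} in the nested formulation states what the paper uses implicitly.
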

\begin{proof} Let $|p|\leq R$. Since $\Omega$ is of class $C^2$, we obtain that $\mathcal{D}^\varepsilon(\Omega)$ is also of class $C^2$ for $\varepsilon$ small enough. For $\varepsilon>0$ small enough, the construction of $w_{2\varepsilon} \in C^\infty(\mathcal{D}^{2\varepsilon}(\Omega))$ in \eqref{eq:pdeweps} yields that 
\begin{equation*}
    \overline{H}_\Omega(p) \leq \overline{H}_{\mathcal{D}^\varepsilon(\Omega)}(p) \leq \max_{x\in \overline{\mathcal{D}^\varepsilon(\Omega)}} H(x,p+Dw_{2\varepsilon}(x)) \leq \overline{H}_\Omega(p) + 2\omega_R(4\varepsilon(1+2\hat{C}_R \Lambda_\Sigma)).
\end{equation*}
Let $\varepsilon\to 0^+$ we obtain the conclusion \eqref{eq:continuityRightHbar}. 
\end{proof}

\begin{rem}\label{rmk:infinityboundary} 
In the setting of second-order equations with state constraints (see \cite{lasry_nonlinear_1989}) in the subquadratic regime 
($1 < m \leq 2$), a result analogous to \eqref{eq:continuityRightHbar} for \eqref{eq:2nds} is proved in \cite[Proposition~2.1]{barles_large_2010}. This problem differs substantially from the first-order case, as boundary blow-up simplifies certain viscosity stability arguments. 
\end{rem}

\subsection{The inf-max formula for $C^1$ smooth domains} 
\label{subsection:C1domainInfMax}

\begin{lem} \label{lem:costrucOmegah}
Let $\Omega$ satisfy \ref{itm:D1}. There exists a sequence of nested increasing open, connected domains $\Omega_h$ of class $C^\infty$ such that $\Omega_h \subset \Omega_{h'}\subset\Omega$ if $0<h'<h<h_0$, and a modulus $\omega:[0,\infty)\to [0,\infty)$ such that
\begin{equation}\label{eq:convh}
    d_{\mathcal{H}}(\Omega_h, \Omega) \leq \frac{6h}{c_0}
    \qquad \text{and}\qquad 
    \Lambda_{\Sigma_h} = \frac{8}{\mathrm{reach}(\Sigma_h)} \leq  C_2\left(1+\frac{\omega(h)}{h}\right),
\end{equation}
where
\begin{equation}
\label{eq:C2}
   C_2 = \frac{8}{c^*}\max\left\lbrace 1, \frac{4K_0}{3c_0}\right\rbrace,
\end{equation}
here $c_0,c^*$ are constants from Lemmas \ref{lem:gC1} and \ref{lem:approxSigma}, respectively, and $K_0 = \Vert D\eta\Vert_{L^1(\T^n)}$ from Definition \ref{defn:FOmega}.
\end{lem}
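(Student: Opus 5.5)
We build $\Omega_h$ as sublevel sets of a mollified, shifted defining function. Let $g\in C^1(\T^n)$ be the defining function from Lemma \ref{lem:gC1}, and set $g^h=\eta_h*g\in C^\infty(\T^n)$. Let $\omega$ be the modulus of continuity of $Dg$, so that
\[
\|g^h-g\|_{L^\infty}\le \|Dg\|_\infty h \qquad\text{and}\qquad \|Dg^h-Dg\|_{L^\infty}\le \omega(h).
\]
To force nestedness we shift: put $g_h=g^h+\theta(h)$ with $\theta(h)$ slightly larger than $\|g^h-g\|_\infty$, and $\Omega_h=\{g_h<0\}$. Then $g_h>g$, hence $\Omega_h\subset\Omega$. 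The family $g_h$ still converges to $g$ in $C^1$, so Lemmas \ref{lem:S1}--\ref{lem:approxSigma} apply for $h<h_0$.

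\textbf{Monotonicity in $h$.} A naive shift does not give $\Omega_h\subset\Omega_{h'}$ for $h'<h$. I would try to make $h\mapsto g_h$ pointwise nonincreasing as $h\downarrow0$. Two options:
\begin{itemize}
\item restrict to a dyadic sequence $h_k=2^{-k}$ and choose shifts $\theta_k$ with $\theta_k-\theta_{k+1}\ge \|g^{h_k}-g^{h_{k+1}}\|_\infty$;
\item set $g_h=g^h+3\|Dg\|_\infty h$, using $|g^h-g^{h'}|\le \|Dg\|_\infty(h+h')\le 2\|Dg\|_\infty h$ for $h'<h$.
\end{itemize}
With the second choice, $g_h-g_{h'}\ge 3\|Dg\|_\infty(h-h')-\|Dg\|_\infty(h+h')$, which is not automatically $\ge 0$. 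So the dyadic route, or a shift growing faster, such as $\theta(h)=Ah$ with a telescoping check, is the safer plan. Connectedness of $\Omega_h$ should follow from uniform $C^1$ closeness: near $\partial\Omega$ the sets $\Omega_h$ and $\Omega$ are graphs over the same charts by Step 2 of Lemma \ref{lem:S1}. If needed, replace $\Omega_h$ by its component containing a large compact subset of $\Omega$.

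\textbf{Hausdorff bound.} Take $x\in\Omega\setminus\Omega_h$, so $g(x)<0\le g_h(x)$ and hence $|g(x)|\le \|g_h-g\|_\infty\lesssim h$. Near $\partial\Omega$, moving from $x$ in the direction $-Dg$ gives $|Dg|\ge c_0$ and $|Dg_h|\ge \tfrac34 c_0$ in $U_a$. Moving a distance $t$ therefore decreases $g_h$ by at least $\tfrac34c_0t$, so some point within distance of order $(\|g_h-g\|_\infty+\theta)/(\tfrac34 c_0)$ lies in $\Omega_h$. The bookkeeping should produce the constant $6h/c_0$, after normalizing $\|Dg\|_\infty$ or absorbing it into the choice of $h$ (for example, reparametrizing $h$ so that the shift is at most $\tfrac32 h$).

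\textbf{Reach bound.} Lemma \ref{lem:approxSigma} gives $\mathrm{reach}(\Sigma_h)\ge c^*/(1+K_{\Sigma_h})$, so
\[
\Lambda_{\Sigma_h}\le \frac{8}{c^*}\,(1+K_{\Sigma_h}).
\]
By Lemma \ref{lem:Kgi}, $K_{\Sigma_h}\le \|D^2g_h\|/\inf_{\Sigma_h}|Dg_h|$, and $\inf_{\Sigma_h}|Dg_h|\ge \tfrac34 c_0$ by \eqref{eq:BoundghBelow}. The key estimate is the mollifier bound on the second derivatives. Since $\int D\eta_h=0$, we have $D^2g^h=D\eta_h*(Dg-Dg(x))$, hence
\[
\|D^2g_h\|_\infty\le \frac{K_0}{h}\,\omega(h).
\]
Combining,
\[
\Lambda_{\Sigma_h}\le \frac{8}{c^*}\Bigl(1+\frac{4K_0}{3c_0}\,\frac{\omega(h)}{h}\Bigr)\le C_2\Bigl(1+\frac{\omega(h)}{h}\Bigr),
\]
which is exactly \eqref{eq:C2}.

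\textbf{Main obstacle.} The delicate point is arranging the shift so that the family is genuinely nested and increasing in the right direction while keeping the $6h/c_0$ Hausdorff constant. Smoothness of $\Omega_h$ follows from $g_h$ being $C^\infty$ with nonvanishing gradient on $\Sigma_h$.
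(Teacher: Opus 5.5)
Your curvature/reach step is essentially the paper's argument: Lemma \ref{lem:Kgi}, the bound \eqref{eq:BoundghBelow}, the cancellation $\int D\eta_h=0$ giving $\Vert D^2g_h\Vert_\infty\le K_0\omega(h)/h$, then Lemma \ref{lem:approxSigma}.

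\textbf{Nesting.} The genuine gap is the nesting $\Omega_h\subset\Omega_{h'}$ for all $0<h'<h<h_0$. You correctly identify it as the crux, but you leave it unproved. The dyadic route gives only a sequence, not the continuous family in the statement, and the ``telescoping check'' is never carried out. Your rejection of the shift $g^h+3\Vert Dg\Vert_\infty h$ rests on the lossy bound $|g^h-g^{h'}|\le\Vert Dg\Vert_\infty(h+h')$, obtained by passing through $g$. The missing idea is to differentiate the mollification in the scale parameter:
\[
\partial_h(\eta_h*g)(x)=-\int_{\R^n}Dg(x-hz)\cdot z\,\eta(z)\,dz .
\]
This already gives $|g^h-g^{h'}|\le\Vert Dg\Vert_\infty(h-h')$, so your second option is in fact monotone.

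The paper goes one step further by using that $\eta$ is even. Since $\int z\eta(z)\,dz=0$, one may subtract $Dg(x)$ inside the integral. This gives $|\partial_h(\eta_h*g)|\le\omega(h)$ and likewise $\Vert\eta_h*g-g\Vert_\infty\le h\,\omega(h)=o(h)$. This second-order cancellation is what makes the plain shift $g_h=\eta_h*g+2h$ work:
\begin{itemize}
\item $g_h\ge g+h$, so $\Omega_h\subset\Omega$;
\item $\partial_hg_h\ge 2-\omega(h)\ge 1$, which gives nesting;
\item $\Vert g_h-g\Vert_\infty\le(2+\omega(h))h\le 3h$, which fed into Lemma \ref{lem:xyhpoints} gives exactly $6h/c_0$.
\end{itemize}
No normalization of $\Vert Dg\Vert_\infty$ or reparametrization of $h$ is needed. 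By contrast, your first-order estimate $\Vert g^h-g\Vert_\infty\le\Vert Dg\Vert_\infty h$ forces a shift of size $\gtrsim\Vert Dg\Vert_\infty h$. That yields a Hausdorff constant depending on $\Vert Dg\Vert_\infty/c_0$, and your normalization fix is only gestured at.

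\textbf{Connectedness.} This is also only sketched. Local graph descriptions near $\partial\Omega$ do not by themselves rule out extra components. Your fallback of keeping a single component could destroy the Hausdorff bound, since points of $\Omega$ may lie near a discarded piece. It would also change $\Sigma_h$, so the reach estimate would have to be redone. The paper instead takes a periodic vector field $X$ with $Dg\cdot X>0$ near $\Sigma$. In the flow coordinates $\Psi(z,t)$ it writes $\Sigma_h$ as a $C^1$ graph $t=\theta_h(z)$ over $\Sigma$. From this it builds an explicit $C^1$ diffeomorphism $F_h$ of $\T^n$ with $F_h(\Omega)=\Omega_h$, so connectedness of $\Omega_h$ is inherited from $\Omega$.
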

\begin{proof} Let $g\in C^1(\T^n)$ be the level set of $\Omega$ as in Lemma \ref{lem:gC1}, and recall the uniform constants $r_0,h_0,c_0,C_0, c^*$ from Lemmas \ref{lem:gC1}, \ref{lem:approxSigma}.
We define
\begin{align*}
    g_h = g*\eta_h + 2h \qquad\text{for}\; 0<h<h_0.
\end{align*}
Then $g_h\to g$ uniformly in $C^1(\T^n)$. Let $\omega$ be the modulus of continuity of $Dg$ on $\T^n$. 
We have
\begin{equation*}
\begin{aligned}
    & \|g_h-g\|_{L^\infty(\mathbb T^n)} 
        \leq h\,\omega(h)+2h, \\
    & \|Dg_h-Dg\|_{L^\infty(\mathbb T^n)}
        \leq \omega(h),    \\
    & \|D^2g_h\|_{L^\infty(\mathbb T^n)}
        \leq K_0 h^{-1}\omega(h),
\end{aligned}
\end{equation*}
where $K_0 = \Vert D\eta\Vert_{L^1(\T^n)}$. 
We define $\Omega_h = \{x\in \T^n: g_h(x) <0\}$ and $\Sigma_h = \partial\Omega_h$, with $\Sigma = \partial\Omega$. 
\begin{itemize}
\item By shrinking $h_0$, we assume $\omega(h_0)\leq 1$. Since $\eta$ is even, we have $\int_{\R^n}z\eta(z)\;dz=0$. Thus
\begin{equation*}
    |g*\eta_h(x)-g(x)|\leq h\omega(h),
    \qquad g_h(x)\geq g(x)+h.
\end{equation*}
Therefore $\Omega_h\subset\Omega$. On the other hand, differentiating
$g*\eta_h(x)=\int_{\R^n}g(x-hz)\eta(z)\;dz$ with respect to $h$, we obtain
\begin{align*}
    \partial_h g_h(x)
    &=2-\int_{\R^n}\big(Dg(x-hz)-Dg(x)\big)\cdot z\eta(z)\;dz\\
    &\geq 2-\omega(h)\int_{\R^n}|z|\eta(z)\;dz\geq 1.
\end{align*}
Here we use $\mathrm{supp}(\eta)\subset B_1(0)$ and $\int_{\R^n}\eta(z)\;dz=1$. We deduce that $g_h(x)\geq g_{h'}(x)+(h-h')$ for $0<h'<h<h_0$, and thus $\Omega_h\subset\Omega_{h'}$.

\item 
We next show that $\Omega_h$ is connected for $h$ small enough. Let $\Sigma=\partial\Omega$. Since $Dg\neq0$ on $\Sigma$, we can choose a smooth periodic vector field $X$ such that $Dg\cdot X>0$ in a neighborhood of $\Sigma$. Let $\Psi(z,t)$ be its flow starting at $z\in\Sigma$. By the inverse function theorem and compactness of $\Sigma$, for some $\tau>0$, the map
\begin{equation*}
    \Psi:\Sigma\times(-\tau,\tau)\to V
\end{equation*}
is a $C^1$ diffeomorphism onto a neighborhood $V$ of $\Sigma$, with $g(\Psi(z,t))<0$ if and only if $t<0$.
Since $g_h\to g$ in $C^1$, the implicit function theorem gives $\theta_h\in C^1(\Sigma)$ such that
\begin{equation*}
    g_h(\Psi(z,\theta_h(z)))=0,\qquad
    \|\theta_h\|_{C^1(\Sigma)}\to0.
\end{equation*}
After shrinking $h_0$, these are all the zeros of $g_h$, and $g_h(\Psi(z,t))<0$ if and only if $t<\theta_h(z)$. Outside $V$, the signs of $g_h$ and $g$ agree.
Choose $\chi\in C_c^\infty((-\tau,\tau))$ with $\chi=1$ near $0$. We define
\begin{equation*}
    F_h(\Psi(z,t))=\Psi(z,t+\chi(t)\theta_h(z))
\end{equation*}
in $V$, and $F_h(x)=x$ outside $V$. For $h$ small enough, $1+\chi'(t)\theta_h(z)>0$, so $F_h$ is a $C^1$ diffeomorphism and $F_h(\Omega)=\Omega_h$. The construction is periodic and lifts to $\R^n$. Thus $\Omega_h$ is connected, since $\Omega$ is connected. Moreover, $Dg_h\cdot X>0$ on $\partial\Omega_h$, so $\Omega_h$ has $C^\infty$ boundary.
\end{itemize}
By Lemma \ref{lem:xyhpoints}, for every $x\in \overline{\Omega}$, there exists $y\in \overline{\Omega}_h$ such that
\begin{equation*}
   \mathrm{dist}(x,\overline\Omega_h)  \leq |x-y_h| \leq \frac{2}{c_0} \Vert g_h-g\Vert_{L^\infty(\T^n)} \leq \frac{2h}{c_0}(\omega(h)+2) \leq \frac{6h}{c_0}. 
\end{equation*}
Therefore
\begin{equation*}
    d_\mathcal{H}(\Omega, \Omega_h) = \sup_{x\in \overline{\Omega}} \mathrm{dist}(x,\overline{\Omega}_h) \leq \frac{6h}{c_0}.
\end{equation*}
From \eqref{eq:bound2normLevelSet} and the uniform lower bound of $Dg_h$ in \eqref{eq:BoundghBelow} we have
\begin{align*}
    K_{\Sigma_h} 
    \leq 
    \frac{\Vert D^2g_h\Vert_{L^\infty(\Sigma_h)}}{\inf_{\Sigma_h} |Dg_h|} 
    \leq 
    \frac{4K_0}{3c_0} \cdot \frac{\omega(h)}{h}.
\end{align*}
By Lemma \ref{lem:approxSigma} we have
\begin{align*}
    \mathrm{reach}(\Sigma_h) \geq  \frac{c^*}{1+K_{\Sigma_h}} \geq \frac{1}{\frac{1}{c^*}\max\left\{1, \frac{4K_0}{3c_0}\right\} \left(1+\frac{\omega(h)}{h}\right)} = \frac{8}{C_2}\left(\frac{h}{h+\omega(h)}\right).
\end{align*}
where $C_2$ is given as in \eqref{eq:C2}. Therefore
\begin{align*}
    \Lambda_{\Sigma_h} &= \frac{8}{\mathrm{reach}(\Sigma_h)} 
    \leq C_2\left(1+\frac{\omega(h)}{h}\right).
\end{align*}    
We obtain the conclusion \eqref{eq:convh}. 
\end{proof}

\begin{lem}\label{lem:xyhpoints} Under the assumptions of Lemma \ref{lem:costrucOmegah}, for every $x\in \overline{\Omega}$, there exists $y_h\in \overline{\Omega}_h$ such that 
\begin{equation*}
	|x-y_h| \leq \frac{2}{c_0}\Vert g_h-g\Vert_{L^\infty(\T^n)}.
\end{equation*} 
\end{lem}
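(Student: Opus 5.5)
Fix $x\in\overline{\Omega}$ and set $\theta:=\Vert g_h-g\Vert_{L^\infty(\T^n)}$. The plan is to move from $x$ a short distance in the direction of steepest descent of $g$ until $g$ drops below $-\theta$; at that point $g_h<0$, so we are in $\Omega_h$. Since $\Omega_h\subset\Omega$ and $g_h=g*\eta_h+2h$, we have $\theta\geq h>0$.

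\textbf{Trivial case.} If $g(x)<-\theta$, then $g_h(x)\leq g(x)+\theta<0$. Hence $x\in\Omega_h$, and we take $y_h=x$.

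\textbf{Main case.} Otherwise $-\theta\leq g(x)\leq 0$. We want $x$ to lie in the region $U_a$ of Lemma \ref{lem:gC1}, where $|Dg|\geq c_0$. Since $g<0$ on the compact set $\overline{\Omega}\setminus U_a$ (modulo $\Z^n$), we have
\begin{equation*}
\mu:=\min_{\overline{\Omega}\setminus U_a}|g|>0 .
\end{equation*}
For $h$ small we get $\theta<\mu$, which forces $x\in U_a$. If needed, we shrink $h_0$ further so that $\theta$ is small compared with $c_0a$.

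Consider the gradient flow
\begin{equation*}
\dot\gamma(t)=-\frac{Dg(\gamma(t))}{|Dg(\gamma(t))|},\qquad \gamma(0)=x .
\end{equation*}
Because $Dg$ is only continuous, this ODE has solutions by Peano's theorem, though they may not be unique; uniqueness is not needed. The curve has unit speed. As long as $\gamma(t)\in U_a$,
\begin{equation*}
\frac{d}{dt}\,g(\gamma(t))=-|Dg(\gamma(t))|\leq -c_0 .
\end{equation*}
Since $|\gamma(t)-x|\leq t$, the curve stays within distance $t$ of $x$. Moreover $g(\gamma(t))\leq g(x)-c_0t\leq -c_0t$, so the curve stays inside $\overline{\Omega}$.

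Let $t^*:=2\theta/c_0$. Then
\begin{equation*}
g(\gamma(t^*))\leq g(x)-2\theta\leq -\theta ,
\end{equation*}
and the inequality is strict unless $g(x)=0$. To avoid the borderline case cleanly, take $t^*$ slightly larger and then pass to the limit using closedness of $\overline{\Omega}_h$. This gives $g_h(\gamma(t^*))\leq g(\gamma(t^*))+\theta\leq 0$. Since $Dg_h\neq 0$ on $\Sigma_h$, the zero set of $g_h$ is $\partial\Omega_h$, so $\{g_h\leq 0\}=\overline{\Omega}_h$. Setting $y_h=\gamma(t^*)$ gives
\begin{equation*}
|x-y_h|\leq t^*=\frac{2\theta}{c_0}.
\end{equation*}

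\textbf{Main obstacle.} The delicate point is keeping $\gamma$ inside $U_a$ up to time $t^*$, so that the bound $|Dg|\geq c_0$ remains valid along the whole curve. The distance $\mathrm{dist}(\gamma(t),\partial\Omega)$ can grow by at most $t\leq t^*$. It suffices that $x\in U_{a/2}$ and $t^*<a/2$. Both hold once $\theta$ is small, since $\theta\to0$ as $h\to0$ by \eqref{eq:ghapprox}; here we shrink $h_0$ accordingly and use the positivity of $\mu$ on $\overline{\Omega}\setminus U_{a/2}$.

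If one prefers to avoid Peano's theorem, one can instead flow along a smooth vector field $X$ with $Dg\cdot X\geq \tfrac12 c_0|X|$ near $\Sigma$. This changes the constant $2/c_0$, so the constant in $U_a$ should first be arranged (as in Lemma \ref{lem:gC1}) to absorb the loss.
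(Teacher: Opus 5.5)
Your argument is correct, but it takes a different route from the paper's. The paper treats $x\in\overline{\Omega}\setminus\overline{\Omega}_h$, picks a nearest boundary point $x_0\in\pi_{\partial\Omega}(x)$, and moves along the straight segment $s\mapsto x-s\nu(x_0)$. To control the decrease of $g_h$ along this segment, it proves the directional bound $Dg_h(y)\cdot\nu(x_0)\geq c_0/2$ for $|y-x_0|\leq\kappa$. That bound needs both the uniform continuity of $Dg$ and the $C^1$-closeness $\Vert Dg_h-Dg\Vert_{L^\infty}\leq c_0/4$. Starting from $g_h(x)\leq\ell_h$, a segment of length $2\ell_h/c_0$ then brings $g_h$ down to $0$.

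You instead follow the normalized gradient flow of $g$ itself, so $g$ decreases at rate exactly $|Dg|\geq c_0$. You compare $g_h$ with $g$ only at the endpoint, through $g_h\leq g+\theta$. This uses only the $C^0$-closeness of $g_h$ to $g$ and no modulus of continuity for $Dg$ or $Dg_h$. The price is Peano's theorem plus a continuation argument, which you handle correctly: with $x\in U_{a/2}$, unit speed, and $t^*<a/2$, the maximal solution cannot leave $U_a$ before time $t^*$. The paper's segment argument is more elementary (only the fundamental theorem of calculus along a line), but it loses a factor $2$ in the decay rate.

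A few small points. Since $g(x)\leq 0$, your choice $t^*=2\theta/c_0$ gives $g(\gamma(t^*))\leq-2\theta$. Hence $g_h(\gamma(t^*))\leq-\theta<0$, and $\gamma(t^*)\in\Omega_h$ outright. So your remark that the inequality is strict \emph{unless $g(x)=0$} is inaccurate; it is always strict.

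For the same reason, the identification $\{g_h\leq0\}=\overline{\Omega}_h$ is not needed. As written, its justification is also circular: $Dg_h\neq0$ on $\Sigma_h$ says nothing about zeros of $g_h$ not already known to lie on $\Sigma_h$. It would instead follow from the fact that, for small $h$, every zero of $g_h$ lies in $U_a$, where $|Dg_h|\geq\frac34c_0$.

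Your limiting device (push slightly past $t^*$ to land in $\Omega_h$, then use closedness of $\overline{\Omega}_h$) is what lets you take $t^*=\theta/c_0$ instead. So your route actually yields the sharper constant $1/c_0$. Likewise, in your closing alternative, a smooth field with $Dg\cdot X\geq\frac12c_0|X|$ and $t^*=2\theta/c_0$ reproduces exactly the constant $2/c_0$, so no loss needs to be absorbed.
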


\begin{proof}[Proof of Lemma \ref{lem:xyhpoints}]
    For simplicity, we define
\begin{equation*}
    \ell_h = \Vert g_h-g\Vert_{L^\infty(\T^n)} \leq h\omega(h) + 2h.
\end{equation*}
By Lemma \ref{lem:gC1}, there exists $a>0$ such that 
    \begin{equation*}
        |Dg(x)| \geq c_0 
        \qquad\text{for all}\;
        x\in U_a = \{x\in \T^n: \mathrm{dist}(x,\partial\Omega) < a\}. 
    \end{equation*}
There exists $\kappa \in (0,a)$ such that $\omega(\kappa) \leq \frac{1}{4}c_0$. By shrinking $h_0$, we assume $h_0\leq\kappa$. Then
    \begin{align}
        & |g(x)-g(y)| \leq\frac{c_0}{4} &&\text{if}\;|x-y| \leq \kappa ,
        \label{eq:estig}\\
        & |Dg_h(x) - Dg(x)| \leq \frac{c_0}{4} &&\text{if}\; 0\leq h \leq \kappa. 
        \label{eq:estiDg}
    \end{align}
We choose $h \leq h_0$ small so that
    \begin{equation*}
        h \leq\kappa, \qquad r_h = \frac{4\ell_h}{c_0} \leq \kappa.
    \end{equation*}

\noindent
{\it \underline{Step 1.}} 
For $x_0\in \partial\Omega$ and $y\in \T^n$ with $|y-x_0|\leq \kappa$, by \eqref{eq:estiDg} we have
\begin{align*}
    Dg(y)\cdot \nu(x_0) = \big(Dg(y) - Dg(x_0)\big)\cdot\nu(x_0) + Dg(x_0)\cdot \nu(x_0) \geq \frac{3}{4}c_0.
\end{align*}
We deduce that 
\begin{align}\label{eq:Dgbig0}
    Dg_h(y)\cdot \nu(x_0) = \big(Dg_h(y) - Dg(y)\big)\cdot\nu(x_0) + Dg(y)\cdot \nu(x_0)  \geq \frac{1}{2}c_0
\end{align}
for all $0\leq h\leq h_0$ and $y\in\T^n$ such that $|y-x_0|\leq \kappa$. 

\medskip
\noindent
{\it \underline{Step 2.}}
Take $x\in \overline{\Omega}\setminus \overline{\Omega}_h$, we have
\begin{equation*}
    g(x) \leq  0 < g_h(x)
    \quad \Longrightarrow\quad 
    |g(x)|=-g(x) < g_h(x) - g(x) \leq \ell_h.
\end{equation*}
Let 
\begin{equation*}
    m_{\kappa/2} = \min \big\{|g(x)|: x\in \T^n\backslash U_{\kappa/2} \big\} > 0. 
\end{equation*}
We choose $h$ small such that 
\begin{equation*}
    |g(x)| \leq \ell_h < m_{\kappa/2} 
    \quad\Longrightarrow\quad 
    x\in U_{\kappa/2}
    \quad\Longrightarrow\quad 
    |x-x_0| < \frac{\kappa}{2} \quad\text{for all}\;x_0 \in \pi_{\partial\Omega}(x). 
\end{equation*}
Pick $x_0\in \pi_{\partial\Omega}(x)$, we define
\begin{equation*}
    y_s = x - s\nu(x_0), \quad s\in \left[0,\frac{r_h}{2}\right] 
    \qquad\Longrightarrow\qquad 
    |y_s - x_0| \leq |x-x_0| + \frac{r_h}{2} \leq \kappa. 
\end{equation*}
Therefore, by \eqref{eq:Dgbig0},
\begin{equation*}
    Dg_h(y_s)\cdot \nu(x_0) \geq \frac{1}{2}c_0 \qquad\text{for all}\;s\in \left[0,\frac{r_h}{2}\right].
\end{equation*}
Consequently, we have
\begin{equation*}
\begin{aligned}
    g_h\left(x-\frac{r_h}{2}\nu(x_0)\right) &= g_h(x) +  \int^{r_h/2}_0 \frac{d}{ds} \Big( g_h(x - s\nu(x_0)) \Big) \;ds \\
    &= g_h(x) -\int_0^{r_h/2} Dg_h(x-s\nu(x_0))\cdot \nu(x_0)\;ds 
    \leq \ell_h - \frac{r_h}{2}\cdot\frac{c_0}{2} =  0. 
\end{aligned}
\end{equation*}
Thus
\begin{equation*}
    y_{h} = x - \frac{r_h}{2}\nu(x_0)\in \overline{\Omega}_h.
\end{equation*}
Clearly $|x-y_h| \leq \frac{r_h}{2}$ and this yields the desired conclusion.
\end{proof}

We give a proof for Theorem \ref{thm:infsupC1}. 

\begin{proof}[Proof of Theorem \ref{thm:infsupC1}] Let $p \in \mathbb{R}^n$ with $|p|\leq R$. By \eqref{eq:infsupformula} we have
\begin{align}\label{eq:keyAA}
    \overline{H}_{\Omega}(p) &=    \inf_{\varphi \in \mathrm{Lip}(\T^n)} 
      \esssup_{y \in \overline{\Omega}} H(y, p + D\varphi(y)) \leq 
      \inf_{\varphi\in C^1(\T^n)} \max_{y \in \overline{\Omega}}H(y,p+D\varphi(y)). 
\end{align}
Let $\Omega_h$ be defined as in Lemma \ref{lem:costrucOmegah}. By \eqref{eq:convh} we have
\begin{equation}\label{eq:esss}
d_h =    d_{\mathcal{H}}(\Omega, \Omega_h) \leq \frac{6h}{c_0} \qquad\text{and}\qquad \Lambda_{\Sigma_h} \leq C_2\left(1+\frac{\omega(h)}{h}\right). 
\end{equation}
We have
\begin{align*}
	\mathrm{reach}(\Sigma_h) \geq  \frac{8}{C_2}\left(\frac{h}{h+\omega(h)}\right).
\end{align*}
Let $\delta>0$. We choose
\begin{equation*}
	\varepsilon = \varepsilon_h = \left(\frac{6}{c_0}+\delta\right)h.
\end{equation*}
We compute 
\begin{align*}
\frac{\varepsilon_h}{\frac{1}{32}\mathrm{reach}(\Sigma_h)} \leq \frac{\left(\frac{6}{c_0}+\delta\right)h}{\frac{1}{4C_2} \frac{h}{h+\omega(h)}} = 4C_2\left(\frac{6}{c_0}+\delta\right) (h+\omega(h)) < 1\qquad\text{for}\; h\;\text{small}.
\end{align*}
This range of $\varepsilon$ satisfies the condition of Lemma \ref{lem:approxdomain}, thus, there exists 
\begin{equation*}
	w_\varepsilon\in C^\infty(\mathcal{D}^{\varepsilon}(\Omega_h)), 
	\quad 
	\Vert Dw_\varepsilon\Vert_{L^\infty(\mathcal{D}^{\varepsilon}(\Omega_h))} \leq 2\hat{C}_R, 
	\quad
\sup_{x\in \mathcal{D}^{\varepsilon}(\Omega_h)}	\Vert D^2w_\varepsilon(x)\Vert_{\mathrm{op}} \leq \frac{2\hat{C}_RK_0}{\varepsilon},
\end{equation*}
such that
\begin{align}\label{eq:keyu}
	H(x,p+Dw_\varepsilon(x)) 
	&\leq 
	\overline{H}_{\Omega_h}(p) + 2\omega_R\left(2\varepsilon\left(1 + 2\hat{C}_R \Lambda_{\Sigma_h}\right)\right) 
	\nonumber \\
		&\leq 	\overline{H}_{\Omega_h}(p) + 2\omega_R\left(C_3\varepsilon\left(1+\frac{\omega(h)}{h}\right)\right)
		\nonumber \\
		&\leq 	\overline{H}_{\Omega_h}(p) + 2\omega_R\left(C_3\left(\frac{6}{c_0}+\delta\right)\left(h+\omega(h)\right)\right), \quad x\in \mathcal{D}^{\varepsilon}(\Omega_h)		
\end{align}
thanks to \eqref{eq:esss}, where $C_3 = 2+4\hat{C}_RC_2$.
We recall from \eqref{eq:convh} that 
\begin{equation*}
	\sup_{x\in \overline{\Omega}}\mathrm{dist}(x, \overline{\Omega}_h) \leq \frac{6}{c_0}h < \left(\frac{6}{c_0}+\delta\right)h = \varepsilon \qquad\Longrightarrow\qquad \overline{\Omega}\subset \mathcal{D}^\varepsilon(\Omega_h).
\end{equation*}
Therefore, by \eqref{eq:keyu} we deduce that
\begin{align*}
	\max_{x\in \overline{\Omega}} H(x,p+Dw_\varepsilon(x)) 
	\leq \overline{H}_{\Omega_h}(p) + 2\omega_R\left(C_3\left(\frac{6}{c_0}+\delta\right)\left(h+\omega(h)\right)\right).
\end{align*}
Since $\overline{\Omega}
\subset\subset \mathcal{D}^{\varepsilon}(\Omega_h)$ there exists a cutoff
$\chi\in C_c^\infty(\mathcal{D}^{\varepsilon}(\Omega_h))$
such that $\chi=1$ in a neighborhood of
$\overline{\Omega}$. Extending $\chi w_\varepsilon$
by zero outside $\mathcal{D}^{\varepsilon}(\Omega_h)$,
we obtain $\psi_\varepsilon\in C^\infty(\T^n)$
satisfying $D\psi_\varepsilon=Dw_\varepsilon$ on $\overline{\Omega}$. We have
\begin{align*}
	\overline{H}_\Omega(p) 
	&\leq \inf_{\psi\in C^1(\T^n)} 	\max_{x\in \overline{\Omega}} H(x,p+D\psi(x))  \\
	&\leq 
	\max_{x\in \overline{\Omega}} H(x,p+D\psi_\varepsilon(x)) 
	\leq \overline{H}_{\Omega_h}(p) + 2\omega_R\left( C_3\left(\frac{6}{c_0}+\delta\right)\left(h+\omega(h)\right)\right)
\end{align*}
thanks to \eqref{eq:keyAA}. Since $\Omega_h\subset\Omega$, we have $\overline{H}_{\Omega_h}(p)\leq \overline{H}_\Omega(p)$. Letting $h\to 0^+$ we obtain the conclusion \eqref{eq:infmaxformula}. 
\end{proof}

\begin{rem}
Every open, connected set \(\Omega\subset\mathbb R^n\) admits an exhaustion by bounded, smooth, connected domains $\{\Omega_k\}_{k=1}^\infty$ satisfying $\Omega_k\subset\subset\Omega_{k+1}\subset\subset\Omega$ and $\Omega=\bigcup_{k=1}^\infty\Omega_k$, see, e.g., \cite[Proposition 8.2.1]{daners_chapter_2008}. However, establishing \eqref{eq:infmaxformula} requires the finer control of the approximations provided by Lemma \ref{lem:costrucOmegah}. We note that an alternative, more involved approach to domain approximation is given in \cite{ball_partial_2017}, which applies to rough domains but provides only qualitative approximation and does not yield the quantitative geometric estimates required for our analysis.
\end{rem}

\begin{lem}\label{lem:convphi} Assume \ref{itm:A1}--\ref{itm:A2} and \ref{itm:D1}. We have:
    \begin{align}
         & \lim_{d_{\mathcal H}(\Omega',\Omega)\to 0}
    F_{\Omega'}(\varphi)=F_\Omega(\varphi) \qquad\text{for each fixed}\; \varphi\in C^1(\T^n), 
    \label{eq:estimatephi}\\ 
         & \limsup_{d_{\mathcal{H}}(\Omega',\Omega) \to 0}  \left(
    \inf_{\varphi\in C^1(\T^n)} F_{\Omega'}(\varphi) \right) 
        \leq 
    \inf_{\varphi\in C^1(\T^n)} F_\Omega(\varphi),  
    \label{eq:limsupLessThan}\\
        &\lim_{
        \substack{
        d_{\mathcal{H}}(\Omega',\Omega) \to 0 \\ \Omega\subset\Omega'}
        }  
        \left(
    \inf_{\varphi\in C^1(\T^n)} F_{\Omega'}(\varphi) \right) 
        =
    \inf_{\varphi\in C^1(\T^n)} F_\Omega(\varphi). 
    \label{eq:ingmaxRHS}
    \end{align}
    As a consequence, for every $p\in\R^n$, the map
    $\Omega\mapsto\overline H_\Omega(p)$ is upper
    semicontinuous with respect to Hausdorff convergence:
\begin{equation}\label{eq:stapp}
    \limsup_{d_\mathcal{H}(\Omega, \Omega')\to 0} \overline H_{\Omega'}(p)
\leq \overline H_\Omega(p) \qquad\text{and}\qquad \lim_{\substack{{d_\mathcal{H}(\Omega, \Omega')\to 0} \\
                \Omega\subset\Omega'}}
\overline H_{\Omega'}(p)
=\overline H_\Omega(p). 
\end{equation}
\end{lem}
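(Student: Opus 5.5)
The plan is to prove the three statements about $F_{\Omega'}$ directly from the definition \eqref{eq:FOmegaformula}, and then transfer them to $\overline H$ through Theorem \ref{thm:infsupC1}. Note that $\Omega'$ only enters through the set $\overline{\Omega'}$ over which the maximum is taken.

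\textbf{Step 1: proof of \eqref{eq:estimatephi}.} Fix $\varphi\in C^1(\T^n)$. The function
\[
G(x):=H(x,p+D\varphi(x))
\]
is continuous and $\Z^n$-periodic, hence uniformly continuous on $\R^n$; call its modulus $\omega_\varphi$. Let $x'\in\overline{\Omega'}$ attain $F_{\Omega'}(\varphi)$. By the Remark following the definition of Hausdorff distance, there is $x\in\overline\Omega$ with $|x-x'|\le d_{\mathcal H}(\Omega,\Omega')+\delta$. Hence
\[
F_{\Omega'}(\varphi)\le F_\Omega(\varphi)+\omega_\varphi\big(d_{\mathcal H}(\Omega,\Omega')+\delta\big).
\]
Exchanging the roles of $\Omega$ and $\Omega'$ and letting $\delta\to0$ gives
\[
|F_{\Omega'}(\varphi)-F_\Omega(\varphi)|\le\omega_\varphi\big(d_{\mathcal H}(\Omega,\Omega')\big)\to 0 .
\]
Maxima are attained because $\overline{\Omega'}$ is closed and periodic, so we may work on a compact fundamental cell.

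\textbf{Step 2: proof of \eqref{eq:limsupLessThan}.} This is the standard fact that an infimum of continuous functionals is upper semicontinuous. For $\eta>0$, pick $\varphi_\eta\in C^1(\T^n)$ with
\[
F_\Omega(\varphi_\eta)\le \inf_{C^1(\T^n)}F_\Omega+\eta .
\]
Then $\inf_{C^1(\T^n)}F_{\Omega'}\le F_{\Omega'}(\varphi_\eta)$, and by Step 1 its limsup as $d_{\mathcal H}(\Omega',\Omega)\to0$ is at most $\inf F_\Omega+\eta$. Let $\eta\to0$.

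\textbf{Step 3: proof of \eqref{eq:ingmaxRHS}.} When $\Omega\subset\Omega'$, monotonicity of the maximum in the domain gives $F_{\Omega}(\varphi)\le F_{\Omega'}(\varphi)$ for every $\varphi$. Hence $\inf F_\Omega\le \inf F_{\Omega'}$, which supplies the matching liminf bound; combined with Step 2, the limit exists and equals $\inf F_\Omega$.

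\textbf{Step 4: the consequence \eqref{eq:stapp}.} By Theorem \ref{thm:infsupC1}, $\overline H_\Omega(p)=\inf_{C^1(\T^n)}F_\Omega$. For a general admissible $\Omega'$ satisfying only \ref{itm:D0}, I would use the upper bound in \eqref{eq:inf-HbarC1C1}, which holds for any such domain by the argument of Lemma \ref{lem:representationHother} (comparison only needs \ref{itm:D0}): $\overline H_{\Omega'}(p)\le\inf_{C^1(\T^n)}F_{\Omega'}$. Taking the limsup and applying \eqref{eq:limsupLessThan} yields the first claim. For nested $\Omega\subset\Omega'$, monotonicity \eqref{eq:monotonicityHbar} gives $\overline H_\Omega(p)\le\overline H_{\Omega'}(p)$, so the liminf is at least $\overline H_\Omega(p)$; together with the first claim this gives the limit.

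\textbf{Main obstacle.} The only delicate point is Step 4's use of the inequality $\overline H_{\Omega'}\le\inf F_{\Omega'}$ when $\Omega'$ is not $C^1$. There I would check that the proof of \eqref{eq:inf-Hbar} uses only the existence of a cell solution and Soner's comparison, both available under \ref{itm:D0}. Alternatively, I would use the Lipschitz inf--sup formula \eqref{eq:infsupformula} directly: any $\varphi\in C^1(\T^n)$ is Lipschitz and $\operatorname{ess\,sup}\le\max$.
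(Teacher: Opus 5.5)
Your proposal is correct and follows essentially the paper's argument: the same modulus-of-continuity estimate for fixed $\varphi$ via the Hausdorff distance, the same ``infimum of continuous functionals'' argument for \eqref{eq:limsupLessThan}, monotonicity of $F$ under inclusion for \eqref{eq:ingmaxRHS}, and the transfer to $\overline H$ through Theorem \ref{thm:infsupC1}, \eqref{eq:monotonicityHbar} and \eqref{eq:infsupformula}. The paper obtains $\overline H_{\Omega'}(p)\le\inf_{C^1(\T^n)}F_{\Omega'}$ exactly by your ``alternative'', namely the Lipschitz inf--sup formula together with $\esssup\le\max$ for $C^1$ test functions, so the detour through the proof of Lemma \ref{lem:representationHother} is not needed.
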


\begin{proof} Let $\varphi\in C^1(\T^n)$, $\delta=d_{\mathcal H}(\Omega',\Omega)$, let $\omega_\varphi$ and $\omega_{H,\varphi}$ be moduli of continuity
of $D\varphi$ and $H$ on the relevant compact sets, respectively.
By the definition of Hausdorff distance,
\[
|F_{\Omega'}(\varphi)-F_\Omega(\varphi)|
\leq \omega_{H,\varphi}
\big(2\delta+\omega_\varphi(2\delta)\big)
\longrightarrow 0,
\]
which proves \eqref{eq:estimatephi}. 
    For a fixed $\varphi\in C^1(\T^n)$ we have $\inf_{\varphi\in C^1(\T^n)} F_{\Omega'}(\varphi) \leq F_{\Omega'}(\varphi)$, therefore
\begin{align*}
    \limsup_{d_{\mathcal{H}}(\Omega',\Omega) \to 0}  \left(
    \inf_{\varphi\in C^1(\T^n)} F_{\Omega'}(\varphi) \right)
        \leq  \limsup_{d_{\mathcal{H}}(\Omega',\Omega) \to 0} F_{\Omega'}(\varphi) = F_{\Omega}(\varphi) . 
\end{align*}
Taking the infimum over $\varphi\in C^1(\T^n)$ we deduce \eqref{eq:limsupLessThan}. If $\Omega\subset \Omega'$, then $F_{\Omega}(\varphi)\leq F_{\Omega'}(\varphi)$ for all $\varphi\in C^1(\T^n)$, hence
\begin{equation*}
    \inf_{\varphi\in C^1(\T^n)} F_{\Omega}(\varphi)
        \leq 
    \inf_{\varphi\in C^1(\T^n)} F_{\Omega'}(\varphi) \qquad\text{for all}\;\Omega\subset \Omega'. 
\end{equation*}
Therefore
\begin{align}\label{eq:limsupSubset}
    \inf_{\varphi\in C^1(\T^n)}F_{\Omega}(\varphi) \leq \liminf_{d_{\mathcal{H}}(\Omega',\Omega) \to 0}\left( \inf_{\varphi\in C^1(\T^n)} F_{\Omega'}(\varphi) \right).
\end{align}
From \eqref{eq:limsupLessThan} and \eqref{eq:limsupSubset} we obtain the conclusion \eqref{eq:ingmaxRHS}. The conclusion \eqref{eq:stapp} follows from the above results together with  \eqref{eq:infsupformula}, \eqref{eq:monotonicityHbar}, and Theorem \ref{thm:infsupC1}.
\end{proof}

We give a proof of Theorem \ref{thm:quan}.

\begin{proof}[Proof of Theorem \ref{thm:quan}]

Let $|p|\leq R$. Let $\Sigma' = \partial\Omega'$ and $r = \mathrm{reach}(\Sigma')$ and $d = d_{\mathcal{H}}(\Omega, \Omega')$. We recall that $\Lambda_{\Sigma'} = 8r^{-1}$. If $d = 0$ then the result follows immediately. Therefore we may assume without loss of generality that $d>0$.

\medskip
\noindent
{\it \underline{Case 1.}} Assume that 
\begin{equation*}
	0< d < \frac{r}{64}. 
\end{equation*}
Choose 
\begin{equation*}
	\varepsilon = 2d + \delta \in  \left(2d, \frac{r}{32}\right). 
\end{equation*}
In this case, since $d<\frac{\varepsilon}{2}$, we have
\begin{equation*}
		\overline{\Omega}\subset\mathcal{D}^{\varepsilon/2}(\Omega'). 
\end{equation*}
By Lemma \ref{lem:approxdomain} there exists $\psi_\varepsilon\in C^\infty(\T^n)$ and $w_\varepsilon\in C^\infty(\mathcal{D}^\varepsilon(\Omega'))$ such that $\psi_\varepsilon = w_\varepsilon$ in $\mathcal{D}^{\varepsilon/2}(\Omega')$ and
    \begin{equation}\label{eq:representationFRepsx}
        H(x,p+Dw_\varepsilon(x))
        \leq \overline{H}_{\Omega'}(p)
        +2\omega_R\big(2\varepsilon(1+2\hat{C}_R\Lambda_{\Sigma'})\big), \qquad x\in \mathcal{D}^{\varepsilon}(\Omega').
	\end{equation}     
By \eqref{eq:representationFRepsx} and \eqref{eq:infsupformula} we have		
\begin{align*}
	\overline{H}_{\Omega}(p) 
	&\leq \inf_{\psi\in C^1(\T^n)} \max_{x\in \overline{\Omega}} H(x,p+D\psi(x)) \\
	&\leq 
	 \max_{x\in \overline{\Omega}}H(x,p+D\psi_\varepsilon(x)) \\
	 	&\leq 
	 \max_{x\in \overline{\Omega}}H(x,p+Dw_\varepsilon(x)) 
        \leq 
        \overline{H}_{\Omega'}(p)
        +2\omega_R\big(2\varepsilon(1+2\hat{C}_R\Lambda_{\Sigma'})\big). 
\end{align*}
Thus
\begin{equation*}
	\overline{H}_{\Omega}(p)  -  \overline{H}_{\Omega'}(p)
	\leq 
	2\omega_R\big(2\varepsilon(1+2\hat{C}_R\Lambda_{\Sigma'})\big)
	= 2\omega_R\left(2\varepsilon\left(1+\frac{16\hat{C}_R}{r}\right)\right). 
\end{equation*}
Letting $\delta\to 0^+$, we have $\varepsilon\to 2d^+$ and
\begin{equation*}
	\overline H_\Omega(p)-\overline H_{\Omega'}(p)
	\leq 2\omega_R\left(	\tilde{C}_R d\left(1+\frac{1}{r}\right)\right)	
	\qquad \tilde{C}_R = 4\max \left\lbrace 1, 16 \hat{C}_R \right\rbrace. 
\end{equation*}

\medskip
\noindent
{\it \underline{Case 2.}} If 
\begin{equation*}
	d \geq \frac{r}{64} \qquad \Longrightarrow\qquad d\left(1+\frac{1}{r}\right) = d + \frac{d}{r} \geq \frac{1}{64}.
\end{equation*}
We recall that $\mathrm{diam}(\T^n) = \sqrt{n}$. Let $M_R=\sqrt{n}+2\hat C_R$. By Lemma \ref{lem:repLip} and the cell problem, we have
\begin{equation*}
	\min_{\substack{x\in\T^n\\|q-p|\leq\hat C_R}}
H(x,q)\leq \overline{H}_\Omega(p), \overline{H}_{\Omega'}(p) \leq \max_{x\in\T^n}H(x,p). 
\end{equation*}
Therefore
\begin{equation*}
	\overline H_\Omega(p)-\overline H_{\Omega'}(p) 
	\leq \omega_R\left( \sqrt{n} + 2\hat{C}_R\right) 
	\leq \omega_R\left( 64\left( \sqrt{n} + 2\hat{C}_R\right) d\left(1+\frac{1}{r}\right)\right). 
\end{equation*}
Thus, in both cases,
\begin{equation*}
	\overline H_\Omega(p)-\overline H_{\Omega'}(p)\leq 2\omega_R\left(
	\widetilde C_R
	\left(1+\frac1{\mathrm{reach}(\Sigma')}\right)
	d_{\mathcal H}(\Omega,\Omega')
	\right),
\end{equation*}
	where
	\begin{equation*}
	\widetilde C_R =\max\{4,64\hat C_R,64(\sqrt{n} + 2\hat{C}_R)\} = 64(\sqrt{n} + 2\hat{C}_R).
	\end{equation*}
This proves \eqref{eq:quant}. The sharpness is given by the example in Section \ref{sec:example}.
\end{proof}

\begin{proof}[Proof of Corollary \ref{cor:stabi}]
The conclusions of (i), (ii), and (iii) follow from Lemmas \ref{lem:innerH} and \ref{lem:convphi}. For (iv), by Theorem \ref{thm:quan} we have
\begin{align*}
    \lim_{k\to \infty} \frac{d_{\mathcal{H}}(\Omega_k,\Omega)}{\mathrm{reach}(\partial\Omega_k)} = 0 \qquad\text{implies}\qquad \overline{H}_{\Omega}(p) \leq \liminf_{k\to \infty} \overline{H}_{\Omega_k}(p) .
\end{align*}
Together with the upper semicontinuous $\limsup_{k\to \infty} \overline{H}_{\Omega_k}(p) \leq \overline{H}_\Omega(p)$ with respect to the Hausdorff distance in part (i) (or see also
\eqref{eq:stapp}) we deduce \eqref{eq:dH}. 
\end{proof}

\section{Examples}\label{sec:example}

\subsection{Hausdorff convergence alone does not guarantee stability}
We present an example to show that Hausdorff convergence alone does not guarantee stability of the effective Hamiltonian.
Let $f\in C(\T^n)$. We consider the classical mechanics Hamiltonian
    \begin{equation}\label{eq:classicalH}
        H(x,p) = \frac{1}{2}|p|^2 + f(x), \qquad (x,p)\in \T^n\times \R^n.
    \end{equation}
To estimate $\overline{H}_\Omega(p)$ for $p\in\R^n$, in addition to the inf-max formula in Theorem~\ref{thm:infsupC1}, we use the following elementary estimate, which is motivated by \cite{concordel_periodic_1997} (without boundary). The reason is that computing $\overline{H}_\Omega$ using curves is much more convenient, especially when dealing with domains with obstacles, such as perforated domains, compared to designing smooth test functions on complicated domains as in Theorem~\ref{thm:infsupC1}.

\begin{lem}\label{lem:compu} Let $\Omega$ satisfy \ref{itm:D1}.
Let $H$ be given in \eqref{eq:classicalH} with $f\equiv 0$. Then for $p\in \R^n$ we have
    \begin{align*}
        \overline{H}_{\Omega}(p) 
        =
        \sup_{k\in \Z^n\backslash \{0\}} \sup_{x\in \overline{\Omega}}
        \frac{1}{2}\left(\frac{|p\cdot k|}{\ell_{\overline{\Omega}}(x,x+k)}\right)^2,
    \end{align*}
    where $\ell_{\overline{\Omega}}$ is the distance on the periodic domain $\overline{\Omega}\subset \R^n$ (not on the torus), defined by
    \begin{equation*}
        \ell_{\overline{\Omega}}(x,y) = \inf 
        \left\lbrace 
        \int_0^1 |\dot\gamma(s)|\;ds: \gamma \in \mathrm{AC}\left([0,1];\overline{\Omega}\right),
        \gamma(0) = x, \gamma(1) = y
        \right\rbrace, \quad x,y\in \overline{\Omega}.
    \end{equation*}
\end{lem}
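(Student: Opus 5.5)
The plan is to prove the two inequalities separately, writing $M(p)$ for the right-hand side. Both directions rest on the subsolution characterization \eqref{eq:inf-Hbar} and on the state-constraint cell problem \eqref{eqn:cell}.

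\emph{Lower bound $\overline H_\Omega(p)\ge M(p)$.} By Theorem~\ref{thm:infsupC1} it is enough to show that $\max_{\overline\Omega}\tfrac12|p+D\varphi|^2\ge \tfrac12\big(|p\cdot k|/\ell_{\overline\Omega}(x,x+k)\big)^2$ for every $\varphi\in C^1(\T^n)$, every $k\in\Z^n\setminus\{0\}$ and every $x\in\overline\Omega$. Set $c=\max_{\overline\Omega}\tfrac12|p+D\varphi|^2$. Take an absolutely continuous curve $\gamma\subset\overline\Omega$ from $x$ to $x+k$. Periodicity gives $\varphi(x+k)=\varphi(x)$, so
\[
p\cdot k=\int_0^1 (p+D\varphi(\gamma))\cdot\dot\gamma\,ds\le \sqrt{2c}\,\mathrm{length}(\gamma).
\]
Replacing $k$ by $-k$ (the curve reversed, which also joins $x+k$ to $x$ inside $\overline\Omega$, and $\ell_{\overline{\Omega}}$ is translation invariant) controls $|p\cdot k|$. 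Taking the infimum over $\gamma$ and then over $\varphi$ gives the bound.

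\emph{Upper bound $\overline H_\Omega(p)\le M(p)$.} Here I would build a subsolution from a metric. Let $c>M(p)$ and $a=\sqrt{2c}$. Define $u(x)=\inf_{y\in\overline\Omega}\{a\,\ell_{\overline\Omega}(y,x)-p\cdot(x-y)\}$, an inf-convolution with the intrinsic metric. Periodicity of $\overline\Omega$ shows $u(x+k)-u(x)$ is controlled. More simply, set $v(x)=\inf_{y\in\overline\Omega\cap Y}\ldots$ and verify that $w(x)=u(x)+p\cdot x$ is finite. Finiteness follows because the hypothesis $a\,\ell_{\overline\Omega}(x,x+k)\ge |p\cdot k|$ excludes going to $-\infty$ along translates; note that $\ell_{\overline\Omega}(y,x)\ge|x-y|$ handles nonperiodic points via a cell decomposition. $u$ is $\Z^n$-periodic after this normalization. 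Since $\ell_{\overline\Omega}$ is locally Euclidean in the interior, $u$ satisfies $|p+Du|\le a$ in $\Omega$ in the viscosity sense. Hence $H(x,p+Du)\le c$ in $\Omega$, and \eqref{eq:inf-Hbar} gives $\overline H_\Omega(p)\le c$. Letting $c\downarrow M(p)$ concludes.

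\emph{Main obstacle.} The main obstacle is proving that $u$ is finite and periodic, i.e.\ that the infimum over all $y$ in the unbounded periodic set does not diverge. I would write $y=y_0+k$ with $y_0$ in a fundamental cell and use the triangle inequality $\ell_{\overline\Omega}(y_0+k,x)\ge \ell_{\overline\Omega}(y_0+k,y_0)-\ell_{\overline\Omega}(y_0,x)$ together with $a\,\ell(y_0,y_0+k)\ge|p\cdot k|$. This bounds the integrand below by $-a\,\ell(y_0,x)-|p|\,|x-y_0|$, which is uniform on compacts; connectedness of $\Omega$ together with the $C^1$ boundary keeps $\ell$ finite and locally Lipschitz.
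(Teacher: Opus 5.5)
Your proposal is correct and is essentially the paper's proof. The lower bound is the same chain-rule-along-curves estimate fed into the inf--max formula of Theorem~\ref{thm:infsupC1}, and the upper bound uses the same construction: an infimum built from the intrinsic metric $\ell_{\overline{\Omega}}$, controlled by $a\,\ell_{\overline{\Omega}}(y_0,y_0+k)\geq |p\cdot k|$ and the triangle inequality. The only differences are minor: you take the infimum over all $y\in\overline{\Omega}$ rather than over the orbit $x_0+\Z^n$ of a single base point, which makes $u$ exactly $\Z^n$-periodic by translation invariance so your detour through $v$ and the ``normalization'' are unnecessary; and you conclude via the subsolution characterization \eqref{eq:inf-Hbar}, whereas the paper uses a $W^{1,\infty}$ extension plus the inf--sup formula \eqref{eq:infsupformula}.
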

\begin{proof}
    We view $\Omega$ as a domain in the lifted space $\R^n$. 
    For $\varphi\in C^1(\T^n)$, we define 
    \begin{equation*}
        F_\varphi(x) = p\cdot x + \varphi(x)\in C^1(\R^n).     
    \end{equation*}
    For $k\in \Z^n$ and $z\in \overline{\Omega}$, we take $\gamma\in \mathrm{AC}([0,1];\overline{\Omega})$ with $\gamma(0) = z$ and $\gamma(1) = z+k$.
    Since $\varphi(x+k) = \varphi(x)$, we have
     \begin{equation*}
        F(x+k) - F(x) = p\cdot k = \int_0^1 \frac{d}{ds} F(\gamma(s))\;ds = \int_0^1 \big( p + D\varphi(\gamma(s))\big) \cdot \dot\gamma(s)\;ds.
    \end{equation*}
    Therefore
    \begin{align*}
        |p\cdot k| \leq \max_{x\in \overline{\Omega}} |p+D\varphi(x)| \int_0^1 |\dot\gamma(s)|\;ds
        \quad\Longrightarrow\quad
        |p\cdot k| \leq \left(\max_{x\in \overline{\Omega}} |p+D\varphi(x)|\right) \ell_{\overline{\Omega}}(z,z+k)
    \end{align*}
    by taking infimum over all admissible path $\gamma\in \mathrm{AC}([0,1];\overline{\Omega})$. We obtain that 
    \begin{equation*}
        \max_{x\in \overline{\Omega}} |p+D\varphi(x)|  
        \geq 
        \frac{|p\cdot k|}{\ell_{\overline{\Omega}}(z,z+k)}.
    \end{equation*}
    We deduce from the inf-max formula in Theorem \ref{thm:infsupC1}
    \begin{equation*}
        \overline{H}_\Omega(p) \geq \max_{x\in \overline{\Omega}} |p+D\varphi(x)|   \geq \sup_{k\in \Z^n\backslash \{0\}} \sup_{x\in \overline{\Omega}}
        \frac{1}{2}\left(\frac{|p\cdot k|}{\ell_{\overline{\Omega}}(x,x+k)}\right)^2. 
    \end{equation*}
    For the other direction of the inequality, we define for a fixed $x_0\in \overline{\Omega}$ the function
    \begin{align*}
        F(x) = \inf_{k\in \Z^n} \left\lbrace p\cdot k + M(x_0) \ell_{\overline{\Omega}}(x,x_0+k) \right\rbrace, \qquad x\in \overline{\Omega},
    \end{align*}
    where
    \begin{equation*}
        M = M(x_0) = \sup_{k\in \Z^n\backslash \{0\}} 
        \left(\frac{|p\cdot k|}{\ell_{\overline{\Omega}}(x,x+k)}\right).
    \end{equation*}
    Then
    \begin{align*}
        p\cdot k \geq -|p\cdot k| \geq - M\ell_{\overline{\Omega}}(x_0,x_0+k).
    \end{align*}
    Thus, for every $k\in \Z^n $ we have
    \begin{equation*}
        p\cdot k + M \ell_{\overline{\Omega}}(x,x_0+k) \geq M \Big(
        \ell_{\overline{\Omega}}(x,x_0+k) - \ell_{\overline{\Omega}}(x_0,x_0+k)
        \Big) \geq -M \ell_{\overline{\Omega}}(x,x_0).
    \end{equation*}
    Thus $M \ell_{\overline{\Omega}}(x,x_0) \geq F(x)\geq -M \ell_{\overline{\Omega}}(x,x_0)$ for all $x\in \overline{\Omega}$, and for all $m\in \Z^n$ we have
    \begin{align*}
        F(x+m) &= \inf_{k\in \Z^n} \left\lbrace p\cdot k + M\cdot \ell_{\overline{\Omega}}(x+m,x_0+k) \right\rbrace  \\
        &= \inf_{k+m \in \Z^n} \left\lbrace p\cdot (k+m) + M\cdot \ell_{\overline{\Omega}}(x+m,x_0+k+m) \right\rbrace  \\
        &= \inf_{k\in \Z^n} \left\lbrace p\cdot (k+m) + M\cdot \ell_{\overline{\Omega}}(x,x_0+k) \right\rbrace  = F(x) + p\cdot m.
    \end{align*}
    This also implies that $x\mapsto \varphi(x) = F(x) - p\cdot x$ is $\Z^n$-periodic, since
    \begin{equation*}
        F(x+m)- (p+m)\cdot x = F(x)-p\cdot x \qquad\text{for all}\;x\in \overline{\Omega}, m\in \Z^n.
    \end{equation*}
    We also have
    \begin{align*}
        |F(x) - F(y)| \leq M|x-y|, \qquad x,y\in \Omega\;\text{such that}\;[x,y]\subset \Omega.
    \end{align*}
    Thus $|DF(x)|\leq M$ for a.e. $x\in \Omega$. 
    Since $\partial\Omega$ is $C^1$, we can extend $\varphi(x) = F(x)-p\cdot x$ to $\mathrm{Lip}(\R^n)$ by Sobolev embedding $W^{1,\infty}(\Omega)\to W^{1,\infty}(\R^n)$. We obtain that $\varphi$ is an admissible Lipschitz function to the inf-sup formula \eqref{eq:infsupformula}
    \begin{align*}
        \overline{H}_\Omega(p) &\leq \esssup_{x\in \Omega} H(x,p+D\varphi(x)) \\
        &= \esssup_{x\in \Omega} \frac{1}{2}|p+D\varphi(x)|^2 \leq \frac{1}{2}M^2 = \sup_{k\in \Z^n\backslash \{0\}}
        \frac{1}{2}\left(\frac{|p\cdot k|}{\ell_{\overline{\Omega}}(x_0,x_0+k)}\right)^2.
    \end{align*}
    We obtain the conclusion.
\end{proof}

\begin{example} \label{example:sharpNonconv}
    Let $n=2$ and $\Omega=\T^2$. Let $H$ be given in \eqref{eq:classicalH} with $f\equiv 0$. 
    There exists a sequence of nested increasing domains $\{\Omega_h\}_{h>0}$ satisfying \ref{itm:D1} such that
    \begin{equation*}
        \lim_{h\to 0^+} d_{\mathcal{H}}(\Omega_h, \Omega) = 0, 
        \qquad
        \lim_{h\to 0^+} \frac{d_{\mathcal{H}}(\Omega_h, \Omega)}{\mathrm{reach}(\partial\Omega_h)} = \infty
        \qquad
        \lim _{h\to 0^+} \overline{H}_{\Omega_h}(e_1) = \frac{2}{5} \neq \frac{1}{2}=\overline{H}_{\Omega}(e_1).
    \end{equation*}
\end{example}

\begin{proof}

    Let $f\equiv 0$. Then it is clear that 
    \begin{equation*}
        \overline{H}_{\T^n}(p) = \frac{1}{2}|p|^2 \qquad\text{for all}\;p\in \R^n.
    \end{equation*}

    \noindent
    For $h>0$, we define the ellipse 
    \begin{equation*}
        E_h = \left\lbrace (x,y)\in \R^2: \left(\frac{x}{h}\right)^2 + \left(\frac{y}{a}\right)^2 \leq 1 \right\rbrace, \qquad a = \frac{3}{8}. 
    \end{equation*}
    We can compute that 
    \begin{equation*}
        \mathrm{reach}(\partial E_h) = \min \left\{a, \frac{h^2}{a}\right\} = \frac{8}{3}h^2.
    \end{equation*}
    As illustrated in Figure \ref{fig:OmegahEllipse}, the domain $\Omega_h$ is defined as follows:
    \begin{equation*}
        \Omega_h = \T^2 \backslash 
        \left[ 
        \left( E_h + (0,\tfrac{1}{2}) \right) 
        \cup 
        \left( E_h + (\tfrac{1}{2}, 0) \right) 
        \right].
    \end{equation*}
    
\begin{figure}[htbp]
    \centering
\begin{tabular}{cc}

\begin{minipage}{0.48\textwidth}

    \begin{tikzpicture}[scale=5]
    \def\h{0.1}
    \def\a{3/8}

    \fill[blue!10] (-0.5,-0.5) rectangle (0.5,0.5);

    \begin{scope}
    \clip (-0.5,-0.5) rectangle (0.5,0.5);

    \fill[white] (0,0.5)
        ellipse [x radius=\h, y radius=\a];

    \fill[white] (0,-0.5)
        ellipse [x radius=\h, y radius=\a];

    \fill[white] (0.5,0)
        ellipse [x radius=\h, y radius=\a];

    \fill[white] (-0.5,0)
        ellipse [x radius=\h, y radius=\a];

    \draw[thick,red] (0,0.5)
        ellipse [x radius=\h, y radius=\a];

    \draw[thick,red] (0,-0.5)
        ellipse [x radius=\h, y radius=\a];

    \draw[thick,red] (0.5,0)
        ellipse [x radius=\h, y radius=\a];

    \draw[thick,red] (-0.5,0)
        ellipse [x radius=\h, y radius=\a];
        
    \end{scope}

    \draw[thick] (-0.5,-0.5) rectangle (0.5,0.5);

    \fill (0,1/2) circle (0.3pt);
    \node[above right] at (0,1/2) {$(0,\frac{1}{2})$};

    \fill (-1/2,-3/8) circle (0.3pt);
    \node[above left] at (-1/2,-3/8) {$(-\frac{1}{2},-\frac{3}{8})$};

    \fill (1/2,-3/8) circle (0.3pt);
    \node[above right] at (1/2,-3/8) {$(\frac{1}{2},-\frac{3}{8})$};

    \fill (0,-1/8) circle (0.3pt);
    \node[above right] at (0,-1/8) {$(0,-\frac{1}{8})$};

    \end{tikzpicture}

\end{minipage}
    & \qquad \qquad

    \begin{minipage}{0.48\textwidth}
    \begin{tikzpicture}[scale=5]
    \def\h{0.1}
    \def\a{3/8}

    \fill[blue!10] (-0.5,-0.5) rectangle (0.5,0.5);

    \begin{scope}
    \clip (-0.5,-0.5) rectangle (0.5,0.5);

    \fill[white] (0,0.5)
        ellipse [x radius=\h, y radius=\a];

    \fill[white] (0,-0.5)
        ellipse [x radius=\h, y radius=\a];

    \fill[white] (0.5,0)
        ellipse [x radius=\h, y radius=\a];

    \fill[white] (-0.5,0)
        ellipse [x radius=\h, y radius=\a];

    \draw[thick,red] (0,0.5)
        ellipse [x radius=\h, y radius=\a];

    \draw[thick,red] (0,-0.5)
        ellipse [x radius=\h, y radius=\a];

    \draw[thick,red] (0.5,0)
        ellipse [x radius=\h, y radius=\a];

    \draw[thick,red] (-0.5,0)
        ellipse [x radius=\h, y radius=\a];
        
    \end{scope}

    \draw[thick] (-0.5,-0.5) rectangle (0.5,0.5);

    \fill (0,1/2) circle (0.3pt);
    \node[above right] at (0,1/2) {$(0,\frac{1}{2})$};

    \fill (-1/2,-3/8) circle (0.3pt);
    \node[above left] at (-1/2,-3/8) {$x_0$};

    \fill (1/2,-3/8) circle (0.3pt);
    \node[above right] at (1/2,-3/8) {$y_0$};

    \fill (0,-1/8) circle (0.3pt);
    \node[above right] at (0,-1/8) {$\gamma_0$};

    \coordinate (A) at (-1/2,-3/8);
    \coordinate (B) at (0,-1/8);
    \coordinate (C) at (1/2,-3/8);
   \draw[thick, solid, black] (A) -- (B) -- (C);

    \def\eps{0.1}
    
    \coordinate (A1) at (-1/2+\eps,-3/8);
    \coordinate (B1) at (-\eps,-1/8);
    \coordinate (B2) at (+\eps,-1/8);
    \coordinate (C1) at (1/2-\eps,-3/8);
    \draw[thick, densely dashed, blue] (A) -- (A1) -- (B1) -- (B2) -- (C1) -- (C);
    \fill (0,-1/8) circle (0.3pt);
    \node[below right] at (A1) {$\gamma_\varepsilon$};
\end{tikzpicture}
    \end{minipage}

\end{tabular}

    \caption{The domain $\Omega_h$, the path $\gamma_0$ joining $x_0$ and $y_0$ through $\left(0,-\frac{1}{8}\right)$ by straight lines, and the approximated path $\gamma_\varepsilon$.}
    \label{fig:OmegahEllipse}
\end{figure}

    For $0 < h\ll 1$, $\Omega_h$ is open, connected, with a smooth boundary. We have
    \begin{align*}
        d_{\mathcal{H}}(\Omega_h, \T^n) = \sup_{x\in \T^n}\mathrm{dist}(x,\overline{\Omega}_h) = h \qquad\text{and}\qquad 
        \mathrm{reach}(\partial\Omega_h) = \frac{8}{3}h^2. 
    \end{align*}
    Therefore
    \begin{equation*}
        \frac{d_\mathcal{H}(\Omega_h,\T^2)}{\mathrm{reach}(\partial\Omega_h)} = \frac{3h}{8h^2} = \frac{3}{8h} \to +\infty \qquad\text{as}\qquad h\to 0^+. 
    \end{equation*}
    We use Lemma \ref{lem:compu} to compute $\overline{H}(p)$ with $p = e_1 = (1,0)$ and $k=(1,0)$. We have
    \begin{equation*}
        \overline{H}_{\overline{\Omega}_h}(e_1) \geq 
        \frac{1}{2}\left( \frac{|e_1\cdot (1,0)|}{\ell_{\overline{\Omega}_h}(x_0,y_0)} \right)^2  = 
        \frac{1}{2}\left( \frac{1}{\ell_{\overline{\Omega}_h}(x_0,y_0)} \right)^2,
    \end{equation*}
    where
    \begin{equation*}
        x_0 = \left(-\frac{1}{2}, -\frac{3}{8}\right), \qquad y_0 = x_0+(1,0) = \left(\frac{1}{2}, -\frac{3}{8}\right). 
    \end{equation*}
    \begin{itemize}
        \item Lower bound of $\overline{H}_{\Omega_h}(e_1)$.
        We note that $\gamma_0$ is not admissible, as $\gamma_0([0,1])\not\subset \overline{\Omega}_h$. 
    Let $z_0=\left(-\frac{1}{2},-\frac{3}{8}\right)$.
    For $\varepsilon>0$, let $\gamma_\varepsilon$ be the piecewise linear path joining
    \begin{equation*}
        x_0 \to x_0+\varepsilon e_1
\to z_0-\varepsilon e_1
\to z_0+\varepsilon e_1
\to y_0-\varepsilon e_1
\to y_0
    \end{equation*}
in the indicated order as in Figure \ref{fig:C1domain}. Since $\gamma_h$ (when $\varepsilon=h$) is admissible as an absolutely continuous path, we have
\begin{align*}
    \ell_{\overline{\Omega}}(x_0,y_0) \leq \mathrm{length}(\gamma_h) =  4h + 2\sqrt{\left(\frac{1}{2} - 2h\right)^2 + \left(\frac{1}{4}\right)^2} .
\end{align*}
    By Lemma \ref{lem:compu} we obtain
    \begin{align*}
        \overline{H}_{\Omega_h}(e_1) \geq \frac{1}{2}\left( \frac{1}{ \mathrm{length}(\gamma_h)}\right)^2
        \qquad \Longrightarrow\qquad 
        \liminf_{h\to 0^+}\overline{H}_h(e_1) \geq \frac{2}{5}.
    \end{align*}

    \item Upper bound of $\overline{H}_{\Omega_h}(e_1)$.
    We compute
    \begin{equation*}
        \ell_{\overline{\Omega}_h}(x_0,y_0) \geq \mathrm{length}(\gamma_0) = \frac{\sqrt{5}}{2} 
        \qquad\Longrightarrow\qquad 
        \frac{1}{2}\left(\frac{|e_1\cdot e_1|}{\ell_{\overline{\Omega}_h}(x_0,y_0)} \right)^2\leq \frac{2}{5}.
    \end{equation*}
    For $k = (k_1,k_2)\in \Z^2$, we have
    \begin{align*}
        \ell_{\overline{\Omega}_h}(x_0,x_0+(k_1,k_2)) \geq \max \left\lbrace 
        \frac{\sqrt{5}}{2}|k_1|, |k_2|
        \right\rbrace . 
    \end{align*}
    thanks to the calculation of $\gamma_0$ for $(1,0)$ direction, while for $(0,1)$ direction we simply take straight line (not always admissible, but enough for our purpose). Hence
    \begin{equation*}
        \sup_{(k_1,k_2)\in \Z^2 \backslash \{0,0\}}\frac{|(1,0)\cdot (k_1,k_2)|}{\ell_{\overline{\Omega}_h}(x_0,x_0+(k_1,k_2))} \leq \frac{2}{\sqrt{5}}.
    \end{equation*}
    Therefore, by Lemma \ref{lem:compu} we have 
    \begin{equation*}
        \overline{H}_h(e_1) \leq \frac{2}{5}.
    \end{equation*}
    \end{itemize}
    
    We obtain the desired conclusion.
    \end{proof}

    \begin{rem}
    Although $\Omega_h$ increases as $h\to 0^+$, its
    union is the torus with the two limiting closed vertical
    segments removed, rather than the full torus.
    Thus the example does not contradict continuity under
    inner exhaustion in Corollary \ref{cor:stabi}-(iii).
    \end{rem}

    \subsection{Examples attaining the modulus rate with constant reach}
    In the next example, we show a situation where the quantitative rate 
    in Theorem \ref{thm:quan} is sharp.  If $\Omega$ satisfies \ref{itm:D0} and $H$ is given in \eqref{eq:classicalH}, then thanks to the inf-sup formula \eqref{eq:infsupformula} we obtain
    \begin{equation*}
        \overline{H}_\Omega(p) = \inf_{\varphi\in C^1(\T^n)} \max_{x\in \overline{\Omega}} \left(\frac{1}{2}|p+D\varphi(x)|^2 + f(x)\right). 
    \end{equation*}
    It is clear that 
    \begin{equation}\label{eq:Hbar0}
        \overline{H}_\Omega(0) = \max_{x\in \overline{\Omega}} f(x)    .
    \end{equation}

    \begin{example}
    Let $r\in (0,\frac{1}{4})$ and $\Omega = \T^n\backslash B_r(0)$. Let $H$ be given in \eqref{eq:classicalH} with 
    \begin{equation*}
        f(x) = -d_{\T^n}(x,x_0), \qquad 
        x\in \T^n,
        \qquad \text{where}\qquad x_0 \in \partial B_r(0).
    \end{equation*}
    For $0<h\ll 1$, we define $\Omega_h = \T^n\backslash B_{r+h}(0)$. Then
    \begin{equation*}
        \overline{H}_\Omega(0)
        -
        \overline{H}_{\Omega_h}(0) =  d_{\mathcal{H}}(\Omega, \Omega_h) = h, \qquad 
        \frac{d_\mathcal{H}(\Omega, \Omega_h)}{\mathrm{reach}(\partial\Omega_h)} = \frac{h}{r+h}.
    \end{equation*}        
    \end{example}
    \begin{proof} 
    It is clear that $\mathrm{reach}(\partial\Omega_h) = r+h$ and $d_{\mathcal{H}}(\Omega, \Omega_h) = h$. We have $\overline{H}_\Omega(0) = 0$, $\overline{H}_{\Omega_h}(0) = -h$, and thus the conclusion follows.
    \end{proof}

\subsection{Examples attaining the modulus rate with vanishing reach}

Next, we construct a $C^1$ domain and a continuous Hamiltonian for which the estimate in Theorem~\ref{thm:quan} is attained in order. The pictures of $\Omega$ and $\Omega_h$ are given in Figure \ref{fig:C1domain}.  We work on $\mathbb{T}^2$ and fix $0<\delta<1/8$. We first construct the sequence of domains $\Omega$ and $\Omega_h$ as follows.

\begin{itemize}
    \item (Construction of $\Omega$) In the local chart $U=(-2\delta,2\delta)^2$ centered at $z_0=(0,0)$, we choose a disk-like hole whose upper boundary is the graph
\[
    x_2=f(x_1)=|x_1|^{3/2}.
\]
Choose the rest of the boundary smooth, and let $\Omega$ be the complement of the closed hole. Thus, locally, $\Omega=\{x_2>f(x_1)\}$; it is connected and periodic, and its boundary is $C^1$ but not $C^2$ at $z_0$.

\item (Construction of $\Omega_h$) Fix $\chi\in C_c^\infty(( -\delta,\delta))$ with $0\leq\chi\leq1$ and $\chi=1$ near $0$. For sufficiently small $h>0$, replace the graph in $U$ by
\[
    f_h(x_1)=f(x_1)+\chi(x_1)\big((x_1^2+h^2)^{3/4}-|x_1|^{3/2}\big).
\]
\end{itemize}
\begin{figure}[ht]
\centering
\begin{tikzpicture}[scale=2.1, line cap=round,
                    line join=round, >=stealth]

    \fill[blue!7] (-1.7,-1.55) rectangle (1.7,1.65);
    \draw[dashed, gray!65]
        (-1.7,-1.55) rectangle (1.7,1.65);

    \fill[white]
        (-1,1)
        plot[domain=-1:1, samples=150]
            (\x,{abs(\x)^1.5})
        .. controls (1.5,1.75) and (1.7,-0.95) ..
            (0,-1.05)
        .. controls (-1.7,-0.95) and (-1.5,1.75) ..
            (-1,1)
        -- cycle;

    \draw[very thick]
        plot[domain=-1:1, samples=150]
            (\x,{abs(\x)^1.5});

    \draw[very thick]
        (1,1)
        .. controls (1.5,1.75) and (1.7,-0.95) ..
            (0,-1.05)
        .. controls (-1.7,-0.95) and (-1.5,1.75) ..
            (-1,1);

    \fill (0,0) circle (0.027);

    \node[font=\large] at (0,1.42) {$\Omega$};
    \node[font=\small] at (0,-0.65) {hole};

    \node[font=\small, anchor=west]
        at (0.6,0.5) {$x_2=|x_1|^{3/2}$};

    \node[font=\small, anchor=east]
        at (0.1,-0.2) {}; 

\end{tikzpicture}
\begin{tikzpicture}[scale=2.1, line cap=round,
                    line join=round, >=stealth]

    \def\h{0.25}
    \def\d{0.75}

    \pgfmathdeclarefunction{cutoff}{1}{%
      \pgfmathparse{abs(#1)<\d ?
        (1-(#1/\d)^2)^2 : 0}%
    }

    \pgfmathdeclarefunction{orig}{1}{%
      \pgfmathparse{abs(#1)^1.5}%
    }

    \pgfmathdeclarefunction{smoothf}{1}{%
      \pgfmathparse{
        orig(#1)+cutoff(#1)*
        ((#1*#1+\h*\h)^0.75-orig(#1))
      }%
    }

    \fill[blue!7] (-1.7,-1.55) rectangle (1.7,1.65);
    \draw[dashed, gray!65]
        (-1.7,-1.55) rectangle (1.7,1.65);

    \fill[white]
        (-1,1)
        plot[domain=-1:1,samples=180]
            (\x,{orig(\x)})
        .. controls (1.5,1.75) and (1.7,-0.95) ..
            (0,-1.05)
        .. controls (-1.7,-0.95) and (-1.5,1.75) ..
            (-1,1)
        -- cycle;

    \fill[orange!35]
        plot[domain=-1:1,samples=180]
            (\x,{orig(\x)})
        --
        plot[domain=1:-1,samples=180]
            (\x,{smoothf(\x)})
        -- cycle;

    \draw[black,very thick,dashed]
        plot[domain=-1:1,samples=180]
            (\x,{orig(\x)});

    \draw[red!75!black,very thick]
        plot[domain=-1:1,samples=180]
            (\x,{smoothf(\x)});

    \draw[black,very thick]
        (1,1)
        .. controls (1.5,1.75) and (1.7,-0.95) ..
            (0,-1.05)
        .. controls (-1.7,-0.95) and (-1.5,1.75) ..
            (-1,1);

    \fill (0,0) circle (0.025);
    \node[below left,font=\small]
        at (-0.03,-0.02) {$z_0$};

    \fill[red!75!black] (0,{\h^1.5}) circle (0.025);
    \draw[<->,thin]
        (0.08,0.015) -- (0.08,{\h^1.5-0.015});
    \node[right,font=\small]
        at (0.1,0.07) {$h^{3/2}$};

    \node[font=\large] at (0,1.42) {$\Omega_h$};
    \node[font=\small] at (0,-0.65) {hole};

    \node[font=\small,anchor=west]
        at (0.65,0.43) {$x_2=|x_1|^{3/2}$};

    \node[font=\small,red!75!black,anchor=west]
        at (0.65,0.68) {$x_2=f_h(x_1)$};

\end{tikzpicture}

\caption{Inward smoothing of a $C^1$ periodic domain.}
\label{fig:C1domain}
\end{figure}

\begin{lem}\label{lem:propOmegah} 
Let $\Omega$ and $\Omega_h$ be defined as in Figure \ref{fig:C1domain}. We have
\begin{equation}\label{eq:omh}
      d_h:=d_{\mathcal{H}}(\Omega_h,\Omega) \asymp h^{3/2}, \qquad \mathrm{reach}(\partial\Omega_h)\asymp h^{1/2}.
\end{equation}
    
\end{lem}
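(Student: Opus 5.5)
The plan is to treat the two asymptotics in \eqref{eq:omh} separately. Everything happens in the chart $U=(-2\delta,2\delta)^2$, since $\Omega_h$ and $\Omega$ coincide outside $\{|x_1|<\delta\}$. Write $g(x_1)=(x_1^2+h^2)^{3/4}$, so that $f_h=f+\chi\,(g-f)$ with $f(x_1)=|x_1|^{3/2}$.

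\emph{Hausdorff distance.} Since $t\mapsto t^{3/4}$ is subadditive and increasing, we have
\[
0\le g(x_1)-|x_1|^{3/2}\le h^{3/2}.
\]
Hence $f\le f_h\le f+h^{3/2}$ on $(-\delta,\delta)$, and therefore $\Omega_h\subset\Omega$. For the upper bound $d_h\le h^{3/2}$, take a point of $\overline\Omega\setminus\overline\Omega_h$: it lies vertically between the two graphs, so moving it straight up by at most $h^{3/2}$ puts it in $\overline\Omega_h$. For the lower bound, consider $z_0=(0,0)\in\partial\Omega$. A point $(x_1,x_2)\in\overline\Omega_h$ with $|x_1|<\delta$ satisfies $x_2\ge f_h(x_1)\ge h^{3/2}$ when $|x_1|$ is small, because $\chi=1$ near $0$. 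So its distance to $z_0$ is at least $\min\{h^{3/2},c\}$, which gives $d_h\ge h^{3/2}$ for small $h$. Thus in fact $d_h=h^{3/2}$.

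\emph{Upper bound on the reach.} By \eqref{eq:reachS}, $\mathrm{reach}(\partial\Omega_h)\le 1/K_{\partial\Omega_h}$. At $x_1=0$ we have $f_h=g$ near $0$, $g'(0)=0$ and $g''(0)=\tfrac32 h^{-1/2}$. So the curvature at $(0,h^{3/2})$ equals $\tfrac32 h^{-1/2}$, which gives $\mathrm{reach}(\partial\Omega_h)\le \tfrac23 h^{1/2}$.

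\emph{Lower bound on the reach.} I would invoke Lemma~\ref{lem:approxSigma}, which gives $\mathrm{reach}(\Sigma_h)\ge c^*/(1+K_{\Sigma_h})$ with $c^*$ depending only on $\Sigma$. Two things must be checked.
\begin{enumerate}[(a)]
\item \emph{The setting of that lemma applies.} Take as defining functions $g_h=f_h(x_1)-x_2$ in the chart, patched with the same smooth defining function elsewhere, and $g=f(x_1)-x_2$. We need $g_h\to g$ in $C^1(\T^2)$. This reduces to $f_h\to f$ in $C^1$. We have $\|g-f\|_\infty\le h^{3/2}$, and
\[
g'(x_1)-f'(x_1)=\tfrac32\big(x_1(x_1^2+h^2)^{-1/4}-\mathrm{sgn}(x_1)|x_1|^{1/2}\big)\to0
\]
uniformly, while the terms involving $\chi'$ are $O(h^{3/2})$.
\item \emph{The curvature bound $K_{\Sigma_h}\lesssim h^{-1/2}$.} Since $|f_h'|$ is bounded, it suffices to bound $|f_h''|$. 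Here $|g''|\le \tfrac32(x_1^2+h^2)^{-1/4}+\tfrac34 x_1^2(x_1^2+h^2)^{-5/4}\le C h^{-1/2}$. The cutoff contributions $\chi''(g-f)$ and $2\chi'(g-f)'$ live on $\operatorname{supp}\chi'\subset\{|x_1|\ge c\}$, where $f$ is smooth and $g-f$ is $O(h^2)$ in $C^2$. The term $(1-\chi)f''$ is bounded because $1-\chi$ vanishes near $0$. Away from the chart the boundary is fixed and smooth.
\end{enumerate}
Together these give $\mathrm{reach}(\partial\Omega_h)\ge c^*/(1+Ch^{-1/2})\gtrsim h^{1/2}$.

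The main obstacle is point (a): confirming that $\Omega_h$ fits the framework of \eqref{eq:ghapprox}, so that the uniform constants $r_0,C_0,c^*$ of Lemmas~\ref{lem:S1}--\ref{lem:approxSigma} apply. This needs the uniform $C^1$ convergence $f_h\to f$ and a global defining function that is simultaneously $C^2$ for each $h$. If patching becomes delicate, I would instead prove injectivity of the normal map $\Phi_h$ directly in the chart, repeating the argument of Lemma~\ref{lem:approxSigma} with the graph parametrization.
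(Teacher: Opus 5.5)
Your proposal is correct and follows the paper's proof step for step. The Hausdorff distance comes from the vertical graph displacement, which is at most $h^{3/2}$ and is attained at $z_0$. The upper bound on the reach comes from \eqref{eq:reachS} together with $f_h''(0)=\tfrac32 h^{-1/2}$. The lower bound comes from Lemma~\ref{lem:approxSigma} with $K_{\Sigma_h}\le Ch^{-1/2}$, after checking that $f_h\to f$ in $C^1$.

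You supply the estimates the paper leaves implicit: subadditivity of $t\mapsto t^{3/4}$, the uniform $O(h^{1/2})$ bound on the derivative difference, and the cutoff terms in $f_h''$. Your argument also yields $d_h=h^{3/2}$ exactly for small $h$.
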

\begin{proof}
    
The domain $\Omega_h$ is of class $C^\infty$ and $\Omega_h \subset \Omega$. The vertical graph displacement is at most $h^{3/2}$ and equals $h^{3/2}$ at $x_1=0$. Moreover, the distance from $z_0$ to $\overline{\Omega}_h$ is $h^{3/2}$, so
\begin{equation*}
    d_h:=d_{\mathcal{H}}(\Omega_h,\Omega) \asymp h^{3/2}.
\end{equation*}
Near $0$, $f_h(x_1)=(x_1^2+h^2)^{3/4}$ and $ f_h'(0)=0$ and $f_h''(0)=\frac{3}{2}h^{-1/2}$.
We also have
\begin{equation}\label{eq:reach-h}
    c h^{-1/2}
    \leq \frac{1}{\operatorname{reach}(\Sigma_h)}
    \leq C h^{-1/2}.
\end{equation}
Indeed, we have the following estimates:
\begin{itemize}
    \item By \eqref{eq:reachS}, we have $\operatorname{reach}(\Sigma_h) \leq \frac{1}{|f_h''(0)|} =\frac23 h^{1/2}$.
    \item Conversely, since $f_h\to f$ in $C^1$ and the boundary is unchanged outside $U$, the construction in Lemma \ref{lem:gC1}, using $f_h(x_1)-x_2$ in $U$ and a fixed partition of unity, gives defining functions $g_h\in C^2(\T^2)$ satisfying \eqref{eq:ghapprox}. On the other hand, $\|f_h''\|_{L^\infty}\leq Ch^{-1/2}$ and the remaining boundary is fixed and smooth, so $K_{\Sigma_h}\leq Ch^{-1/2}$. By Lemma \ref{lem:approxSigma}, we obtain
    \begin{equation*}
        \operatorname{reach}(\Sigma_h)
        \geq\frac{c^*}{1+K_{\Sigma_h}}
        \geq\frac{c^*}{1+Ch^{-1/2}}
        \geq ch^{1/2},\qquad 0<h<h_0\leq1,
    \end{equation*}
    where $c=c^*/(1+C)>0$ is independent of $h$.
\end{itemize}
Therefore, the conclusion follows.
\end{proof}

\begin{example}\label{example:example-modulus} Let $\Omega$ and $\Omega_h$ be defined as in Figure \ref{fig:C1domain}. Let $\psi$ be a bounded, continuous, nondecreasing concave function on $[0,\infty)$ with $\psi(0)=0$.
Let $H$ be given as in \eqref{eq:classicalH} with 
\begin{equation*}
    f(x) = -\psi\big(\operatorname{dist}_{\mathbb{T}^2}(z,z_0)\big), \qquad x\in \T^2.
\end{equation*}
    For sufficiently small $r>0$, set $\psi(r)=\frac{1}{1 + |\log r|}$. Then
    \begin{equation}\label{eq:matching-modulus}
    \overline{H}_{\Omega}(0) - \overline{H}_{\Omega_h}(0)
    = \frac{1}{1+\frac{3}{2}|\log(h)|}.
    \end{equation}
    Thus, the estimate in Theorem~\ref{thm:quan} is attained in order for a logarithmic modulus of continuity.
\end{example}

\begin{proof}
This Hamiltonian is continuous and periodic in $z$, and convex and coercive in $q$, so it satisfies \ref{itm:A1}--\ref{itm:A2}. 
    By \eqref{eq:Hbar0}, since $z_0\in\partial\Omega$ and $\operatorname{dist}(z_0,\overline{\Omega}_h)=h^{3/2}$, we have $\overline{H}_{\Omega}(0)=0$ and $\overline{H}_{\Omega_h}(0)=-\psi(h^{3/2})$, thus \eqref{eq:matching-modulus} follows. This is exactly the theoretical rate obtained in Theorem \ref{thm:quan}. Indeed, for each fixed bounded momentum set, the modulus of continuity of $H$ satisfies
    \begin{equation}\label{eq:exampleMod}
        \omega_R(s)=\mathcal{O}\left(\frac{1}{1+|\log s|}\right) \quad\text{as}\; s\to 0^+
    \end{equation}
    and thus 
    \begin{equation*}
         0 \leq \overline{H}_{\Omega}(0) - \overline{H}_{\Omega_h}(0) \leq 
         \mathcal{O}\left(\frac{1}{1+|\log h|}\right)
    \end{equation*}
thanks to Theorem \ref{thm:quan}, Lemma \ref{lem:propOmegah}, and \eqref{eq:exampleMod}.
\end{proof}




\section*{Acknowledgments} 
The work of S. Tu is supported in part by an AMS-Simons travel grant 2026. S. Tu acknowledges the hospitality of VIASM and SGU, where this project was initiated in the summer of 2024, inspired by a question posed by Hung Tran during the Summer School on PDEs and Applications that led to Theorem~\ref{thm:infsupC1}, and thanks Hung Tran for helpful discussions on Lemma \ref{lem:costrucOmegah}. 
The authors acknowledge the use of ChatGPT 5.6 Sol solely for proofreading and assisting in the development of Example~\ref{example:example-modulus} in Section~\ref{sec:example}. 


\end{document}